\documentclass[12pt, a4paper]{amsart}
\usepackage[dvips]{epsfig}
\usepackage{amsmath}
\usepackage{amssymb}
\usepackage{amsfonts}
\usepackage{dsfont}
\usepackage{bbm}
\usepackage{amsthm}
\usepackage{amsbsy}
\usepackage{amsgen}
\usepackage{amscd}
\usepackage{amsopn}
\usepackage{amstext}
\usepackage{amsxtra}
\usepackage[utf8]{inputenc}
\usepackage{enumerate}
\usepackage{hyperref}
\usepackage{lineno}
\usepackage{marginnote}
\usepackage{verbatim}
\usepackage{calrsfs}
\usepackage{times, graphicx}
\usepackage{layout}
\usepackage{subcaption}
\usepackage{eso-pic}
\usepackage{lastpage}
\DeclareMathAlphabet{\pazocal}{OMS}{zplm}{m}{n}
\usepackage[foot]{amsaddr}

\setlength{\oddsidemargin}{-0.0in} \setlength{\textwidth}{6.7in}
\setlength{\topmargin}{-0.0in} \setlength{\textheight}{10.1in} \evensidemargin
\oddsidemargin
\parindent=8mm
\voffset=-1.2truecm
\hoffset=-0.6truecm

\numberwithin{equation}{section}

\makeatletter
\newcommand*\rel@kern[1]{\kern#1\dimexpr\macc@kerna}
\newcommand*\widebar[1]{%
  \begingroup
  \def\mathaccent##1##2{%
    \rel@kern{0.8}%
    \overline{\rel@kern{-0.8}\macc@nucleus\rel@kern{0.2}}%
    \rel@kern{-0.2}%
  }%
  \macc@depth\@ne
  \let\math@bgroup\@empty \let\math@egroup\macc@set@skewchar
  \mathsurround\z@ \frozen@everymath{\mathgroup\macc@group\relax}%
  \macc@set@skewchar\relax
  \let\mathaccentV\macc@nested@a
  \macc@nested@a\relax111{#1}%
  \endgroup
}
\makeatother


\newtheorem{theorem}{Theorem}[section]
\newtheorem{proposition}[theorem]{Proposition}
\newtheorem{lemma}[theorem]{Lemma}
\newtheorem{corollary}[theorem]{Corollary}

\theoremstyle{definition}
\newtheorem{definition}[theorem]{Definition}

\newtheorem{remark}[theorem]{Remark}

\newcommand{\vol}{\operatorname{Vol}}

\newcommand{\Dc}{\mathcal{D}}
\newcommand{\Dpc}{\pazocal{D}}
\newcommand{\Gc}{\mathcal{G}}

\newcommand{\Sc}{\pazocal{S}}

\newcommand{\Ec}{\pazocal{E}}
\newcommand{\tr}{\operatorname{tr}}

\newcommand{\Rc}{\mathcal{R}}

\newcommand{\Qc}{\pazocal{Q}}
\newcommand{\Mcc}{\mathcal{M}}
\newcommand{\Hc}{\mathcal{H}}

\newcommand{\meas}{\operatorname{meas}}

\newcommand {\E} {\mathbb{E}}
\newcommand {\M} {\pazocal{M}}
\newcommand {\R} {\mathbb{R}}

\newcommand {\Z} {\mathbb{Z}}

\newcommand {\Cc} {\pazocal{C}}
\newcommand {\Lc} {\mathcal{L}}

\newcommand {\C} {\mathbb{C}}

\newcommand {\Zc} {\mathcal{Z}}

\newcommand {\Ac} {\pazocal{A}}

\newcommand {\Ic} {\mathcal{I}}

\newcommand {\Tb} {\mathbb{T}}
\newcommand {\var} {\operatorname{Var}}
\newcommand {\len} {\operatorname{len}}

 \allowbreak

\begin{document}

\title[Boundary effect and spectral semi-correlations]{Boundary effect on the nodal length for Arithmetic Random Waves, and spectral semi-correlations}

\author{Valentina Cammarota\textsuperscript{1}}
\email{valentina.cammarota@uniroma1.it}
\address{\textsuperscript{1}Department of Statistics, Sapienza University of Rome}
\author{Oleksiy Klurman\textsuperscript{2}}
\email{lklurman@gmail.com}
\address{\textsuperscript{2}Department of Mathematics, KTH Royal Institute of Technology, Stockholm}
\author{Igor Wigman\textsuperscript{3}}
\email{igor.wigman@kcl.ac.uk}
\address{\textsuperscript{3}Department of Mathematics, King's College London}
\dedicatory{Dedicated to the memory of Jean Bourgain}

\date{\today}

\begin{abstract}

We test M. Berry's ansatz on nodal deficiency in presence of boundary.
The square billiard is studied, where the high spectral degeneracies allow for the introduction of a Gaussian ensemble of
random Laplace eigenfunctions (``boundary-adapted arithmetic random waves").
As a result of a precise asymptotic analysis, two terms in the asymptotic expansion
of the expected nodal length are derived, in the high energy limit along a generic sequence of energy levels.
It is found that the precise nodal deficiency or surplus of the nodal length
depends on arithmetic properties of the energy levels, in an explicit way.

To obtain the said results we apply the Kac-Rice method for computing the expected
nodal length of a Gaussian random field. Such an application uncovers major obstacles, e.g.
the occurrence of ``bad" subdomains, that, one hopes, contribute insignificantly to the nodal length.
Fortunately, we were able to reduce
this contribution to a number theoretic question of counting the ``spectral semi-correlations",
a concept joining the likes of ``spectral correlations" and ``spectral quasi-correlations" in having
impact on the nodal length for arithmetic dynamical systems.

This work rests on several breakthrough techniques of J. Bourgain, whose interest in the subject helped shaping it to high extent,
and whose fundamental work on spectral correlations, joint with E. Bombieri, has had a crucial impact
on the field.

\end{abstract}
	
\maketitle

\section{Introduction}

\subsection{Nodal length of Laplace eigenfunctions}

Let $f:\M\rightarrow\R$ be a smooth function on a compact smooth Riemannian surface $(\M,g)$,
with or without boundary, with no critical zeros. The zero set of
$f$, called the {\em nodal line} is a {\em smooth} curve with no self-intersections; it is an important qualitative descriptor of $f$.
We are interested in the geometry of the nodal lines of Laplace eigenfunctions on $\M$, in the high energy limit, i.e. the solutions
$\{(\varphi_{j},\lambda_{j})\}_{j\ge 1}$ of the Schr\"{o}dinger equation
\begin{equation}
\label{eq:schrodinger}
\Delta \varphi_{j}+\lambda_{j}\varphi_{j}=0,
\end{equation}
satisfying either Dirichlet or Neumann boundary conditions,
where $\Delta=\operatorname{div}\circ \nabla$ is the Laplace-Beltrami (Laplacian) operator on $\M$, and $\lambda_{j}\ge 0$ are
the energy levels (or simply the energies). It is well-known
that the spectrum of $\Delta$ is purely discrete, i.e. there exists a complete orthonormal system $\{\varphi_{j}\}_{j\ge 1}$
(orthonormal basis),
spanning the whole of $L^{2}(\M)$, so that all the spectral multiplicities are finite, and $\lambda_{j}\rightarrow\infty$
as $j\rightarrow\infty$ being the high energy limit.

Much of the focus in the study of the nodal lines of Laplace eigenfunctions has been turned to the study of the
{\em nodal length}, i.e. the total length $\Lc(\varphi_{j})$ of the curve $\varphi_{j}^{-1}(0)$ on $\M$, as $j\rightarrow\infty$.
Yau's conjecture ~\cite{Yau} asserts that the nodal length is commensurable with $\sqrt{\lambda_{j}}$, i.e.
\begin{equation}
\label{eq:Yau conj}
c_{\M}\cdot \sqrt{\lambda_{j}} \le \Lc(\varphi_{j}) \le C_{\M}\cdot \sqrt{\lambda_{j}}
\end{equation}
for some positive constants $c_{\M},C_{\M}>0$. Yau's conjecture was proven ~\cite{Bruning,Bruning-Gromes,DF} for $\M$ analytic, and more recently
the optimal lower bound ~\cite{Log Lower} and polynomial upper bound ~\cite{Log Upper} were established for the more general, smooth, case
(see also ~\cite{Log Mal}).

\subsection{Nodal length for random fields}

\label{sec:Nod length RWM}

One way to obtain stronger (or more precise) results than \eqref{eq:Yau conj} is to study the nodal length $\Lc(f)$ of {\em random} functions $f$,
an approach that has been actively pursued, in particular in the recent few years. As a concrete direction of research within the indicated scope,
one may take a Gaussian random field $f:\R^{2}\rightarrow\R$ (or $f:\R^{d}\rightarrow\R$, $d\ge 2$)
and study the distribution of the nodal length $\Lc(f;R)$ of $f$ restricted to
$B(R)\subseteq \R^{2}$, the radius-$R$ centred ball, $R\rightarrow\infty$. For\footnote{From this point we will tacitly assume that
all the involved random fields are sufficiently smooth and are satisfying some non-degeneracy assumptions.}
$f:\R^{2}\rightarrow\R$ {\em stationary} a
straightforward application of the standard Kac-Rice formula yields a precise expression
\begin{equation}
\label{eq:exp len stat}
\E[\Lc(f;R)]= c\cdot \vol(B(R)),
\end{equation}
whereas a significantly heavier machinery involving perturbation theory (and asymptotic analysis of the $2$-point correlation
function) yields an asymptotic expression for the variance
$$\var(\Lc(f;R)),$$ as $R\rightarrow\infty$. One may go further by applying the Wiener chaos decomposition on $\Lc(f;R)$ to
obtain ~\cite{KratzLeon} a limit law for the distribution of $$\frac{\Lc(f;R)}{\sqrt{\var(\Lc(f;R))}}.$$
Alternatively to working with a fixed random field restricted to expanding balls, one may fix a compact surface $\M$, consider a Gaussian ensemble of random functions on $\M$, i.e. a sequence $f_{n}:\M\rightarrow\R$ of Gaussian random fields indexed by $\M$, and study the asymptotic distribution of the nodal length of $f_{n}$, that is the total length $\Lc(f_{n})$
of $f_{n}^{-1}(0)$, as $n\rightarrow \infty$; in some natural examples (to be discussed
below) $f_{n}$ possesses a natural scaling with $n$.

\vspace{2mm}

Berry ~\cite{Berry 1977} suggested that for $\M$ generic {\em chaotic},
there exists a (non-rigorous) {\em link} between the (deterministic) eigenfunctions
$\varphi_{j}$ as in \eqref{eq:schrodinger}, and the restriction of monochromatic isotropic random wave $g$ (a particular random field on $\R^{2}$ to be defined immediately below), to $B(R)$ with $R\approx \sqrt{\lambda_{j}}$; this vague relation, usually referred to
as ``Berry's Random Wave Model" (RWM), agreed in a wide community, is subject to many numerical tests with overwhelmingly positive outcomes.
In particular, the study of the nodal structures of $g$ restricted to $B(R)$ as $R\rightarrow\infty$
facilitates our understanding of the nodal structures of $\varphi_{j}$ in the high energy limit. Berry's monochromatic isotropic
random wave $g$ is uniquely
defined as the centred Gaussian random field on $\R^{2}$ with covariance function $$r_{g}(x-y)=r_{g}(x,y):= \E[g(x)\cdot g(y)]=J_{0}(|x-y|),$$ $x,y\in\R^{2}$,
whose Fourier transform on $\R^{2}$ is the arc length of the unit circle (meaning that the monochromatic waves are propagating uniformly in all directions).
Since $r_{g}$ depends only on the Euclidean distance $|x-y|$, the law of $g$ is invariant under
all translations $g(\cdot)\mapsto g(\cdot+z)$, $z\in\R^{2}$, and rotations $g(\cdot)\mapsto g(o\,\cdot)$, $o\in O(2)$ (i.e. $g$ is stationary isotropic); it has applications in the study of ocean waves propagating ~\cite{LH1,LH2}.

Consistent to the above \eqref{eq:exp len stat},
the expected nodal length for this stationary model is easily found to be
\begin{equation}
\label{eq:exp len RWM}
\E[\Lc(g;R)] = c_{0}\cdot \vol(B(R)),
\end{equation}
with $c_{0}>0$ explicitly evaluated, via a straightforward application of the Kac-Rice formula.
Berry ~\cite[Formula (28)]{Berry 2002} further found that the variance is {\em logarithmic}, i.e. satisfying the asymptotic law
\begin{equation}
\label{eq:Berry var log}
\var(\Lc(g;R))= c_{1}\cdot R^{2}\log{R}+O_{R\rightarrow\infty}(R^{2}),
\end{equation}
much smaller than one would expect, due to an ``obscure cancellation" (see also ~\cite{wig}).

\vspace{2mm}

The central objective of this manuscript is investigating the effect of nontrivial boundary on the nodal structures
of Laplace eigenfunctions, first and foremost on the nodal length, either in the vicinity of the boundary, or globally.
Berry argued that, since the nodal line is perpendicular to the boundary ~\cite{Cheng} (except for intersection points
with higher degree vanishing),
its presence should impact its length negatively compared to \eqref{eq:exp len RWM}, he referred to as {\em ``nodal deficiency"}.
He backed this ansatz by a precise evaluation of the secondary term around the boundary for the ``boundary-adapted random waves", a Gaussian random field constrained to satisfy the boundary conditions, Dirichlet or Neumann, on an infinite straight line.

It was concluded that, bearing in mind that the primary term in his asymptotic expansion of nodal length for this boundary-adapted case is consistent to \eqref{eq:exp len RWM}, whereas the secondary term was, a large number of wavelengths away from the boundary, negative (identical between Dirichlet and Neumann boundary conditions), with total contribution in absolute value larger
than the length fluctuations in \eqref{eq:Berry var log} ({\em possibly}
extending to the boundary-adapted case), based on one sample only,
one should be able to detect the deficiency of the total nodal length of Laplace eigenfunctions for surfaces with boundary compared to the boundary-less case. Gnutzmann and Lois ~\cite{GL} supported Berry's deficiency ansatz by
performing a mean nodal volume calculation
for $\M$ cuboid of arbitrarily high dimension, a dynamical system with separation of variables, while averaging w.r.t. energy levels (rather than w.r.t. a Gaussian ensemble).

\subsection{Arithmetic Random Waves}
\label{sec:KKW ARW}

The ``usual" Arithmetic Random Waves are random toral Laplace eigenfunctions. Let
\begin{equation}
\label{eq:S=2 squares}
S=\{a^{2}+b^{2}:a,b\in\Z\}
\end{equation}
be the set of all integers expressible as sum of two squares, $n\in S$, and
\begin{equation}
\label{eq:Ec lattice pnts}
\Ec_{n}=\{ \mu=(\mu_{1},\mu_{2})\in\Z^{2}:\mu_{1}^{2}+\mu_{2}^{2}=n \}
\end{equation}
the lattice points set lying on the centred
radius-$\sqrt{n}$ circle.
It is well-known that $n\in S$, if and only if the prime decomposition of $n$ is of the form
\begin{equation}
\label{eq:n in S factor primes}
n=2^{a}\cdot \prod\limits_{j=1}^{s}p_{j}^{e_{j}} \prod\limits_{k=1}^{r}q_{k}^{2h_{k}},
\end{equation}
for some nonnegative integers $a$, $\{e_{j}\}_{j\le s}$, $\{h_{k}\}_{k\le r}$, and $p_{j}\equiv 1\mod 4$
and $q\equiv 3 \mod 4$ are primes.

By a classical result due to E. Landau ~\cite{Landau} the sequence $S\subseteq \Z$ is thin, i.e.
of asymptotic density $0$, and, what is stronger,
\begin{equation*}
\frac{|\{n\le X:\:n\in S\}|}{X} \: {{\underset{X\to\infty}\sim}} \: c_{0}\cdot \frac{X}{\sqrt{\log{X}}},
\end{equation*}
with some semi-explicit constant $c_{0}>0$.
Any function $g_{n}:\Tb^{2}\rightarrow\R$ on the torus $\Tb^{2}=\R^{2}/\Z^{2}$ of the
form
\begin{equation}
\label{eq:fn eigfunc tor ARW}
g_{n}(x)= \frac{1}{\sqrt{N_{n}}}\sum\limits_{\mu\in\Ec_{n}}a_{\mu}e(\langle x,\mu \rangle),
\end{equation}
where $a_{\mu}\in\C$ are some constants satisfying the condition
\begin{equation}
\label{eq:a(-mu)=conj(a(mu))}
a_{-\mu}=\overline{a_{\mu}}
\end{equation}
and $N_{n}=|\Ec_{n}|$
is the size of the lattice points set $\Ec_{n}$ (equivalently, $N_{n}=r_{2}(n)$, the number of ways to express $n$ as a sum of two squares),
is a real-valued Laplace eigenfunction with eigenvalue
\begin{equation}
\label{eq:lambda'n def}
\lambda'_{n}=4\pi^{2}n.
\end{equation}
The convenience pre-factor $\frac{1}{\sqrt{N_{n}}}$ on the r.h.s. of \eqref{eq:fn eigfunc tor ARW} has no bearing on the nodal set of $g_{n}$, and will be understood below. Conversely, any real-valued Laplace eigenfunction on $\Tb^{2}$ is necessarily of the form \eqref{eq:fn eigfunc tor ARW} for some $n\in S$.

\vspace{2mm}

The linear space of functions \eqref{eq:fn eigfunc tor ARW} may be endowed with a probability measure by making the coefficients $\{a_{\mu}\}_{\mu\in\Ec_{n}}$ i.i.d. standard complex valued Gaussian random variables, save for the condition \eqref{eq:a(-mu)=conj(a(mu))} to ensure the $g_{n}$ are real-valued; this model is called ``Arithmetic Random Waves".
Alternatively and equivalently, Arithmetic Random Waves is the Gaussian ensemble of centred stationary random fields with the covariance functions
\begin{equation}
\label{eq:pn covar ARW def}
p_{n}(x)=\E[g_{n}(x)\cdot g_{n}(0)] =\frac{1}{N_{n}}\sum\limits_{\mu\in\Ec_{n}} \cos(2\pi\langle x,\mu\rangle);
\end{equation}
its (random) nodal length $\Zc_{n}=\Lc(g_{n})$ on $\Tb^{2}$ is our etalon, representing the boundary-less cases for comparison against the appearance of nontrivial boundary. As it is the case with stationary random fields, it is easy to evaluate its expected nodal length to be
\begin{equation}
\label{eq:exp nodal len torus}
\E[\Zc_{n}]=\frac{\sqrt{\lambda'_{n}}}{2\sqrt{2}\sqrt{n}}=\frac{\pi}{\sqrt{2}}\cdot \sqrt{n}.
\end{equation}
Rudnick and Wigman ~\cite{RW08} gave the useful upper bound
\begin{equation}
\label{eq:var(Lc)<<1/sqrt(N) RW}
\var(\Zc_{n}) \ll_{N_{n}\rightarrow\infty} \frac{n}{\sqrt{N_{n}}},
\end{equation}
showing, in particular, that the distribution of $\frac{\Zc_{n}}{\sqrt{n}}$ concentrates around the constant $\frac{\pi}{\sqrt{2}}$.

Krishnapur-Kurlberg-Wigman ~\cite{KKW} further resolved the question of the true asymptotic behaviour of the variance on the l.h.s. of
\eqref{eq:var(Lc)<<1/sqrt(N) RW}, requiring the following background in the two squares problem.
For every $n$ we define the atomic probability measure
\begin{equation}
\label{eq:nun discrete prob}
\nu_{n}=\frac{1}{N_{n}}\sum\limits_{\mu\in\Ec_{n}}\delta_{\mu/\sqrt{n}}
\end{equation}
on the unit circle $\Sc^{1}$, supported on the angles of $\Sc^{1}$ corresponding to points of $\Ec_{n}$. It is known that
for a ``generic" sequence $\{n\}\subseteq S$ the angles $\{\mu/\sqrt{n}\}_{\mu\in\Ec_{n}}$ equidistribute on $\Sc^{1}$,
i.e. there exists a relative density $1$ sequence $\{n\}\subseteq S$, so that
\begin{equation}
\label{eq:nun=>dt/2pi}
\nu_{n}\Rightarrow \frac{d\theta}{2\pi},
\end{equation}
with $`\Rightarrow'$ standing for the weak-$*$ convergence of probability measures on $\Sc^{1}$, and in particular $N_{n}\rightarrow\infty$.
However, there exist ~\cite{Cil,KKW,KuWi} other {\em attainable} measures, i.e. weak-$*$ partial limits of the
sequence $\{\nu_{n}\}_{n\in S}$, and even under the (generic) constraint $N_{n}\rightarrow\infty$, the accumulation set of the sequence
$\{\widehat{\nu_n}(4)\}$ of the $4$th Fourier coefficients of $\nu_{n}$ (being the first nontrivial Fourier coefficient)
is the whole of $[-1,1]$. The said work
~\cite{KKW} established the {\em precise} asymptotic relation
\begin{equation}
\label{eq:var asymp ARW KKW}
\var(\Zc_{n})\, {{\underset{N_{n}\to\infty}\sim}}\, \frac{\pi^{2}}{128} \left(1+\widehat{\nu_{n}}(4)^{2}\right)\cdot  \frac{n}{N_{n}^{2}},
\end{equation}
and, bearing in mind that $1+\widehat{\nu_{n}}(4)^{2}$ is bounded away from both $0$ and infinity, in particular,
it shows that the fluctuations around the mean of $\Zc_{n}$ are of the order of magnitude
\begin{equation}
\label{eq:ARW fluct sqrt(n)/N}
\Zc_{n}-\E[\Zc_{n}]\approx \frac{\sqrt{n}}{N_{n}},
\end{equation}
important below, just like \eqref{eq:Berry var log}, due to an unexpected cancellation (``arithmetic Berry's cancellation").
Later a non-universal limit theorem for $\Zc_{n}$ was obtained ~\cite{MRW}.

\subsection{Boundary-adapted Arithmetic Random Waves}

The boundary-adapted Arithmetic Random Waves are random Laplace eigenfunctions on the unit square $\Qc=[0,1]^{2}$,
subject to Dirichlet boundary conditions. Let $S$ be the as above \eqref{eq:S=2 squares}, $n\in S$, and
$\mu=(\mu_{1},\mu_{2})\in \Ec_{n}$
a lattice point with $\Ec_{n}$ given by \eqref{eq:Ec lattice pnts}. Any function of the form $\Qc\rightarrow\R$
\begin{equation}
\label{eq:x->sin(l1x1)sin(l2x2)}
x=(x_{1},x_{2})\mapsto \sin(\pi \mu_{1}x_{1})\cdot \sin(\pi \mu_{2}x_{2})
\end{equation}
is a Laplace eigenfunction with eigenvalue
\begin{equation}
\label{eq:lambdan def}
\lambda_{n}=\pi^{2}n,
\end{equation}
satisfying the Dirichlet boundary conditions on $\Qc$ (cf. \eqref{eq:lambda'n def}).
However, given $\mu=(\mu_{1},\mu_{2})\in\Ec_{n}$ and
$\mu'=(\pm\mu_{1},\pm\mu_{2})\in\Ec_{n}$, the resulting maps as in \eqref{eq:x->sin(l1x1)sin(l2x2)}
differ at most by a sign. Therefore, to avoid redundancies, we introduce the equivalence relation on $\Ec$:
$(\mu_{1},\mu_{2})\sim (\mu_{1}',\mu_{2}')$ if $\mu_{1}=\pm \mu_{1}'$ and
$\mu_{2}=\pm\mu_{2}'$.

The general form of a Laplace eigenfunction on $\Qc$ satisfying Dirichlet boundary conditions assumes the form
\begin{equation}
\label{eq:BAARW fn def}
f_{n}(x) = \frac{4}{\sqrt{N_{n}}}\sum\limits_{\mu\in\Ec_{n}/\sim}a_{\mu}\sin(\pi\mu_{1}x_{1})\sin(\pi\mu_{2}x_{2}),
\end{equation}
for some $n\in S$, and we endow this linear space with a Gaussian probability measure by making the $\{a_{\mu}\}_{\mu\in\Ec_{n}/\sim}$
i.i.d. standard (real) Gaussian random variables. If either $\mu_{1}=0$ or $\mu_{2}=0$, then the corresponding
summand in \eqref{eq:BAARW fn def} vanish, so that we are allowed to assume that both $\mu_{1}\ne 0$ and $\mu_{2}\ne 0$.
We then call the random function \eqref{eq:BAARW fn def} equipped with the said Gaussian probability measure
``boundary-adapted Arithmetic Random Waves", much like the Arithmetic Random Waves \eqref{eq:fn eigfunc tor ARW}.
Alternatively (and equivalently), the boundary-adapted Arithmetic Random Waves is the ensemble of Gaussian centred random fields indexed by
$x\in \Qc$,
with covariance functions
\begin{equation}
\label{eq:rn covar def}
r_{n}(x,y)=\E[f_{n}(x)\cdot f_{n}(y)] = \frac{16}{N_{n}}\sum\limits_{\mu\in\Ec_{n}/\sim}\sin(\pi \mu_{1}x_{1})\sin(\pi\mu_{2}x_{2})
\sin(\pi \mu_{1}y_{1})\sin(\pi\mu_{2}y_{2}),
\end{equation}
$n\in S$, $x=(x_{1},x_{2}),y=(y_{1},y_{2})\in\Qc$. The $f_{n}$ are not stationary, even though, around generic points in $\Qc$, far away from the boundary,
$f_{n}$ asymptotically tends to stationarity ~\cite{Cann}, understood in suitable regime, after suitable re-scaling.
The main interest of this manuscript is the expected nodal length of $f_{n}$, and its comparison to \eqref{eq:exp nodal len torus}, from this point on tacitly assuming $N_{n}\rightarrow\infty$.

\begin{theorem}
\label{thm:main thm asymp exp}
Let $$\Lc_{n}=\Lc(f_{n})=\len(f_{n}^{-1}(0))$$ be the total nodal length of $f_{n}$ on $\Qc$, and recall the notation
\eqref{eq:nun discrete prob} and \eqref{eq:lambdan def}.
There exists a subsequence of energy levels $S'\subseteq S$ satisfying the following properties:

\begin{enumerate}[a.]

\item The sequence $S'$ is of relative asymptotic density $1$ within $S$.

\item The set of accumulation points of the sequence of numbers $\{\widehat{\nu_{n}}(4)\}_{n\in S'}$ is $[-1,1]$.

\item Along $n\in S'$ we have $N_{n}\rightarrow\infty$, and
\begin{equation}
\label{eq:exp main asympt}
\E[\Lc_{n}] = \frac{\sqrt{\lambda_{n}}}{2\sqrt{2}}\cdot \left( 1-   \frac{1+4\widehat{\nu_{n}}(4)}{16}\cdot \frac{1}{N_{n}} +
o_{N_{n}\rightarrow\infty}\left( \frac{1}{N_{n}} \right)    \right).
\end{equation}

\end{enumerate}

\end{theorem}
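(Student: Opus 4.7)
The plan is to apply the Kac-Rice formula to the Gaussian field $f_n$ on $\Qc$ and then extract a two-term asymptotic expansion of the resulting integrand. Since $f_n$ is a smooth centred Gaussian field whose $3\times 3$ covariance matrix of $(f_n(x),\p_1 f_n(x),\p_2 f_n(x))$ is non-degenerate at generic $x$, Kac-Rice gives
$$\E[\Lc_n] \;=\; \int_\Qc K_1(x)\,dx, \qquad K_1(x) \;=\; \frac{1}{\sqrt{2\pi\, r_n(x,x)}}\cdot \E\bigl[\,|\nabla f_n(x)|\,\big|\,f_n(x)=0\bigr].$$
Applying product-to-sum identities to \eqref{eq:rn covar def} and its $x$-derivatives, every entry of this covariance matrix decomposes as a constant spatial mean plus a genuinely oscillating trigonometric polynomial in $x$ whose Fourier modes are indexed by single points of $\Ec_n$. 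The conditional covariance matrix $\Omega(x)$ of $\nabla f_n(x)$ given $f_n(x)=0$ inherits the same structure.

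On the ``good'' part of $\Qc$ where $r_n(x,x)$ and $\det\Omega(x)$ stay well away from zero, the strategy is to Taylor expand $K_1(x)$ about its value at the constant spatial mean $\bar\Omega$ of $\Omega$. The conditional expectation of $|\nabla f_n(x)|$ is an explicit function of $\tr\Omega(x)$ and $\det\Omega(x)$ involving a complete elliptic integral of the second kind, and the expansion is in the perturbation $\Omega(x)-\bar\Omega$, of relative size $O(1/\sqrt{N_n})$. The zeroth-order term of the expansion integrates over $\Qc$ to reproduce the torus-style leading contribution $\sqrt{\lambda_n}/(2\sqrt 2)$, consistent with \eqref{eq:exp nodal len torus} at $\lambda_n=\pi^2 n$. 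First-order terms are pure trigonometric polynomials with no constant mode (the Fourier indices lie in $\Ec_n$ with $\mu_1,\mu_2\neq 0$), hence integrate to zero over $[0,1]^2$ by orthogonality. The $1/N_n$ correction in \eqref{eq:exp main asympt} is then expected to come from the second-order terms, where pairwise products of oscillating Fourier modes produce resonances forcing $\mu_i'=\pm\mu_i$ in each coordinate; the resulting diagonal sums may be rewritten, using the $90^\circ$-rotational symmetry of $\Ec_n$, in terms of the moments $\E_{\nu_n}[\cos^4\theta]$ and $\E_{\nu_n}[\cos^2\theta\sin^2\theta]$, each an explicit linear combination of $1$ and $\widehat{\nu_n}(4)$, from which the coefficient $(1+4\widehat{\nu_n}(4))/16$ should emerge after careful book-keeping.

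The main obstacle is controlling the ``bad'' subdomain of $\Qc$ where the Taylor expansion fails: a thin neighbourhood of the Dirichlet boundary $\p\Qc$, where $r_n(x,x)\to 0$, together with any interior points where $\det\Omega(x)$ is atypically small so that $\Omega$ is nearly rank-deficient. On such regions $K_1(x)$ can only be estimated crudely, so one needs a small-measure bound on the bad set; bounding the Lebesgue measure of the locus where a trigonometric polynomial with Fourier support in $\Ec_n$ is atypically small reduces, via $L^p$ estimates and Parseval, to counting solutions in $\Ec_n$ of short linear equations --- the spectral semi-correlations of the title. The generic subsequence $S'\subseteq S$ is then defined so that along $n\in S'$ the semi-correlation count admits a suitable power-saving upper bound; verifying that $S'$ has relative density one in $S$ and that $\{\widehat{\nu_n}(4)\}_{n\in S'}$ still accumulates on all of $[-1,1]$ is a delicate number-theoretic task, relying on the Bombieri--Bourgain techniques invoked in the abstract, and is where the main technical difficulties are expected to concentrate.
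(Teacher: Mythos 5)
Your overall architecture matches the paper's: Kac--Rice, a perturbative expansion of the zero density $K_1$ on a good set whose second-order resonances produce the $(1+4\widehat{\nu_n}(4))/(16N_n)$ correction, a measure bound for the bad set via $L^l$ moments reducing to semi-correlation counts, and a Bombieri--Bourgain-type argument for the density-one subsequence $S'$. However, there are two concrete gaps. The more serious one is your treatment of the singular set: you propose to combine a ``crude'' estimate of $K_1$ there with a small-measure bound, but no useful crude pointwise bound exists, precisely because on the bad set $v_n(x)=r_n(x,x)$ can be arbitrarily close to $0$ and the prefactor $1/\sqrt{v_n(x)}$ in $K_1$ blows up (near $\partial\Qc$ and, for certain $n$, along an interior grid where $f_n$ vanishes identically). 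The paper replaces the missing pointwise bound by an \emph{integrated} per-square estimate, $\int_Q K_1 = O(N_n^2/\sqrt n)$ for each square $Q$ of side $c_0/\sqrt n$ (Proposition \ref{prop:sing sqr bnd}), proved by a delicate analysis of the exact ratio $\E[X^2]/v_n(x)$ and a two-dimensional integral estimate (Lemma \ref{lem:bnd lose sum}); the paper even remarks that the deterministic Donnelly--Fefferman bound of $O(1)$ per square is insufficient by a wide margin. Without some substitute for this step your error term does not close, no matter how good the semi-correlation bound is.

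The second gap is in the Kac--Rice formula itself. For $n$ with $Q_n>1$ (see \eqref{eq:Qn grid numb def}) all frequencies $\mu_1,\mu_2$ are divisible by $Q_n$, the Gaussian distribution of $f_n(x)$ degenerates identically on the grid \eqref{eq:grid G(Qn)}, and that grid is a.s.\ contained in the nodal set; the correct identity is $\E[\Lc_n]=\int_\Qc K_1+2(Q_n-1)$, not $\E[\Lc_n]=\int_\Qc K_1$. The discrepancy is ultimately harmless (the paper shows $Q_n=O(\sqrt n/N_n^A)$ for every $A$), but it must be identified and absorbed, which requires the excision-plus-monotone-convergence argument of \S\ref{sec:Kac-Rice proof}. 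A smaller book-keeping caveat: the first-order term $\tr\Gamma_n(x)$ is \emph{not} a mean-zero trigonometric polynomial --- it contains the ratio $d_{i;n}(x)^2/v_n(x)$, whose spatial mean is itself of order $1/N_n$ and contributes $-6/N_n$ to the correction (Lemma \ref{21:33 Lon}); your scheme of expanding about the spatial mean $\bar\Omega$ can accommodate this, but only if you recognise that the ``zeroth-order'' term then already carries part of the $1/N_n$ correction rather than reproducing $\sqrt{\lambda_n}/(2\sqrt2)$ exactly.
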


The asymptotics \eqref{eq:exp main asympt} is expressed in terms of $\lambda_{n}$ rather than in terms of $n$ in a way that the leading
term on the r.h.s. of \eqref{eq:exp main asympt} agrees with \eqref{eq:exp nodal len torus} explicitly, for there is a discrepancy factor of $2$ otherwise, due to the discrepancy between \eqref{eq:lambda'n def} and \eqref{eq:lambdan def}. The boundary effect is then
encapsulated within the second, {\em correction}, term
\begin{equation}
\label{eq:2nd correction term}
\Cc_{n}:= - \frac{1+4\widehat{\nu_{n}}(4)}{16}\cdot \frac{1}{N_{n}} .
\end{equation}
On one hand the asymptotics \eqref{eq:exp main asympt} shows that, since, outside a thin set of $n\in S$, we have the convergence \eqref{eq:nun=>dt/2pi}
of $\nu_{n}$ to the uniform measure on $\Sc^{1}$, for such a sequence of $n$ the correction term is asymptotic to
\begin{equation*}
\Cc_{n}\sim -\frac{1}{16 N_{n}},
\end{equation*}
confirming Berry's ansatz on the nodal deficiency.
On the other hand, bearing in mind that, by property (b) of the sequence in Theorem \ref{thm:main thm asymp exp},
the accumulation set of $\{\widehat{\nu_{n}}(4)\}_{n\in S'}$ is the whole of $[-1,1]$,
without exclusions from $S'$, $$\Cc_{n}\cdot N_{n}$$ fluctuates infinitely in the asymmetric interval
\begin{equation*}
\Cc_{n}\cdot N_{n} \in \left[ -\frac{5}{16}, \frac{3}{16}  \right].
\end{equation*}

The maximal nodal deficiency (resp. maximal nodal surplus) in \eqref{eq:2nd correction term} is uniquely attained by the Cilleruelo measure
$$\frac{1}{4} \left( \delta_{\pm 1}+ \delta_{\pm i}\right)$$
(resp. its tilt by $\pi/4$), consistent with our interpretation of the amplified horizontal and vertical wave propagation
for Cilleruelo sequences (resp. their $\pi/4$-tilt for the tilted Cilleruelo), in light of Berry's rationale of nodal deficiency occurring
as a result of nodal lines perpendicular to the boundary. Finally, we notice that,
judging by the analogous quantity for the Arithmetic Random Waves \eqref{eq:ARW fluct sqrt(n)/N}, and applying M. Berry's reasoning, explained in \S\ref{sec:Nod length RWM}, we expect the fluctuations of $\Lc_{n}$ to be of the same order of magnitude $\approx \frac{\sqrt{n}}{N_{n}}$
as $\Cc_{n}$ (a by-product of the aforementioned ``miraculous" cancellation). That means that, unlike the situation in \cite{Berry 2002}, one cannot detect the nodal surplus or deficiency judging by the total nodal length based on one sample only.
However, this could be mended by taking more samples, or, likely, by restricting the sample to the vicinity of the boundary.

\vspace{2mm}

The main conclusion \eqref{eq:exp main asympt} of Theorem \ref{thm:main thm asymp exp} is valid for ``generic" $n\in S'\subseteq S$ only,
rather than for the whole sequence $n\in S$, though, importantly, this generic family is sufficiently rich so that to exhibit
a variety of different asymptotic biases of the correction term \eqref{eq:2nd correction term}.
Below Theorem \ref{thm:main thr expl semi-corr} will be stated, a version
of Theorem \ref{thm:main thm asymp exp} with an explicit control over the error term in
\eqref{eq:exp main asympt}, valid for the whole sequence $n\in S$ of energy level, expressed in
terms of the so-called ``spectral semi-correlations", defined in \S\ref{sec:semi-corr} (see Definition \ref{def:semi-corr}).
Our failure to unrestrict the statement of Theorem \ref{thm:main thm asymp exp} for the whole sequence $n\in S$ is then a
by-product of Theorem \ref{thm:semi-correlations Lyosha} below asserting a bound for the semi-correlations for a generic sequence
of energy levels. {\bf We do believe that \eqref{eq:exp main asympt} holds for $n\in S$, with no further restriction.}

\subsection{Spectral semi-correlations}
\label{sec:semi-corr}

Let
\begin{equation}
\label{eq:l=2k}
l=2k
\end{equation}
be an even number, with $k\ge 1$ an integer. The length-$l$ spectral correlation set ~\cite{KKW} is the set
\begin{equation}
\label{eq:prec corr def}
\Rc_{l}(n) = \left\{(\mu^{1},\ldots,\mu^{l})\in\Ec_{n}^{l}:\: \sum\limits_{j=1}^{l}\mu^{j}=0\right\}
\end{equation}
of all $l$-tuples of lattice points in $\Ec_{n}$ whose sum vanishes; by an elementary congruence obstruction modulo $2$,
for $l$ odd the corresponding correlation sets are all empty. The size of $\Rc_{l}(n)$ is directly related to the $l$-th moment
of the covariance function \eqref{eq:pn covar ARW def} corresponding to the Arithmetic Random Waves:
\begin{equation*}
\int\limits_{\Tb^{2}}p_{n}(x)^{l}dx = \frac{1}{N_{n}^{l}}|\Rc_{l}(n)|,
\end{equation*}
and bounding $|\Rc_{6}(n)|$ was a key ingredient for bounding the remainder while proving \eqref{eq:var asymp ARW KKW} in ~\cite{KKW}.
Since, for $k\ge 2$, fixing $\mu^{1},\ldots, \ldots,  \mu^{l-2}$ so that $$\sum\limits_{j=1}^{l-2}\mu^{j}\ne 0,$$
the relation $$\sum\limits_{j=1}^{l}\mu^{j}= 0$$
determines the remaining two lattice points $\mu^{l-1}$ and $\mu^{l}$ up to permutation, it is readily seen that for every $l\ge 4$,
$$|\Rc_{l}(n)|=O\left(N_{n}^{l-2}\right).$$

Let
\begin{equation*}
\Dc_{l}(n) = \left\{\pi (\mu^{1},-\mu^{1},\ldots, \mu^{k},-\mu^{k}) : \: \pi\in S_{l}\right\}
\end{equation*}
be the set of all $l$-tuples cancelling out in pairs, where $S_{l}$ is the symmetric group permuting the $l$-tuples,
of size
\begin{equation*}
|\Dc_{l}(n)| \sim c_{k}\cdot N_{n}^{k}.
\end{equation*}
Evidently, for every $l$ and $n\in S$, we have the inclusion
$$\Dc_{l}(n)\subseteq \Rc_{l}(n).$$ Hence, in particular
\begin{equation}
\label{eq:Corr>>N^k}
|\Rc_{l}(n)| \gg N_{n}^{k},
\end{equation}
recalling \eqref{eq:l=2k}. Bombieri and Bourgain ~\cite{BB}
showed that
\begin{equation}
\label{eq:BB length 6 uncond full}
|\Rc_{6}(n)| \ll N_{n}^{7/2},
\end{equation}
and established the striking inequality
\begin{equation*}
|\Rc_{6}(n)\setminus\Dc_{6}(n)| \ll N_{n}^{l/2-\gamma},
\end{equation*}
for some $\gamma>0$, valid for density-$1$ sequence of $n\in S$, or, alternatively, conditionally for the full sequence $S$,
so that, in particular, for these $n$, the optimal inequality
\begin{equation}
\label{eq:Corr<<N^k}
|\Rc_{l}(n)| \ll N_{n}^{k}
\end{equation}
holds (recall \eqref{eq:l=2k}).

\vspace{2mm}

In ~\cite{BMW}, the notion of spectral {\em quasi-correlations}, was instrumental for studying the analogue of $\Zc_{n}$ for the Arithmetic Random Waves \eqref{eq:fn eigfunc tor ARW}, restricted to domains decreasing with $n$ above Planck scale, e.g discs with radius $n^{-1/2+\delta}$
(``Shrinking balls").
For $l$ as above and $\epsilon>0$, a length-$l$ quasi correlation
is an $l$-tuple $(\mu^{1},\ldots,\mu^{l})$ of points in $\Ec_{n}$ so that $$0<\left\|\sum\limits_{j=1}^{l}\mu^{j}\right\| < n^{1/2-\epsilon}.$$
It was shown ~\cite[Theorem 1.4]{BMW} that for $n$ generic and $l$ arbitrary even number, the quasi-correlation set is {\em empty}.

\vspace{2mm}

In this manuscript we introduce a new concept, of {\em semi-correlations}, instrumental within the proof of
Theorem \ref{thm:main thm asymp exp}, as it will allow us to control the problematic {\em singular} set in $\Qc$, see
Corollary \ref{cor:sing contr << semi-corr}, and, we believe, of independent interest on its own right.

\begin{definition}[Semi-correlations]
\label{def:semi-corr}

For $l=2k$, $n\in S$, the length-$l$ semi-correlation set is the collection
\begin{equation}
\label{eq:M semi-corr def}
\Mcc_{l}(n) = \left\{(\mu^{1},\ldots \mu^{l})\in\Ec_{n}^{l}: \: \sum\limits_{j=1}^{l} \mu^{j}_{1} = 0 \right\}
\end{equation}
of all $l$-tuples of lattice points in $\Ec_{n}$ with the first coordinate summing up to $0$.
\end{definition}

From the above definition, it is evident that $$\Dc_{l}(n)\subseteq \Rc_{l}(n)\subseteq \Mcc_{l}(n),$$
so that, in particular, $$|\Mcc_{l}(n)|\gg N_{n}^{k},$$ cf. \eqref{eq:Corr>>N^k}. Quite remarkably, the following {\em optimal}
upper bound holds for the semi-correlation set size, albeit for a generic sequence only.

\begin{theorem}[Bound for the number of semi-correlations.]
\label{thm:semi-correlations Lyosha}
For every $l=2k\ge 4$ even integer, there exists a sequence $S'=S'(l)\subseteq S$ satisfying the following properties.

\begin{enumerate}[a.]

\item The sequence $S'\subseteq S$ is of relative asymptotic density $1$.

\item The set of accumulation points of the sequence of numbers $\{\widehat{\nu_{n}}(4)\}_{n\in S'}$ is
the whole of $[-1,1]$.

\item Along $n\in S'$ we have $N_{n}\rightarrow\infty$ and
\begin{equation}
\label{eq:semi-corr<N^k}
|\Mcc_{l}(n)| = O\left(N_{n}^{k}\right).
\end{equation}

\end{enumerate}

\end{theorem}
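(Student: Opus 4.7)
My plan is to encode the semi-correlation count as the $l$-th moment of a one-variable trigonometric polynomial, reduce it to an additive energy bound for the set of first coordinates $B_n := \{\Re z : z \in \Ec_n\} \subset \Z$, and adapt the Bombieri--Bourgain method for spectral correlations to the semi-correlation setting. Introducing
$$h_n(\alpha) = \sum_{z \in \Ec_n} e(\alpha \Re z), \qquad \alpha \in \Tb,$$
orthogonality gives $|\Mcc_l(n)| = \int_{\Tb} h_n(\alpha)^l \, d\alpha$, and for $n$ not a perfect square this equals $2^l$ times the $l$-wise additive energy of $B_n$, since each value $\mu_1 \in B_n$ lifts to exactly the two lattice points $(\mu_1, \pm \mu_2)$. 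The set $B_n$ is symmetric with $|B_n| = N_n/2$, and the pair-matching diagonal already yields the matching lower bound $|\Mcc_l(n)| \gg_l N_n^k$, so \eqref{eq:semi-corr<N^k} is the assertion that $B_n$ is \emph{near-Sidon}, with additive energy of the same order as the pair-matching count.

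The main task is to bound the non-diagonal contribution. I would rewrite the semi-correlation equation $\sum_j \Re z^j = 0$ in Gaussian form as $\sum_j (z^j + \bar z^j) = 0$, use the unique factorization $z = u \cdot (1+i)^a \prod_{p \equiv 1(4)} \pi_p^{t_p} \bar\pi_p^{e_p - t_p} \prod_{q \equiv 3(4)} q^{h_q}$ of each $z \in \Ec_n$ in $\Z[i]$ to reparameterize configurations by split exponents $(t_p)_{p \mid n}$ and unit labels, and then apply a Bombieri--Bourgain-style sieve exploiting the multiplicative independence of the arguments of the distinct Gaussian primes $\pi_p/|\pi_p|$ on the unit circle, together with the divisor-sum estimates of Bourgain alluded to in the introduction. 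The principal obstacle is that the semi-correlation is a \emph{single} real equation rather than the two real equations defining a full correlation, so a naive transcription of \cite{BB} loses one power of $N_n$ and yields only $|\Mcc_l(n)| \ll N_n^{k+1}$, one factor shy of the target. The gap must be closed by leveraging the rigid coupling $z^{j+l} = \bar z^j$: the doubled tuple $(z^1, \ldots, z^l, \bar z^1, \ldots, \bar z^l)$ lies inside $\Rc_{2l}(n)$ along a subvariety cut out by the conjugation involution, and careful bookkeeping of the pair-matching patterns on this subvariety should recover the missing factor. This is the technical crux where I expect most of the effort to concentrate; it also explains why the bound is restricted to a density-$1$ subset rather than the whole of $S$.

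Properties (a) and (b) of $S'$ are then secured by intersecting the density-$1$ set on which the above estimate holds with a density-$1$ subsequence of $S$ along which $\{\widehat{\nu_n}(4)\}$ already has accumulation set $[-1,1]$, a standard construction via \cite{Cil, KKW, KuWi}. The intersection remains density-$1$ in $S$, retains the full accumulation property (convergence to each $t \in [-1,1]$ is witnessed by infinite subsequences that cannot be absorbed entirely inside the density-zero discard), and by Landau's asymptotic satisfies $N_n \to \infty$ along it automatically.
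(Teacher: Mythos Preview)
Your proposal contains two genuine gaps, one in each of the main parts of the argument.

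\textbf{Part (b), the accumulation set.} Your intersection argument is incorrect. You claim that the subsequences witnessing $\widehat{\nu_n}(4)\to t$ ``cannot be absorbed entirely inside the density-zero discard'', but for the extreme values of $t$ this is precisely what can happen. The sequences along which $\widehat{\nu_n}(4)\to 1$ (Cilleruelo-type) or $\widehat{\nu_n}(4)\to -1$ are themselves of density zero in $S$, and there is no a priori reason why they should not lie entirely inside the density-zero exceptional set you throw away to obtain the semi-correlation bound. Intersecting two density-$1$ sets preserves density $1$, but it does \emph{not} preserve accumulation properties that are witnessed only along thin subsequences. The paper addresses this by an explicit construction (its Proposition~\ref{extremal}): for each $s\in[-1,1]$ it builds numbers $n=p_n^m p$ with $p_n,p$ chosen via the Kubilius equidistribution of Gaussian primes in sectors, and then verifies directly that for these particular $n$ the semi-correlation equation has only trivial solutions. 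This construction, not a soft intersection argument, is what secures part (b).

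\textbf{Part (c), the main bound.} Your route via the embedding $(z^1,\ldots,z^l)\mapsto(z^1,\ldots,z^l,\bar z^1,\ldots,\bar z^l)\in\Rc_{2l}(n)$ does not close the gap you identify. The Bombieri--Bourgain bound gives $|\Rc_{2l}(n)|\ll N_n^{l}$ for generic $n$, whereas you need $|\Mcc_l(n)|\ll N_n^{l/2}$; the conjugation constraint cuts the number of free lattice points from $2l$ to $l$, but you still have only one real equation, so the naive count remains $N_n^{l-1}$, and ``careful bookkeeping of pair-matching patterns'' is not a proof. The paper's mechanism is different and sharper: rather than bounding the number of non-trivial solutions, it shows that for density-$1$ many $n$ there are \emph{no} non-trivial solutions at all. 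Writing the semi-correlation as $\text{Re}(\pi_s A_{s-1})+\text{Re}(\bar\pi_s B_{s-1})=0$ in the largest Gaussian prime $\pi_s$, one finds that $\tan(\arg\pi_s)$ is determined uniquely by $A_{s-1},B_{s-1}$ unless both vanish, and this latter degeneracy is a semi-correlation of shorter length (handled inductively). Hence, given the smaller primes, at most $O_l(1)$ choices of $p_s$ can produce a non-trivial semi-correlation; summing over factorization types via the Bombieri--Bourgain anatomy lemma then shows the exceptional set of $n$ has density zero. This rigidity-of-angles argument is the key idea missing from your sketch.
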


By using a standard diagonal argument it is possible to choose a density $1$ sequence $S'\subseteq S$, satisfying \eqref{eq:semi-corr<N^k}
for {\em all} $l\ge 4$ even (with constant involved in the `O'-notation depending on $l$).
Theorem \ref{thm:semi-correlations Lyosha} is stronger compared to the upper bound \eqref{eq:Corr<<N^k}
for the spectral correlations,
due to Bombieri-Bourgain, also valid for a density one sequence of $n\in S$. In addition to claiming the
upper bound for the semi-correlations rather, significantly weaker than correlation, at also asserts the richness of the postulated
sequence in terms of the angular distribution of $\Ec_{n}$, expressed in terms of the Fourier coefficients $\{\widehat{\nu_{n}}(4)\}_{n\in S'}$.
The following result is
a version of Theorem \ref{thm:main thm asymp exp}, with an explicit control over the error term in \eqref{eq:exp main asympt}, expressed in terms of
the spectral semi-correlations. After a significant amount of effort put into, {\bf we still do not know whether \eqref{eq:semi-corr<N^k} holds for all $l$ even, along $n\in S$ with no further restriction}, and believe this question to be of sufficiently high interest, both for applications of the type of Theorem \ref{thm:main thm asymp exp}, and intrinsic, to be addressed in the future.

\begin{theorem}[Explicit unrestricted version of Theorem \ref{thm:main thm asymp exp}]
\label{thm:main thr expl semi-corr}
For every $l\ge 4$ even we have
\begin{equation}
\label{eq:exp len lead semi-corr}
\E[\Lc_{n}] = \frac{\sqrt{\lambda_{n}}}{2\sqrt{2}}\cdot \left( 1-   \frac{1+4\widehat{\nu_{n}}(4)}{16}\cdot \frac{1}{N_{n}} +
O_{N_{n}\rightarrow\infty}\left( \frac{1}{N_{n}^{2}}+N_{n}^{2-l}| \Mcc_{l}(n)| \right)    \right).
\end{equation}

\end{theorem}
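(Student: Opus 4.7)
The plan is to apply the Kac-Rice formula, Taylor expand the resulting intensity to second order in the fluctuations of the covariance of $f_n$ around the stationary isotropic model, extract the $\widehat{\nu_n}(4)$-dependent correction term from the linear part, and estimate the quadratic remainder by a moment computation that reduces, via Fourier orthogonality on $\Qc$, to counts of semi-correlations. Concretely, I start by writing
$$\E[\Lc_n] = \int_\Qc K_1^{(n)}(x)\,dx,\qquad K_1^{(n)}(x) = \frac{1}{\sqrt{2\pi\, r_n(x,x)}}\,\E\bigl[\,\|\nabla f_n(x)\|\,\bigm|\,f_n(x)=0\,\bigr],$$
with conditional covariance $\Omega_n(x) := \cov(\nabla f_n(x)) - r_n(x,x)^{-1}\nabla_y r_n(x,x)\,(\nabla_y r_n(x,x))^{T}$. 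Differentiating \eqref{eq:rn covar def} and expanding $\sin^2 = (1-\cos)/2$ expresses $r_n(x,x)$, $\nabla_y r_n(x,x)$ and $\cov(\nabla f_n(x))$ as their stationary values ($1$, $0$, $\tfrac{\pi^2 n}{2}I$ respectively) plus explicit oscillatory remainders of size $O(N_n^{-1/2})$ in $L^2(\Qc)$ indexed by $\Ec_n$.

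On the ``good'' subset of $\Qc$ where $r_n(x,x)$ is bounded below and $\det \Omega_n(x)$ is bounded away from $0$, Taylor expansion of the Kac-Rice density in these fluctuations gives
$$K_1^{(n)}(x) = \frac{\sqrt{\lambda_n}}{2\sqrt 2}\bigl(1 + L_n(x) + Q_n(x)\bigr) + \text{higher order},$$
with $L_n$ linear and $Q_n$ quadratic in the fluctuations. Integrating $L_n$ over $\Qc$, all off-diagonal $\mu\neq\mu'$ Fourier modes $\cos(2\pi\mu_i x_i)\cos(2\pi\mu'_i x_i)$ die by orthogonality except for the diagonal $\mu=\mu'$ contribution; the bookkeeping shows the explicit constants pair the diagonal kernel $\mu_1^2-\mu_2^2$ against $\nu_n$, producing precisely $\Cc_n = -\tfrac{1+4\widehat{\nu_n}(4)}{16 N_n}$, as in \eqref{eq:2nd correction term}. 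Integrating $Q_n$ (a bilinear form in fluctuations of $r_n$ and of $\Omega_n$) over $\Qc$ produces, by a similar orthogonality reduction, a contribution of order $1/N_n^2$, accounting for the first error term in \eqref{eq:exp len lead semi-corr}.

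The remaining task is to control both (i) the measure of the ``singular'' set $\Sigma_n \subset \Qc$ where the Taylor expansion is invalid (i.e.\ where either $r_n(x,x)$ or $\det \Omega_n(x)$ is atypically small), and (ii) the contribution of $K_1^{(n)}$ restricted to $\Sigma_n$. Using Cauchy-Schwarz together with pointwise a priori bounds for $K_1^{(n)}$, both are dominated by a high-moment estimate
$$\meas(\Sigma_n) \ll \int_\Qc\Bigl(|r_n(x,x)-1| + \|\Omega_n(x)/(\pi^2 n/2) - I\|\Bigr)^{l}\,dx$$
for a suitable even $l = 2k \ge 4$. Expanding the integrand as a sum over $(\mu^1,\ldots,\mu^l)\in\Ec_n^l$ of products $\prod_j \cos(2\pi\mu^j_1 x_1)\cos(2\pi\mu^j_2 x_2)$, Fourier orthogonality in $x_1$ restricts the surviving tuples to $\sum_j \epsilon_j\mu^j_1 = 0$ for some sign pattern $\epsilon_j\in\{\pm 1\}$, i.e.\ (after the obvious sign-absorbing bijection) to $|\Mcc_l(n)|$-many tuples, and analogously in $x_2$; this is the content of Corollary \ref{cor:sing contr << semi-corr}. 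Combined with the $N_n^{-l}$ normalization inherent to $f_n$, this yields $\meas(\Sigma_n) \ll N_n^{-l}|\Mcc_l(n)|$ and ultimately a singular contribution of size $O(N_n^{2-l}|\Mcc_l(n)|)$, matching \eqref{eq:exp len lead semi-corr}.

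The main obstacle is the last step: one must identify the relevant Fourier-orthogonality configuration as \emph{semi-}correlations (cancellation in a single coordinate), rather than full correlations, because the degeneration of $\Omega_n$ is controlled by derivatives of $r_n$ in only one direction at a time. This both makes the bound sharp when $|\Mcc_l(n)|$ is of optimal size $N_n^k$ (available for the generic sequence of Theorem \ref{thm:semi-correlations Lyosha}) and transfers the obstruction to removing the density-$1$ restriction in Theorem \ref{thm:main thm asymp exp} entirely onto the arithmetic problem of bounding $|\Mcc_l(n)|$.
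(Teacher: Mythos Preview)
Your outline matches the paper's strategy closely: Kac-Rice, perturbative expansion of $K_1$ on a nonsingular set, Chebyshev/moment bound for the singular measure via semi-correlations, and then reassembly. Two points, however, are genuine gaps rather than compressions.

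First, your Kac-Rice identity $\E[\Lc_n]=\int_{\Qc}K_1^{(n)}$ is not literally true here. The variance $r_n(x,x)$ vanishes on a deterministic grid $\Gc_n$ (plus finitely many isolated points) whenever $Q_n>1$ in \eqref{eq:Qn grid numb def}, and that grid is contained in the nodal set almost surely. The paper's Proposition~\ref{prop:Kac Rice BAARW} shows that the correct identity carries an additive $2(Q_n-1)$, and one must then check separately (via the prime factorisation of $n$) that $Q_n\ll \sqrt{n}/N_n^{A}$ for every $A$, so that this term is absorbed by the error in \eqref{eq:exp len lead semi-corr}. This is not hard, but it cannot be skipped.

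Second, and more seriously, the phrase ``pointwise a priori bounds for $K_1^{(n)}$'' does not do the work you need. There is no uniform pointwise bound on $K_1$: it blows up like $v_n(x)^{-1/2}$ near the degenerate set, so multiplying $\meas(\Sigma_n)$ by a sup-bound fails. What the paper actually proves (Proposition~\ref{prop:sing sqr bnd}, via Lemma~\ref{lem:bnd lose sum}) is that on each small square of side $c_0/\sqrt{n}$ one has $\int_Q K_1\ll N_n^2/\sqrt{n}$, obtained by writing $\E[X^2]/v_n(x)$ as a ratio of double sums over $\eta,\mu\in\Ec_n/\sim$, symmetrising to expose a cancellation in the numerator, and then carrying out a delicate one-variable integration that handles the simultaneous vanishing of numerator and denominator. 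The $N_n^2$ comes from summing over the $O(N_n^2)$ pairs $(\eta,\mu)$ after this cancellation; it is precisely this loss that forces the $N_n^{2-l}$ in \eqref{eq:exp len lead semi-corr} and makes the semi-correlation bound with large $l$ necessary. Your sketch does not contain this mechanism.

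A minor remark: your attribution of the $-\tfrac{1+4\widehat{\nu_n}(4)}{16N_n}$ term to the part of the expansion ``linear in the fluctuations'' is misleading. The purely linear terms in $s_n$ integrate to zero; the $1/N_n$ correction (including the $\widehat{\nu_n}(4)$ dependence) arises from the \emph{quadratic} diagonal contributions such as $\int s_n^2$, $\int b_{11;n}^2$, together with the $d_{i;n}^2/v_n$ pieces hidden inside $\tr\Gamma_n$. This is what Lemma~\ref{21:33 Lon} computes.
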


Theorem \ref{thm:main thr expl semi-corr} implies Theorem \ref{thm:main thm asymp exp} at once, by working with the sequence
resulting from an application of Theorem \ref{thm:semi-correlations Lyosha} on $l=8$ (say), and from this point on we will only care to prove
Theorem \ref{thm:main thr expl semi-corr} (and Theorem \ref{thm:semi-correlations Lyosha}).

\subsection*{Acknowledgements}

The research leading to these results has received funding from the European Research Council under the European Union's Seventh Framework Programme (FP7/2007-2013), ERC grant agreement n$^{\text{o}}$ 335141 (V.C and I.W.).
We are grateful to Sven Gnutzmann for raising the question of nodal deficiency in arithmetic context as ours, and to Andrew Granville,
P\"{a}r Kurlberg, Domenico Marinucci, Ze\'{e}v Rudnick and Mikhail Sodin for the very inspiring and fruitful conversations concerning subjects relevant to this manuscript.

\section{Outline of the paper}

\subsection{Outline of the proof of Theorem \ref{thm:main thr expl semi-corr}}

\subsubsection{The Kac-Rice formula}

The Kac-Rice formula is a meta-theorem allowing one to evaluate the $(d-1)$-volume of the zero set of a random field $F:\R^{d}\rightarrow\R$,
for $F$ satisfying some smoothness and non-degeneracy conditions.
For $F:\R^{d}\rightarrow\R$, a sufficiently smooth centred Gaussian random field, we define
\begin{equation*}
K_{1}(x):= \frac{1}{(2\pi)^{1/2}\sqrt{\var(F(x))}}\cdot \E[|\nabla F(x)|\big| F(x)=0]
\end{equation*}
the zero density (first intensity) of $F$. Then the Kac-Rice formula asserts that for some suitable class of random fields $F$
and $\overline{\Dpc}\subseteq \R^{d}$ a compact closed subdomain of $\R^{d}$, one has the equality
\begin{equation}
\label{eq:Kac Rice meta}
\E[\vol_{d-1}(F^{-1}(0)\cap \overline{\Dpc})] = \int\limits_{\overline{\Dpc}}K_{1}(x)dx.
\end{equation}

We would like to apply \eqref{eq:Kac Rice meta} on the random fields $f_{n}$ in \eqref{eq:BAARW fn def} to evaluate the expectation on
the l.h.s. of \eqref{eq:exp main asympt}. Unfortunately, for some $n$, the aforementioned
non-degeneracy conditions fail decisively for some points of $\Qc$. For these cases, an {\em approximate} version of Kac-Rice was
developed ~\cite[Proposition 1.3]{RW2014} (in a slightly different context of evaluating the variance),
so that rather than holding precisely, \eqref{eq:Kac Rice meta} would hold {\em approximately}, still yielding
the asymptotic law for the evaluated expectation. Nevertheless, for this particular case, by excising some neighbourhoods of
the problematic {\em degenerate} set, consisting of a union of a {\em grid} and finitely many {\em isolated} points,
and by applying the Monotone Convergence Theorem, we will be able to deduce that
\eqref{eq:Kac Rice meta} holds {\em precisely}, save for the length of the said {\em deterministic} {\em grid}
contained in the nodal set of $f_{n}$ for some $n\in S$ of a particular form.

Let $n\in S$ be of the form \eqref{eq:n in S factor primes}, and denote the associated number
\begin{equation}
\label{eq:Qn grid numb def}
Q_{n} = 2^{\lfloor a/2\rfloor}\prod\limits_{k=1}^{r}q_{k}^{h_{k}},
\end{equation}
so that, in particular, $Q_{n}^{2} | n$. We will establish later (cf. Lemma \ref{lem:degenerate set grid+fin} below) that in such a scenario,
all of the $\mu_{1}$ and $\mu_{2}$ on the r.h.s. of \eqref{eq:BAARW fn def} are divisible by $Q_{n}$, so that if $Q_{n}>1$, then
necessarily the (deterministic) grid
\begin{equation}
\label{eq:grid G(Qn)}
\Gc_{n} = \Gc(Q_{n}) = \bigcup\limits_{k=1}^{Q-1}\left\{(x_{1},x_{2})\in\Qc:\: x_{1}= \frac{k}{Q_{n}} \right\}
\cup \bigcup\limits_{k=1}^{Q-1}\left\{(x_{1},x_{2})\in\Qc:\: x_{2}= \frac{k}{Q_{n}} \right\}
\subseteq \Qc
\end{equation}
is a.s. contained inside the nodal line $f_{n}^{-1}(0)$ (see figures \ref{fig:singular set}-\ref{fig:singular set2});
$\Gc_{n}$ is of length
\begin{equation}
\label{eq:len(grid)=2(Q-1)}
\len(\Gc_{n})=2\cdot (Q_{n}-1).
\end{equation}
Lemma \ref{lem:degenerate set grid+fin} will also assert that such a situation is only possible
in this scenario, i.e. all the components $\{\mu_{1}\}_{\mu\in\Ec_{n}/\sim}$ are divisible by a maximal number $d>1$, if and only
$d=Q_{n}$ in \eqref{eq:Qn grid numb def}.
The following proposition is the announced Kac-Rice formula, with the said caveat (namely, the length of $\Gc_{n}$,
manifested on the r.h.s. of \eqref{eq:BAARW Kac Rice}).

\begin{figure}[ht]
\centering
\includegraphics[height=50mm]{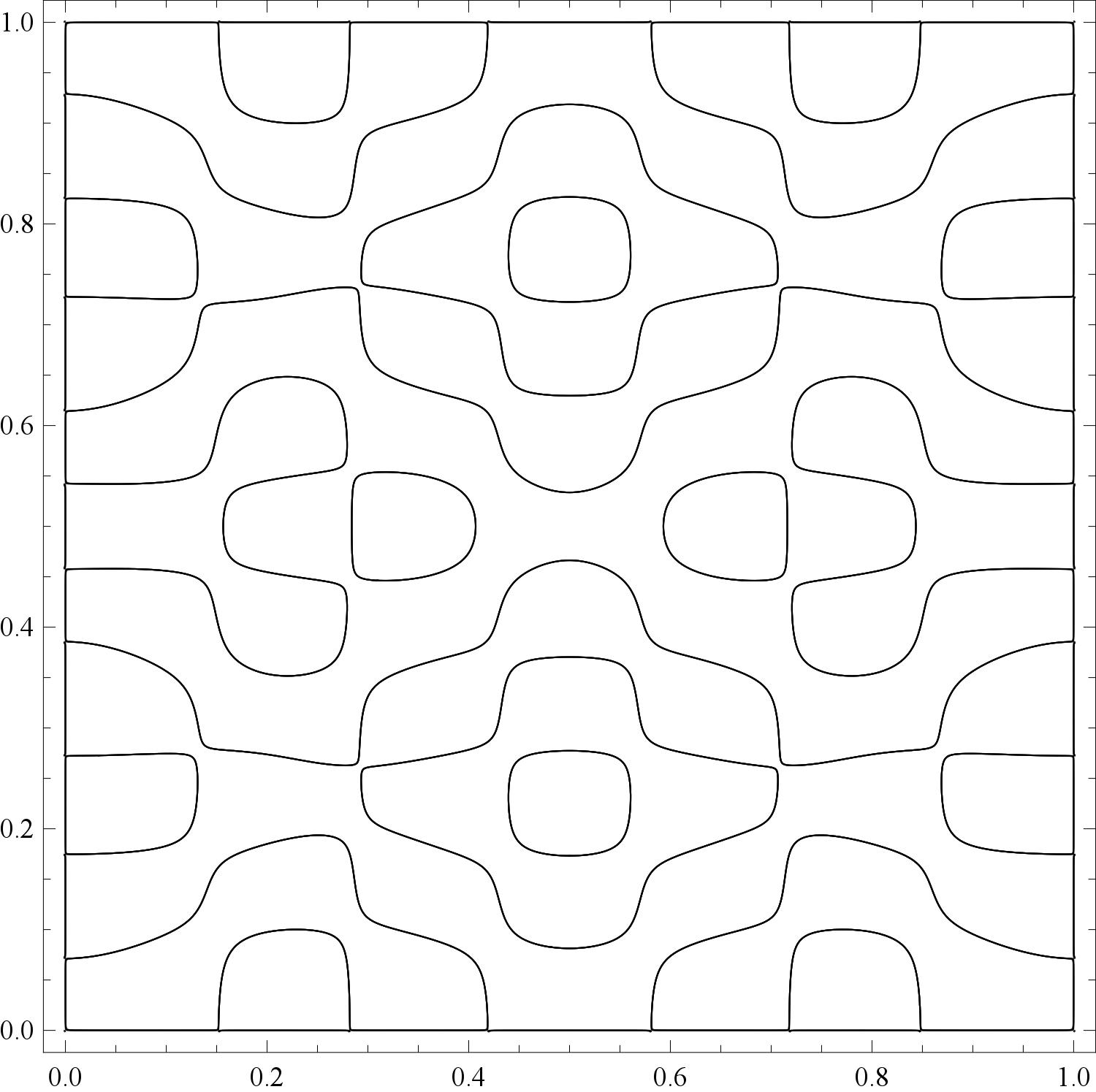}
\includegraphics[height=50mm]{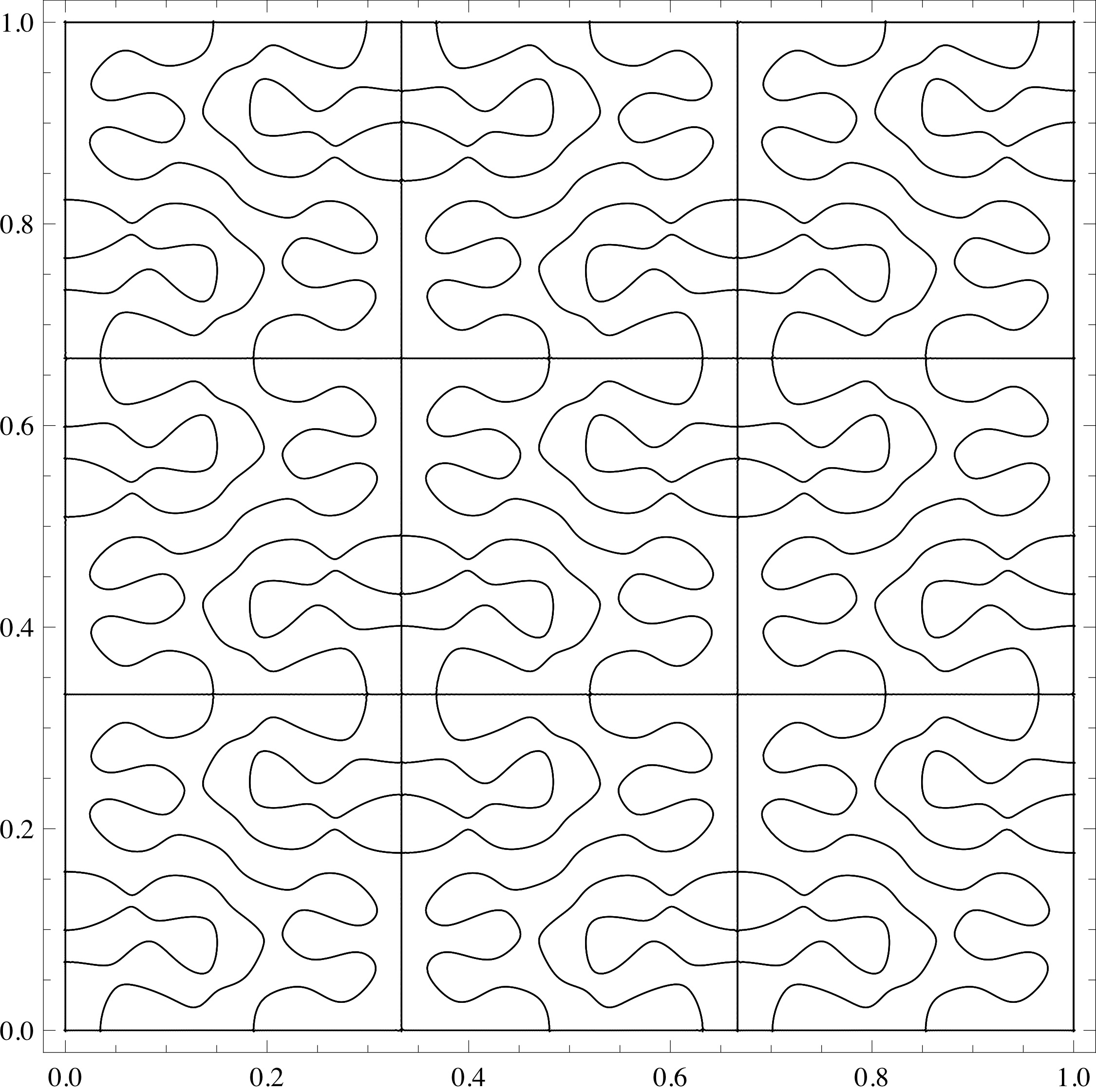}
\includegraphics[height=50mm]{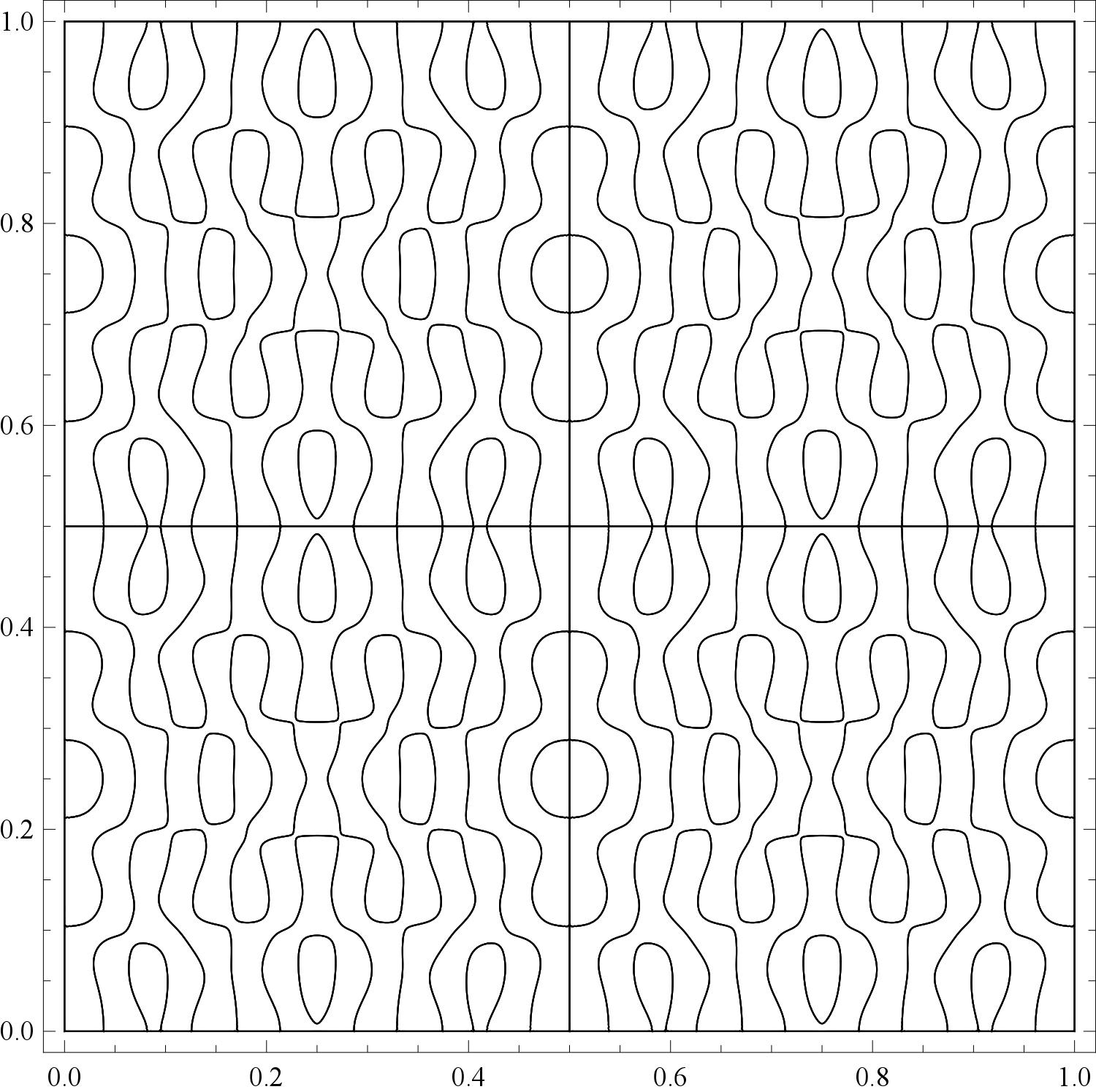}
\caption{Left: Nodal line for some $f_{n}$, $n=170$, not containing a grid ($Q_{n}=1$).
Middle: Same for some $f_{n}$, $n=765$. Here $Q_{n}=3$,
and thereupon its nodal line contains the grid $\Gc(1/3)$.
Right: Same for $n=1000$, $Q_{n}=2$.}
\label{fig:singular set}
\end{figure}

\begin{figure}[ht]
\centering
\includegraphics[height=75mm]{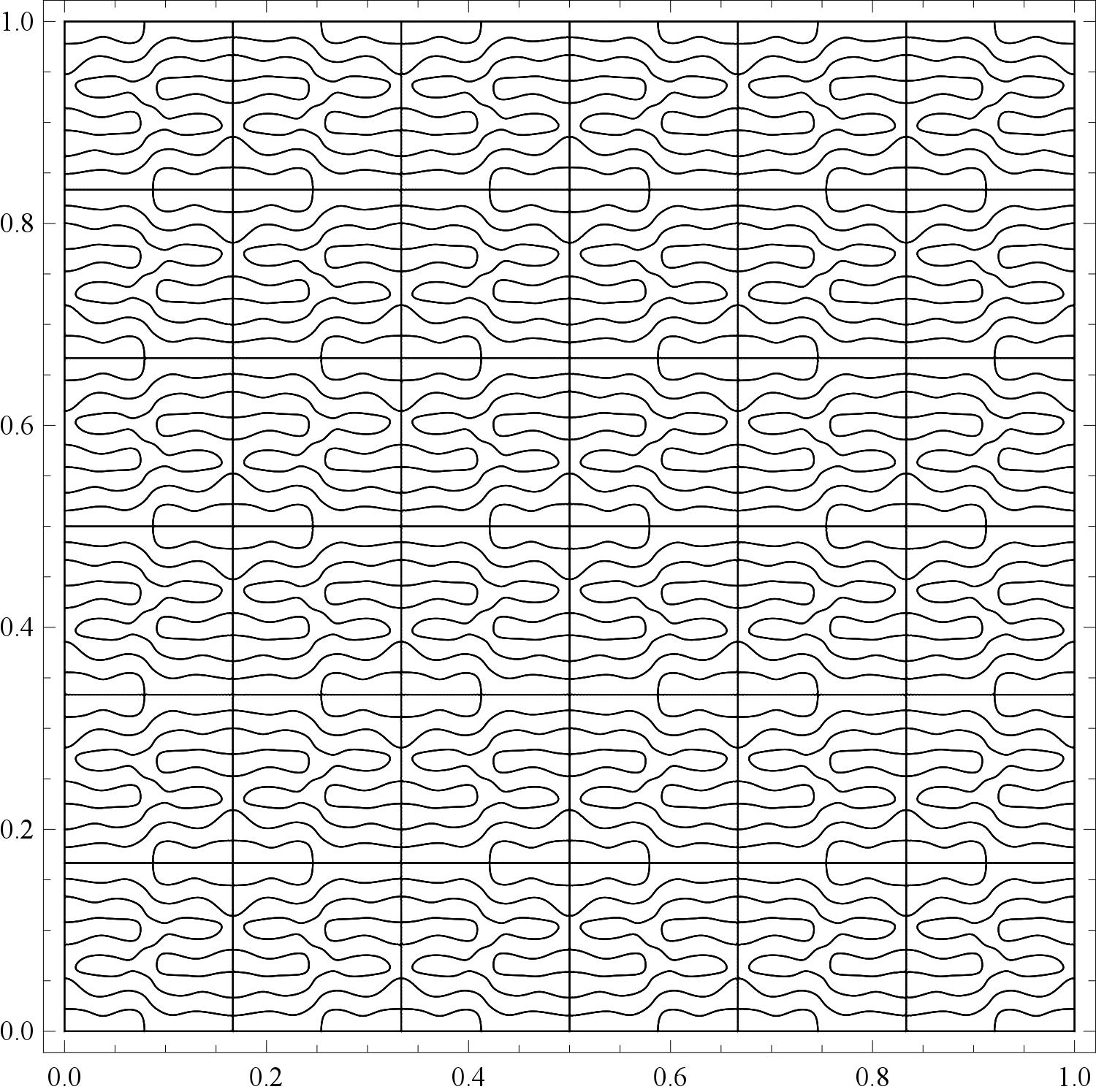}
\caption{Nodal line for some $f_{n}$, $n=3060$. Here $Q_{n}=6$,
and the nodal line of $f_{n}$ contains $\Gc(1/6)$.}
\label{fig:singular set2}
\end{figure}

\begin{proposition}
\label{prop:Kac Rice BAARW}
Let $f_{n}$ be as in \eqref{eq:BAARW fn def}, $\Lc_{n}$ the nodal length of $f_{n}$, and
\begin{equation}
\label{eq:K1 fn density def}
K_{1}(x)=K_{1;n}(x) = \frac{1}{(2\pi)^{1/2}\sqrt{\var(f_{n}(x))}} \cdot \E[|\nabla f_{n}(x)|\big| f_{n}(x)=0]
\end{equation}
be the zero density of $f_{n}$. Then, for every $n\in S$, we have $K_{1}\in L^{1}(\Qc)$, and moreover,
we have
\begin{equation}
\label{eq:BAARW Kac Rice}
\E[\Lc_{n}] = \int\limits_{\Qc}K_{1}(x)dx + 2(Q_{n}-1),
\end{equation}
where $Q_{n}$ is as in \eqref{eq:Qn grid numb def}.
\end{proposition}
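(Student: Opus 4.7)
The plan is to reduce to the standard Kac--Rice formula by excising the (deterministic) degenerate part of the nodal set and passing to a limit. First I would invoke Lemma \ref{lem:degenerate set grid+fin} (whose statement is announced earlier in this section): it characterizes precisely the set of $x\in\Qc$ where the Gaussian vector $(f_n(x),\nabla f_n(x))$ fails the non-degeneracy hypothesis of Kac--Rice, identifying it as the union of the boundary $\partial\Qc$, the grid $\Gc_n$ (non-empty iff $Q_n>1$), and a finite set $\Sc_n$ of isolated points. Away from this deterministic set, $f_n$ is a smooth Gaussian field with non-degenerate $(f_n(x),\nabla f_n(x))$, so the classical Kac--Rice formula applies on any compact subset.

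Concretely, I would fix an exhaustion $K_1\subseteq K_2\subseteq\cdots$ of the open set $\Dpc_n:=\Qc\setminus(\partial\Qc\cup\Gc_n\cup\Sc_n)$ by compact sets. On each $K_m$ the standard Kac--Rice formula gives
\begin{equation*}
\E\bigl[\len(f_n^{-1}(0)\cap K_m)\bigr]=\int_{K_m}K_1(x)\,dx,
\end{equation*}
and since both sides are non-negative and monotone non-decreasing in $m$, the Monotone Convergence Theorem yields the same identity with $K_m$ replaced by $\Dpc_n$. Because the complement $\Qc\setminus\Dpc_n$ has two-dimensional Lebesgue measure zero, the right-hand side equals $\int_\Qc K_1(x)\,dx$ as soon as one knows $K_1\in L^1(\Qc)$.

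To pass from $\len(f_n^{-1}(0)\cap\Dpc_n)$ to the full nodal length $\Lc_n$ I would add back the contributions of the excised set. By Lemma \ref{lem:degenerate set grid+fin}, the grid $\Gc_n$ is almost surely contained in $f_n^{-1}(0)$ and contributes its full length $\len(\Gc_n)=2(Q_n-1)$ from \eqref{eq:len(grid)=2(Q-1)}; the isolated set $\Sc_n$ has one-dimensional Hausdorff measure zero and contributes nothing; the deterministic boundary contribution is excluded from $\Lc_n$ by convention. Summing gives \eqref{eq:BAARW Kac Rice}.

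The principal technical obstacle is verifying $K_1\in L^1(\Qc)$ in the presence of the degeneracy along $\partial\Qc\cup\Gc_n$. Near, say, the boundary component $\{x_1=0\}$, the Taylor expansion $\sin(\pi\mu_1 x_1)\approx \pi\mu_1 x_1$ gives $f_n(x)\approx x_1\cdot c_1(x_2)$ and $\partial_1 f_n(x)\approx c_1(x_2)$, which are (to leading order) perfectly correlated, while $\partial_2 f_n(x)$ has size $O(x_1)$. Conditioning on $f_n(x)=0$ forces $c_1(x_2)$ to be small, which in turn suppresses $\partial_1 f_n(x)$; the leading contribution to $|\nabla f_n(x)|$ then comes from $\partial_2 f_n(x)$ of size $O(x_1)$. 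This factor $x_1$ in $\E[|\nabla f_n(x)|\mid f_n(x)=0]$ cancels the $1/x_1$ blow-up in $1/\sqrt{\var(f_n(x))}$, leaving $K_1(x)$ bounded up to the boundary, and the same Taylor argument handles each component of $\Gc_n$ via $Q_n\mid\mu_1$ (or $Q_n\mid\mu_2$). This cancellation, crucial for both the monotone convergence step and the $L^1$-integrability, should follow from a direct computation of $K_1$ once one diagonalizes the conditional covariance of $\nabla f_n(x)$ given $f_n(x)=0$ to isolate the surviving $O(x_1)$ component.
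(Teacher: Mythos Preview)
Your overall strategy---excise the degenerate set, apply Kac--Rice on the complement, and pass to the limit via Monotone Convergence, then add back the deterministic grid length---is exactly what the paper does. The only substantive difference is how you establish $K_{1}\in L^{1}(\Qc)$.

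You propose to verify integrability directly, by Taylor-expanding near the grid/boundary and exhibiting a cancellation between the $1/\sqrt{v_{n}(x)}$ blow-up and the conditional gradient size. That heuristic is plausible, but you have not actually carried it out, and making it rigorous (controlling the higher-order terms uniformly in $\mu\in\Ec_{n}$, and handling the isolated points of $\Ac_{n}$ where the local structure is more complicated than a single linear vanishing) is real work. The paper sidesteps all of this: once the MCT yields
\[
\E[\Lc_{n}]-2(Q_{n}-1)=\int_{\Qc}K_{1}(x)\,dx
\]
with both sides in $[0,\infty]$, one simply observes that the left-hand side is finite by the deterministic upper bound \eqref{eq:Yau conj} on the nodal length (Donnelly--Fefferman, valid here since the square is real-analytic). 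Hence the right-hand side is finite, i.e.\ $K_{1}\in L^{1}(\Qc)$, with no local analysis of $K_{1}$ required. This shortcut is worth noting: the $L^{1}$ claim is a \emph{consequence} of the identity plus an a priori bound on $\Lc_{n}$, not a prerequisite for it.

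Two small corrections: (i) Lemma~\ref{lem:degenerate set grid+fin} only describes where $\var(f_{n}(x))=0$, which is exactly the non-degeneracy hypothesis needed for \cite[Theorem 6.3]{AW}; you do not need non-degeneracy of the full vector $(f_{n},\nabla f_{n})$. (ii) You do not need $K_{1}\in L^{1}$ to identify $\int_{\Dpc_{n}}K_{1}=\int_{\Qc}K_{1}$; the complement has Lebesgue measure zero and $K_{1}\ge 0$, so the identity holds whether finite or not.
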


For example, by comparing \eqref{eq:BAARW Kac Rice} to \eqref{eq:Kac Rice meta}, we may deduce as a particular by-product of Proposition \ref{prop:Kac Rice BAARW}, that \eqref{eq:Kac Rice meta} holds precisely in our case, if and only if $Q_{n}=1$, i.e. the grid is empty.
Below it will be demonstrated that $Q_{n}$ on the r.h.s. of \eqref{eq:BAARW Kac Rice} does not contribute to the Kac-Rice integral
(nor to the correction term $\sqrt{n}\Cc_{n}$ in \eqref{eq:exp main asympt}, with $\Cc_{n}$ given by \eqref{eq:2nd correction term}),
e.g., it is easily dominated by $\frac{\sqrt{n}}{N_{n}^{A}}$, for every $A>0$ (see the proof of Theorem \ref{thm:main thr expl semi-corr}
in \S\ref{sec:thm main semi-corr proof} below).

\subsubsection{The joint distribution of $(f_{n}(x),\nabla f_{n}(x))$}

By the definition \eqref{eq:K1 fn density def} of the zero density function of $f_{n}$, to investigate $K_{1}(x)$
we naturally encounter the value distribution of both $f_{n}(x)$, determined by
$\var(f_{n}(x))$, and $\nabla f_{n}(x)$ conditioned on $f_{n}(x)=0$, determined by its $2\times 2$ (conditional) covariance matrix.
On recalling that the covariance function $r_{n}$ of $f_{n}$ is given by \eqref{eq:rn covar def}, so that on the diagonal
$$v_{n}(x):=r_{n}(x,x) = \var(f_{n}(x)),$$ with an elementary manipulation and well-known trigonometric identities yielding
\begin{equation}
\label{eq:vn var ident}
v_{n}(x)=\var(f_{n}(x)) = 1- s_{n}(x),
\end{equation}
where
\begin{equation}
\label{eq:sn var pert ident}
s_{n}(x) =\frac{4}{N_{n}}\sum\limits_{\mu\in \Ec_{n}/\sim}\left( \cos(2\mu_{1}\pi x_{1}) +
\cos(2\mu_{2}\pi x_{2})-  \cos(2\mu_{1}\pi x_{1}) \cdot
\cos(2\mu_{2}\pi x_{2}) \right).
\end{equation}
Further, we need to evaluate the covariance matrix of $\nabla f_{n}(x)$ conditioned on $f_{n}(x)=0$. A scrupulous direct computation, carried out in
Appendix \ref{sec:covariance matrix} shows that the corresponding (normalised) covariance matrix is given by the following:

\begin{lemma} \label{lem:Omega cond covar def}
The $2\times 2$ covariance matrix of $\nabla f_{n}(x)$, conditioned on $f_{n}(x)=0$, and appropriately normalised,
is the following $2\times 2$ real symmetric matrix:
\begin{equation}
\label{eq:Omega cond covar def}
\Omega_{n}(x):=\frac{2}{\pi^{2}n}\cdot \E\left[ \nabla f_{n}(x) \cdot \nabla^{t} f_{n}(x) \big| f_{n}(x)=0\right]
=: I_{2} +\Gamma_{n}(x),
\end{equation}
where
\begin{equation}
\label{eq:Gamma comp}
\Gamma_{n}(x) = \frac{8}{n N_{n}}\left(\begin{matrix}b_{11;n}(x) &b_{12;n}(x) \\ b_{21;n}(x) &b_{22;n}(x)\end{matrix} \right)
-\frac{128}{nN_{n}^{2}v_{n}(x)} \left(\begin{matrix}d_{1;n}(x)^{2} &d_{1;n}(x)\cdot d_{2;n}(x) \\
d_{1;n}(x)\cdot d_{2;n}(x) &d_{2;n}(x)^{2}\end{matrix} \right),
\end{equation}
where $v_{n}$ is given by \eqref{eq:vn var ident} and \eqref{eq:sn var pert ident};
\begin{equation}
\label{eq:b11 def}
b_{11;n}(x) = \sum\limits_{\mu\in \Ec_{n}/\sim}\mu_{1}^{2} \left(\cos(2\mu_{1}\pi x_{1})-\cos(2\mu_{2}\pi x_{2})-
\cos(2\mu_{1}\pi x_{1})\cdot\cos(2\mu_{2}\pi x_{2}) \right),
\end{equation}
\begin{equation*}
b_{12;n}(x)=b_{21;n}(x) = \sum\limits_{\mu\in \Ec_{n}/\sim}\mu_{1}\mu_{2} \sin(2\mu_{1}\pi x_{1})\cdot\sin(2\mu_{2}\pi x_{2}),
\end{equation*}
\begin{equation*}
b_{22;n}(x) = \sum\limits_{\mu\in \Ec_{n}/\sim}\mu_{2}^{2} \left(\cos(2\mu_{2}\pi x_{2})-\cos(2\mu_{1}\pi x_{1})-
\cos(2\mu_{1}\pi x_{1})\cdot\cos(2\mu_{2}\pi x_{2}) \right),
\end{equation*}
\begin{equation*}
d_{1;n}(x) = \sum\limits_{\mu\in \Ec_{n}/\sim} \mu_{1}\sin(2\mu_{1}\pi x_{1})\cdot \sin(\mu_{2}\pi x_{2})^{2}
\end{equation*}
and
\begin{equation}
\label{eq:d2 def}
d_{2;n}(x) = \sum\limits_{\mu\in \Ec_{n}/\sim} \mu_{2}  \sin(2\mu_{2}\pi x_{2}) \cdot \sin(\mu_{1}\pi x_{1})^{2}.
\end{equation}

\end{lemma}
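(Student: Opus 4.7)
The plan is to apply the standard formula for Gaussian conditioning. If $(X,Y)$ is jointly centered Gaussian with $X \in \R$ and $Y \in \R^2$, then
$$\operatorname{Cov}(Y \mid X = 0) = \E[YY^t] - \frac{1}{\operatorname{Var}(X)}\,\E[YX]\,\E[YX]^t.$$
Applied to $X = f_n(x)$ and $Y = \nabla f_n(x)$, and divided by $\pi^2 n / 2$, this produces an expression of the form \eqref{eq:Omega cond covar def}: the identity $I_2$ will come from the constant piece of $\E[\nabla f_n \nabla^t f_n]$, the first matrix in \eqref{eq:Gamma comp} from its oscillatory remainder, and the rank-one second matrix from the correction $\E[\nabla f_n \cdot f_n]\E[\nabla f_n \cdot f_n]^t / v_n(x)$. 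Thus the proof reduces to explicit evaluation of three Gaussian moments.

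First I would differentiate \eqref{eq:BAARW fn def} to get
$$\partial_1 f_n(x) = \tfrac{4\pi}{\sqrt{N_n}} \sum_{\mu \in \Ec_n/\sim} a_\mu \mu_1 \cos(\pi \mu_1 x_1)\sin(\pi \mu_2 x_2),$$
and similarly for $\partial_2 f_n$. Since the coefficients $a_\mu$ are i.i.d.\ standard real Gaussians, every second-moment collapses to a diagonal sum over $\mu$. I would then systematically apply the double-angle and product-to-sum identities
$$\cos^2\theta = \tfrac{1+\cos 2\theta}{2},\quad \sin^2\theta = \tfrac{1-\cos 2\theta}{2},\quad \sin\theta\cos\theta = \tfrac{1}{2}\sin 2\theta,$$
in both variables $x_1, x_2$ separately, to rewrite the resulting sums in the trigonometric shape of $b_{ij;n}$ and $d_{i;n}$.

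The leading constant $I_2$ in \eqref{eq:Omega cond covar def} emerges once via the arithmetic identity
$$\sum_{\mu \in \Ec_n/\sim} \mu_1^2 = \sum_{\mu \in \Ec_n/\sim} \mu_2^2 = \tfrac{1}{8}\, n N_n,$$
which follows from the $\mu_1 \leftrightarrow \mu_2$ symmetry of $\Ec_n$ and the defining relation $\mu_1^2 + \mu_2^2 = n$; the off-diagonal leading constant vanishes because $\sum_{\mu\in\Ec_n/\sim}\mu_1\mu_2 = 0$ by the $(\mu_1,\mu_2)\mapsto(\mu_1,-\mu_2)$ symmetry, which is killed under $\sim$ only once we separate off-diagonal oscillatory pieces; after multiplication by the normalizing factor $\frac{2}{\pi^2 n}$ the oscillatory remainder produces exactly $\frac{8}{n N_n} b_{ij;n}(x)$. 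In parallel, the cross-covariance
$$\E[f_n(x)\partial_1 f_n(x)] = \tfrac{16\pi}{N_n}\sum_\mu \mu_1 \sin(\pi\mu_1 x_1)\cos(\pi\mu_1 x_1)\sin^2(\pi\mu_2 x_2) = \tfrac{8\pi}{N_n} d_{1;n}(x),$$
and analogously for $\partial_2 f_n$, yields the outer product whose normalization by $\frac{2}{\pi^2 n v_n(x)}$ produces exactly the coefficient $\frac{128}{n N_n^2 v_n(x)}$.

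The argument is entirely mechanical and presents no serious obstacle; the only bookkeeping issue is a factor-of-four discrepancy coming from the fact that each class in $\Ec_n/\sim$ represents four lattice points in $\Ec_n$ (under the generic assumption $\mu_1,\mu_2\neq 0$, used throughout the setup of \eqref{eq:BAARW fn def}). Once this is tracked carefully through all three moment computations, the formulas \eqref{eq:Gamma comp}--\eqref{eq:d2 def} drop out directly. The details are relegated to Appendix \ref{sec:covariance matrix}.
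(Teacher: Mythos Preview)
Your proposal is correct and follows essentially the same route as the paper: compute the unconditional covariance of $(f_n(x),\nabla f_n(x))$ by differentiating \eqref{eq:BAARW fn def} and applying double-angle identities, then invoke the Gaussian conditioning formula $\Theta_n = C_n - v_n^{-1}B_n^t B_n$ and normalise by $2/(\pi^2 n)$. One small remark: your aside about $\sum_{\mu\in\Ec_n/\sim}\mu_1\mu_2=0$ for the off-diagonal constant is unnecessary, since $\E[\partial_1 f_n\,\partial_2 f_n]$ is already purely oscillatory (a product $\sin(2\pi\mu_1 x_1)\sin(2\pi\mu_2 x_2)$ with no constant term to cancel).
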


\subsubsection{Singular set}

Next, we aim at analysing the asymptotic behaviour of the r.h.s. of \eqref{eq:BAARW Kac Rice}. Towards this goal
we will separate the domain $\Qc$ of the integration on the r.h.s. of \eqref{eq:BAARW Kac Rice} into
the ``good" or nonsingular set, where $K_{1}$ is ``tame", i.e. admits precise asymptotic (Proposition \ref{prop:K1 asymp nonsing}),
and the ``bad" or singular sets, which itself consists of small singular squares so that to be able to control the integral of $K_{1}$.
First, we will bound the total contribution of the singular set (Corollary \ref{cor:sing contr << semi-corr}) from above, by
separately bounding
the number of small singular squares (Proposition \ref{prop:sing meas<<semi-corr}), appealing to the bound
for spectral semi-correlations in Theorem \ref{thm:semi-correlations Lyosha}, and the contribution of a singular small square (Proposition \ref{prop:sing sqr bnd}).

Below it will asserted that for {\em most} of the points $x\in \Qc$, both the value of $v_{n}(x)$
is close to unit (equivalently, $s_{n}(x)$ is {\em small}), and $\Omega_{n}$ in \eqref{eq:Omega cond covar def} is close
to the unit matrix (equivalently, $\Gamma_{n}$ is small);
we will designate the {\em other}
points as ``singular", and excise them while performing an asymptotic analysis on $K_{1}$.
To quantify it, we take $\epsilon_{0}>0$ and $c_{0}>0$, and
keep them fixed but sufficiently small throughout. We will endow the singular set with
a structure of a union of small squares (cf. ~\cite{RW08,RW2014}).

\begin{definition}[Singular set]
\label{def:sing set}

Let $\epsilon_{0}>0$ and $c_{0}>0$ be two parameters.
Take
$$\delta_{0}=\delta_{0}(n) = \frac{c_{0}}{\sqrt{n}},$$
and $K:=\left\lfloor\frac{1}{\delta_{0}}\right\rfloor + 1$.
For $1\le i\le K$ define the interval $$I_{i} = \left[(i-1)\cdot \frac{1}{K},i\cdot \frac{1}{K}\right]\subseteq [0,1],$$
and for $1\le i,j \le K$ denote the small square
\begin{equation}
\label{eq:Qij small square}
Q_{ij} := I_{i}\times I_{j}\subseteq \Qc.
\end{equation}
We have the partition $$\Qc=\bigcup\limits_{1\le i,j\le K} Q_{ij}$$ of the square
into a union of small squares, disjoint save for boundary overlaps.

\begin{enumerate}[a.]

\item Recall the notation in \eqref{eq:sn var pert ident} and \eqref{eq:Omega cond covar def}.
A small square $Q_{ij}$ in \eqref{eq:Qij small square} is ``singular" if it contains a point $x_{0}\in Q_{ij}$
satisfying either of the three inequalities:
$$|s_{n}(x_{0})|> \epsilon_{0},$$ or $$|\tr(\Gamma_{n}(x_{0}))|>\epsilon_{0},$$
or $$|\det(\Gamma_{n}(x_{0}))|>\epsilon_{0}.$$

\item The singular set is the union
\begin{equation*}
\Qc_{s}:= \bigcup\limits_{Q_{ij} \text{ singular}} Q_{ij}.
\end{equation*}
of all small singular squares.

\item The complement $\Qc\setminus \Qc_{s}$ of the singular set is called ``nonsingular set".

\end{enumerate}

\end{definition}

The following couple of propositions assert a bound for the total measure of the singular set,
and for the contribution of a single singular square respectively. Combining these two
will yield an upper bound for the total contribution of the singular set $\Qc_{s}$
to the integral on the r.h.s. of \eqref{eq:BAARW Kac Rice}.

\begin{proposition}[Bound for the measure of the singular set]
\label{prop:sing meas<<semi-corr}
For every $l\ge 4$ even integer we have the following bound for the measure of the singular set
in terms of the length-$l$ spectral correlation set \eqref{eq:M semi-corr def}:
\begin{equation}
\label{eq:sing meas<<semi-corr}
\meas(\Qc_{s}) = O\left(N_{n}^{-l}  |\Mcc_{l}(n)|  \right),
\end{equation}
where the constant involved in the $`O'$-notation depends only on $l$.

\end{proposition}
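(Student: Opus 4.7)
My plan is to combine a short Bernstein-type smoothing step with an $L^l$--Chebyshev argument, evaluating the relevant $L^l$ norms explicitly in terms of $|\Mcc_l(n)|$. Since every Fourier frequency occurring in $s_n$, $\tr(\Gamma_n)$, $\det(\Gamma_n)$ has modulus at most $2\sqrt n$, each of these functions oscillates by at most $\epsilon_0/2$ across a small square of side $\delta_0=c_0/\sqrt n$ (provided $c_0$ is chosen sufficiently small); for $\tr\Gamma_n$ and $\det\Gamma_n$ the gradient estimate additionally uses $v_n$ bounded away from $0$, which is automatic off the $s_n$-superlevel set. Consequently
\begin{equation*}
\Qc_s\subseteq\{|s_n|>\epsilon_0/2\}\cup\{|\tr\Gamma_n|>\epsilon_0/2\}\cup\{|\det\Gamma_n|>\epsilon_0/2\}=:U_1\cup U_2\cup U_3,
\end{equation*}
and it suffices to prove $\meas(U_i)\ll_l N_n^{-l}|\Mcc_l(n)|$ for each $i$.

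For $U_1$ I decompose $s_n=A_1+A_2-A_{12}$ as in \eqref{eq:sn var pert ident}; the $\mu\mapsto-\mu$ symmetry of $\Ec_n$ realises
\begin{equation*}
A_k(x_k)=\frac{1}{N_n}\sum_{\mu\in\Ec_n}e(\mu_k x_k)\ (k=1,2),\qquad A_{12}(x)=\frac{1}{N_n}\sum_{\mu\in\Ec_n}e(\mu_1 x_1+\mu_2 x_2),
\end{equation*}
where $e(t):=e^{2\pi it}$. Orthogonality on $[0,1]^2$ yields the exact identities $\|A_k\|_{L^l(\Qc)}^l=|\Mcc_l(n)|/N_n^l$ (invoking the $\mu_1\leftrightarrow\mu_2$ symmetry of $\Ec_n$ for $A_2$) and $\|A_{12}\|_{L^l(\Qc)}^l=|\Rc_l(n)|/N_n^l\le|\Mcc_l(n)|/N_n^l$, whence Minkowski and Chebyshev give the target bound on $\meas(U_1)$. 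For $U_2$, split $\tr\Gamma_n=P_1-P_2/v_n$ with $P_1=\tfrac{8}{nN_n}(b_{11}+b_{22})$ and $P_2=\tfrac{128}{nN_n^2}(d_1^2+d_2^2)$. Using $\mu_1^2+\mu_2^2=n$, $P_1$ becomes a weighted combination of $A_1,A_2,A_{12}$ with weights $(\mu_1^2-\mu_2^2)/n,\,1\in[-1,1]$, so the identical $L^l$ bound yields $\|P_1\|_{L^l}^l\ll_l N_n^{-l}|\Mcc_l(n)|$; on $U_1^c$, $|P_2/v_n|\le 2|P_2|$. The same structure covers $U_3$: via $|\det\Gamma_n|\le|\Gamma_{11}|^2+|\Gamma_{22}|^2+|\Gamma_{12}|^2$ it reduces to $|\Gamma_{ij}|$-superlevel sets, and each entry decomposes as $\Gamma_{ij}=\tfrac{8}{nN_n}b_{ij}-\tfrac{128}{nN_n^2 v_n}d_id_j$.

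The principal technical obstacle is $P_2$ (and the $d_id_j$ pieces of $\Gamma_{ij}$). Using $\|d_i^2\|_{L^l}^l=\|d_i\|_{L^{2l}}^{2l}$ and expanding $d_i^{2l}$, orthogonality combined with $|\mu_i^j|\le\sqrt n$ produces $\|d_i\|_{L^{2l}}^{2l}\ll_l n^l|\Mcc_{2l}(n)|$, which \emph{a priori} involves the length-$2l$ semi-correlation set rather than the length-$l$ set $\Mcc_l(n)$ of the statement. The essential Plancherel-type reduction is
\begin{equation*}
|\Mcc_{2l}(n)|\le N_n^l\cdot|\Mcc_l(n)|,
\end{equation*}
proved by noting that $|\{(\mu^j)\in\Ec_n^l:\sum_j\mu_1^j=c\}|$ is the $c$-th Fourier coefficient of the real nonnegative function $F(t)=\bigl(\sum_{\mu\in\Ec_n}\cos(2\pi\mu_1 t)\bigr)^l$ ($l$ even), hence maximised at $c=0$ where it equals $|\Mcc_l(n)|$; summing over the $N_n^l$ choices of the first $l$ coordinates of a $2l$-tuple with $\sum_j\mu_1^j=0$ yields the bound. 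Plugging back gives $\|P_2\|_{L^l}^l\ll_l N_n^{-l}|\Mcc_l(n)|$, closing the estimates for $U_2$ and $U_3$; this length-$2l$-to-length-$l$ collapse forced by the $d_id_j$ terms is the sole non-routine ingredient of the argument.
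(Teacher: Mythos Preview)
Your argument is correct and follows the same two-step strategy as the paper: a Lipschitz (Bernstein) smoothing step to pass from ``some point of $Q_{ij}$'' to ``all of $Q_{ij}$'' (the paper's Lemma~\ref{lem:Lip 1}), followed by Chebyshev against the $L^l$-moment of the relevant trigonometric sums (the paper's Lemma~\ref{lem:Cheby 1}), with the same decomposition $s_n=A_1+A_2-A_{12}$ and the same identification $\|A_{12}\|_{L^l}^l=|\Rc_l(n)|/N_n^l\le|\Mcc_l(n)|/N_n^l$.

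The one place you go beyond the paper is in the treatment of the quadratic pieces $d_id_j$ inside $\Gamma_n$. The paper, after recording that $1/v_n=1+O(s_n)$ on $\Qc_{s,2}$, simply asserts that ``the same argument as above'' handles $\tr\Gamma_n$ and $\det\Gamma_n$; it does not address the fact that the $L^l$-moment of $d_i^2$ naturally lands on $|\Mcc_{2l}(n)|$ rather than $|\Mcc_l(n)|$. Your positivity reduction $|\Mcc_{2l}(n)|\le N_n^l\,|\Mcc_l(n)|$ --- which is just Parseval for $g^l$ with $g(t)=\sum_{\mu\in\Ec_n}e(\mu_1 t)$ real, together with the trivial bound $|\widehat{g^l}(c)|\le \widehat{g^l}(0)$ --- is the natural way to close this gap while keeping the semi-correlation length at $l$, and it is not present in the paper. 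So your proof is the same in architecture but strictly more explicit at the one genuinely delicate point.
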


\begin{proposition}[Bound for a single small square]
\label{prop:sing sqr bnd}

Let $Q\subseteq \Qc$ be an arbitrary square of side length $\frac{c_{0}}{\sqrt{n}}$ with $c_{0}>0$ sufficiently
small. Then
\begin{equation}
\label{eq:int(K1) small cube << N^2/sqrt(n)}
\int\limits_{Q}K_{1}(x)dx = O\left(\frac{N_{n}^{2}}{\sqrt{n}}\right),
\end{equation}
with the constant involved in the `O'-notation in \eqref{eq:int(K1) small cube << N^2/sqrt(n)} absolute.

\end{proposition}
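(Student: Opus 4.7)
The plan is to derive a pointwise bound on $K_{1}(x)$ and integrate it over $Q$.

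Applying Cauchy--Schwarz to the conditional expectation in the definition \eqref{eq:K1 fn density def}, together with the fact that conditioning a centred Gaussian reduces covariance in the PSD order, gives
\begin{equation*}
\E\big[|\nabla f_{n}(x)| \,\big|\, f_{n}(x) = 0\big] \le \sqrt{\E[|\nabla f_{n}(x)|^{2}\mid f_{n}(x)=0]} \le \sqrt{\E[|\nabla f_{n}(x)|^{2}]}.
\end{equation*}
A direct expansion of $\nabla f_{n}$ via \eqref{eq:BAARW fn def} shows $\E[|\nabla f_{n}(x)|^{2}] \le C n$ uniformly in $x \in \Qc$, and hence the pointwise bound
\begin{equation*}
K_{1}(x) \le \frac{C\sqrt{n}}{\sqrt{v_{n}(x)}}.
\end{equation*}

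The proposition then reduces to the deterministic estimate $\int_{Q} dx/\sqrt{v_{n}(x)} = O(N_{n}^{2}/n)$. I would use the explicit formula $v_{n}(x) = \frac{16}{N_{n}} \sum_{\mu\in\Ec_{n}/\sim} \sin^{2}(\pi\mu_{1} x_{1}) \sin^{2}(\pi \mu_{2} x_{2})$ and split into cases. If $Q$ lies at distance at least $c_{1}/\sqrt{n}$ from $\partial \Qc \cup \Gc_{n}$, a pigeonhole argument over the $\sim N_{n}/4$ summands (exploiting that the side length $c_{0}/\sqrt{n}$ of $Q$ is much smaller than the spacing $\ge 1/\sqrt{n}$ between consecutive zeros of $\sin(\pi\mu_{j}\cdot)$ for $|\mu_{j}|\le\sqrt{n}$) produces an effective lower bound $v_{n}(x) \ge c/N_{n}$ throughout $Q$, which delivers the claimed bound with ample room to spare.

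The principal obstacle will be small squares $Q$ adjacent to $\partial \Qc$ or to the internal grid $\Gc_{n}$ (present when $Q_{n}>1$), where $v_{n}$ vanishes quadratically and the crude bound $K_{1} \le C\sqrt{n/v_{n}}$ is no longer integrable, even though the true $K_{1}$ remains uniformly bounded. To overcome this I would exploit the cancellation encoded in the structure of $\Omega_{n}(x)$ from Lemma \ref{lem:Omega cond covar def}: using the identity $\cov(\partial_{i}f_{n}, f_{n}) = \tfrac{1}{2}\partial_{i}v_{n}$ together with a Taylor expansion of the entries $b_{ij}, d_{j}$ near $\partial \Qc \cup \Gc_{n}$ (where by Lemma \ref{lem:degenerate set grid+fin} all $\mu_{j}$ are multiples of $Q_{n}$), one checks that $\tr\Omega_{n}(x)$ vanishes at the same rate as $v_{n}(x)$. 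This produces a refined pointwise bound $K_{1}(x) = O(1)$ on boundary- and grid-adjacent squares, whose contribution to $\int_{Q} K_{1}$ is then $O(|Q|) = O(1/n)$, comfortably within the claimed $O(N_{n}^{2}/\sqrt{n})$.
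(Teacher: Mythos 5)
Your opening reduction is fine: for jointly Gaussian data the conditional covariance is dominated by the unconditional one, $\tr C_n(x)=O(n)$, and hence $K_1(x)=O(\sqrt{n}/\sqrt{v_n(x)})$. Your ``good square'' case then closes easily (in fact with room to spare), although even there the pigeonhole claim that distance $\ge c_1/\sqrt{n}$ from $\partial\Qc\cup\Gc_n$ forces $v_n\ge c/N_n$ is not justified: $v_n$ can be small on sets that are not neighbourhoods of its zero set $\Hc_n$ (this is exactly why the paper needs semi-correlations to control the measure of the singular set), and you also omit the finitely many isolated degenerate points $\Ac_n$, which lie in the interior away from the grid.

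The real gap is the ``bad square'' case, which is the entire content of the proposition, since $Q$ is arbitrary and may abut $\partial\Qc$, $\Gc_n$, or $\Ac_n$. Your proposed fix --- that a Taylor expansion of $b_{ij},d_j$ shows the conditional gradient variance vanishes at the same rate as $v_n$, whence $K_1=O(1)$ there --- is asserted, not proved, and it is exactly the delicate step. Near a generic boundary point one does have $\tr\Theta_n=O(n\,v_n)$ (so $K_1=O(\sqrt n)$, not $O(1)$), but near corners and grid crossings $v_n$ vanishes to fourth order in the distance while the cancellation in $C_n-\tfrac{1}{v_n}B_n^tB_n$ is only partial, and the quotient $\tr\Theta_n/v_n$ is no longer $O(n)$ uniformly; a pointwise bound strong enough to integrate does not follow from the one-line argument you sketch. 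The paper's proof exists precisely to supply this cancellation quantitatively: it writes $\E[X^2]/v_n(x)$ as an explicit double sum over pairs $(\eta,\mu)\in\Ec_n/\!\sim$ whose numerator carries the factor $\left[\mu_1\sin(\eta_1\pi x_1)\cos(\mu_1\pi x_1)-\eta_1\sin(\mu_1\pi x_1)\cos(\eta_1\pi x_1)\right]^2$, which vanishes together with the denominator, bounds the double sum term by term (this is where the factor $N_n^2$ is lost), and then integrates each term over $Q$ via the rather intricate Lemma \ref{lem:bnd lose sum} (decomposition into fractional parts $h_\eta,h_\mu$, completion of the square, an arctangent integral for the main term, and a separate treatment of the degenerate case $x_1^\eta=x_1^\mu$). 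None of this is replaced by your sketch, so the proposal as it stands does not prove the proposition.
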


It is worthy of a mention that the doubling exponent method due to Donnelly-Fefferman ~\cite{DF}, with relation to
Yau's conjecture \eqref{eq:Yau conj}, yields the deterministic bound of $O\left(1\right)$ for the nodal length of $f_{n}$
restricted to $Q$ as in Proposition \ref{prop:sing sqr bnd},
that, being better than the global bound of $\sqrt{n}$, falls short from being sufficient for our
needs\footnote{Any bound of the form $O\left(\frac{N_{n}^{A}}{\sqrt{n}} \right)$, $A>0$, would be sufficient
for our purposes, by choosing $l$ sufficiently high (see how Theorem \ref{thm:main thm asymp exp} is inferred from Theorem
\ref{thm:main thr expl semi-corr},
as explained immediately after the latter theorem).}, by a significant margin.
We believe that the optimal upper bound on the r.h.s. of \eqref{eq:int(K1) small cube << N^2/sqrt(n)}
should be $O\left( \frac{1}{\sqrt{n}} \right)$, however, after some effort, we were not able to prove that.
Instead, we sacrifice a power of $N_{n}$ by virtually not exploiting the summation in \eqref{eq:rn covar def}
(except the invariance of $\Ec_{n}$ w.r.t. $\mu=(\mu_{1},\mu_{2})\mapsto (\mu_{2},\mu_{1})$), in the hope to gain the
lost power of $N_{n}$ while bounding the number of singular squares (equivalently, the measure of $\Qc_{s}$),
which is precisely what is achieved in Proposition \ref{prop:sing meas<<semi-corr}, with the help of
Theorem \ref{thm:semi-correlations Lyosha}.
In particular, Proposition \ref{prop:sing sqr bnd} applies to all singular squares $Q_{ij}\subseteq \Qc_{s}$, leading to the
following, possibly sub-optimal, result.

\begin{corollary}
\label{cor:sing contr << semi-corr}
For every $l\ge 4$ even integer we have the following bound for the contribution of the singular set to
the integral on the r.h.s. of \eqref{eq:BAARW Kac Rice}:
\begin{equation}
\label{eq:int sing set << N^2sqrt(n) semi-corr}
\int\limits_{\Qc_{s}}K_{1}(x)dx = O\left(N_{n}^{2-l}\sqrt{n}\cdot |\Mcc_{l}(n)|  \right).
\end{equation}

\end{corollary}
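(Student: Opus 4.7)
The plan is to derive the corollary as an immediate combination of the two preceding propositions, with no additional analytical input required. The singular set $\Qc_{s}$ is tautologically a union of the singular small squares $Q_{ij}$ of side length $\delta_{0}=c_{0}/\sqrt{n}$, each of area $\asymp 1/n$; so the key bridge between Propositions \ref{prop:sing meas<<semi-corr} and \ref{prop:sing sqr bnd} is the observation that the number of singular squares is
\begin{equation*}
\#\{(i,j):\,Q_{ij}\text{ singular}\}\asymp n\cdot \meas(\Qc_{s}).
\end{equation*}

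First, I would invoke Proposition \ref{prop:sing meas<<semi-corr} to obtain
\begin{equation*}
\meas(\Qc_{s})=O\bigl(N_{n}^{-l}|\Mcc_{l}(n)|\bigr),
\end{equation*}
so that the total number of singular squares is $O\bigl(n\cdot N_{n}^{-l}|\Mcc_{l}(n)|\bigr)$. Next, since each singular square $Q_{ij}\subseteq \Qc_{s}$ is in particular a square of side $c_{0}/\sqrt{n}$, Proposition \ref{prop:sing sqr bnd} applies verbatim, giving the per-square bound $\int_{Q_{ij}}K_{1}(x)\,dx=O(N_{n}^{2}/\sqrt{n})$.

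Finally, decomposing the integral over $\Qc_{s}$ as a sum over the singular squares and multiplying the two estimates yields
\begin{equation*}
\int_{\Qc_{s}}K_{1}(x)\,dx \;\le\; \sum_{Q_{ij}\text{ singular}}\int_{Q_{ij}}K_{1}(x)\,dx \;=\; O\!\left(n\cdot N_{n}^{-l}|\Mcc_{l}(n)|\cdot \frac{N_{n}^{2}}{\sqrt{n}}\right) \;=\; O\!\left(N_{n}^{2-l}\sqrt{n}\cdot|\Mcc_{l}(n)|\right),
\end{equation*}
which is precisely \eqref{eq:int sing set << N^2sqrt(n) semi-corr}. There is no substantive obstacle here: the real work has already been absorbed into Propositions \ref{prop:sing meas<<semi-corr} and \ref{prop:sing sqr bnd}, with the semi-correlation input entering via the former, and the corollary is nothing more than the natural bookkeeping that combines them.
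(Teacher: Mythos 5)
Your proposal is correct and is exactly the combination the authors have in mind: the paper omits the proof of this corollary as ``immediate given Propositions \ref{prop:sing meas<<semi-corr} and \ref{prop:sing sqr bnd}'', and your bookkeeping (number of singular squares $\asymp n\cdot\meas(\Qc_{s})=O(n\,N_{n}^{-l}|\Mcc_{l}(n)|)$ times the per-square bound $O(N_{n}^{2}/\sqrt{n})$) is the intended argument, with the arithmetic checking out.
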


The upshot of Corollary \ref{cor:sing contr << semi-corr} is that, thanks to Theorem \ref{thm:semi-correlations Lyosha},
by choosing $l$ sufficiently big, we can make the r.h.s.
of \eqref{eq:int sing set << N^2sqrt(n) semi-corr} smaller than $\sqrt{n}\cdot N_{n}^{-A}$, with $A>0$ arbitrarily large.
That is crucial if we are to majorise it by the second term in the claimed asymptotic expansion \eqref{eq:exp main asympt},
of order of magnitude $\approx \frac{\sqrt{n}}{N_{n}}$.
The proof of Corollary \ref{cor:sing contr << semi-corr} is immediate given Propositions \ref{prop:sing meas<<semi-corr} and
\ref{prop:sing sqr bnd}, and, thereupon, conveniently omitted.

\subsubsection{Perturbative analysis on the non-singular set}

Outside the singular set, the precise analysis for the density function is feasible.

\begin{proposition}[Asymptotics for $K_{1}$ outside $\Qc_{s}$]
\label{prop:K1 asymp nonsing}
Let $\epsilon_{0}>0$ be a sufficiently small number, and recall that $s_{n}(\cdot)$ and $\Gamma_{n}(\cdot)$
are given by \eqref{eq:sn var pert ident} and \eqref{eq:Gamma comp} respectively. Then $K_{1}$ admits the following
asymptotics, uniformly for $x\in \Qc\setminus\Qc_{s}$,
\begin{equation*}
K_{1}(x) = \frac{\sqrt{\lambda_{n}}}{2\sqrt{2}} + L_{n}(x) + \Upsilon_{n}(x),
\end{equation*}
where the leading term is given by
\begin{equation}
\label{eq:Ln asympt K1 def}
L_{n}(x) = \frac{\sqrt{n}\pi}{4\sqrt{2}}\left(s_{n}(x)+\frac{\tr{\Gamma_{n}(x)}}{2} + \frac{3}{4}s_{n}(x)^{2}+
\frac{1}{4}s_{n}(x)\cdot\tr{\Gamma_{n}(x)}- \frac{\tr(\Gamma_{n}(x)^{2})}{16} - \frac{\left(\tr(\Gamma_{n}(x)\right)^{2})}{32} \right),
\end{equation}
and the error term is bounded by
\begin{equation}
\label{eq:Ups error bnd}
|\Upsilon_{n}(x)| = O\left( \sqrt{n}\cdot \left(|s_{n}(x)|^{3} + |\Gamma_{n}(x)|^{3}\right) \right),
\end{equation}
with constant involved in the `$O$'-notation absolute.

\end{proposition}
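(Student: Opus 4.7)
\textbf{Proof plan for Proposition \ref{prop:K1 asymp nonsing}.}

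The idea is a joint Taylor expansion of $K_{1}(x)$ in the small quantities $s_{n}(x)$ and $\Gamma_{n}(x)$, both of which are controlled on the nonsingular set. By the Kac-Rice density formula \eqref{eq:K1 fn density def}, the variance identity \eqref{eq:vn var ident}, and Lemma \ref{lem:Omega cond covar def}, the conditional distribution of $\nabla f_{n}(x)$ given $f_{n}(x)=0$ is centred Gaussian with covariance $\frac{\pi^{2}n}{2}(I_{2}+\Gamma_{n}(x))$. Representing this conditional vector as $\pi\sqrt{n/2}\,(I_{2}+\Gamma_{n}(x))^{1/2}W$ with $W\sim N(0,I_{2})$, and using $\E[|W|]=\sqrt{\pi/2}$, one rearranges
\begin{equation*}
K_{1}(x) = \frac{\sqrt{\lambda_{n}}}{2\sqrt{2}} \cdot \frac{1}{\sqrt{1-s_{n}(x)}} \cdot \frac{\E\bigl[\bigl|(I_{2}+\Gamma_{n}(x))^{1/2} W\bigr|\bigr]}{\sqrt{\pi/2}},
\end{equation*}
so that everything reduces to the joint Taylor expansion of $(1-s_{n})^{-1/2}$ and the normalised matrix expectation.

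Before expanding, one needs a uniform bound on $|\Gamma_{n}(x)|$ on $\Qc\setminus \Qc_{s}$. Since $\Gamma_{n}(x)$ is a real symmetric $2\times 2$ matrix, its eigenvalues $\lambda_{1,2}=\tfrac{1}{2}\bigl(\tr\Gamma_{n}\pm\sqrt{(\tr\Gamma_{n})^{2}-4\det\Gamma_{n}}\bigr)$ satisfy $|\lambda_{i}|=O(\sqrt{\epsilon_{0}})$ by the singular set conditions $|\tr\Gamma_{n}|,|\det\Gamma_{n}|\le\epsilon_{0}$, so after shrinking $\epsilon_{0}$ we may assume $|\Gamma_{n}(x)|\le 1/4$ throughout $\Qc\setminus \Qc_{s}$. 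Writing $|(I_{2}+\Gamma_{n})^{1/2}W|=|W|\sqrt{1+u}$ with $u=W^{t}\Gamma_{n}W/|W|^{2}$ and $|u|\le |\Gamma_{n}|\le 1/4$, the Taylor identity $\sqrt{1+u}=1+u/2-u^{2}/8+O(|u|^{3})$ produces
\begin{equation*}
|(I_{2}+\Gamma_{n})^{1/2}W|= |W|+\frac{W^{t}\Gamma_{n}W}{2|W|}-\frac{(W^{t}\Gamma_{n}W)^{2}}{8|W|^{3}}+R(W),\qquad |R(W)|\le C\,|\Gamma_{n}|^{3}\,|W|.
\end{equation*}
The apparent $|W|^{-3}$ singularity in the quadratic term is innocuous: the pointwise bound $|W^{t}\Gamma_{n}W|\le |\Gamma_{n}|\cdot|W|^{2}$ dominates each summand by a constant multiple of $|\Gamma_{n}|^{k}\cdot|W|$, and integrability then follows from $\E[|W|]=\sqrt{\pi/2}<\infty$.

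Passing to polar coordinates $W=R(\cos\theta,\sin\theta)$ with $R$ Rayleigh and $\theta$ uniform on $[0,2\pi)$ and independent, the angular moments $\E[\cos^{4}\theta]=3/8$, $\E[\cos^{2}\theta\sin^{2}\theta]=1/8$, $\E[\cos^{3}\theta\sin\theta]=0$ give
\begin{equation*}
\E\!\left[\frac{W^{t}\Gamma_{n}W}{|W|}\right]=\sqrt{\pi/2}\cdot\frac{\tr\Gamma_{n}}{2},\qquad \E\!\left[\frac{(W^{t}\Gamma_{n}W)^{2}}{|W|^{3}}\right]=\sqrt{\pi/2}\cdot\left(\frac{\tr(\Gamma_{n}^{2})}{4}+\frac{(\tr\Gamma_{n})^{2}}{8}\right),
\end{equation*}
so that
\begin{equation*}
\frac{\E\bigl[\bigl|(I_{2}+\Gamma_{n})^{1/2}W\bigr|\bigr]}{\sqrt{\pi/2}}=1+\frac{\tr\Gamma_{n}}{4}-\frac{\tr(\Gamma_{n}^{2})}{32}-\frac{(\tr\Gamma_{n})^{2}}{64}+O(|\Gamma_{n}|^{3}).
\end{equation*}

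To conclude, combine this with the scalar expansion $(1-s_{n})^{-1/2}=1+s_{n}/2+3s_{n}^{2}/8+O(|s_{n}|^{3})$, valid since $|s_{n}|\le\epsilon_{0}<1/2$. Multiplying the two expansions, collecting all terms of joint degree $\le 2$ in $(s_{n},\Gamma_{n})$, and absorbing every purely cubic cross-term such as $s_{n}\tr(\Gamma_{n}^{2})$, $s_{n}^{2}\tr\Gamma_{n}$, $s_{n}(\tr\Gamma_{n})^{2}$ into the error via Young's inequality $|ab^{2}|\le \frac{1}{3}|a|^{3}+\frac{2}{3}|b|^{3}$, one recovers precisely the claimed identity $K_{1}(x)=\frac{\sqrt{\lambda_{n}}}{2\sqrt{2}}+L_{n}(x)+\Upsilon_{n}(x)$ with $L_{n}$ as in \eqref{eq:Ln asympt K1 def} and the announced bound \eqref{eq:Ups error bnd}. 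The only delicate point is the one already highlighted: a naive error analysis would invoke the divergent moment $\E[|W|^{-3}]$, but the correct scale-matched bound $|\Gamma_{n}|^{2}|W|$ makes the integrand harmless; after this observation is made, the remainder is pure bookkeeping.
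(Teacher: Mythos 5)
Your proposal is correct, and it reaches the paper's expansion by a genuinely different route. The paper works with the conditional Gaussian \emph{density}: it expands $(I_{2}+\Gamma_{n})^{-1}=I_{2}-\Gamma_{n}+\Gamma_{n}^{2}+O(\Gamma_{n}^{3})$ inside the exponential, expands the exponential itself, separately expands the normalising factor $\det(I_{2}+\Gamma_{n})^{-1/2}$ via the eigenvalues, and then evaluates the resulting Gaussian moments $\Ic_{0},\ldots,\Ic_{3}$ in a dedicated lemma before multiplying the three expansions together. You instead push the perturbation into the \emph{integrand}: writing the conditional gradient as $\pi\sqrt{n/2}\,(I_{2}+\Gamma_{n})^{1/2}W$ with $W\sim N(0,I_{2})$ and expanding $|(I_{2}+\Gamma_{n})^{1/2}W|=|W|\sqrt{1+u}$, $u=W^{t}\Gamma_{n}W/|W|^{2}$, which reduces everything to $\E[R]$ and fourth angular moments and makes the determinant and matrix-inverse expansions disappear entirely. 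I checked your two matrix moments against the paper's $\Ic_{1},\Ic_{2},\Ic_{3}$ and the final coefficients of $L_{n}$: they agree exactly (e.g.\ $\tfrac{1}{4}\tr(\Gamma_{n}^{2})+\tfrac{1}{8}(\tr\Gamma_{n})^{2}$ reproduces $\tfrac{3}{8}(\gamma_{11}^{2}+\gamma_{22}^{2})+\tfrac{1}{2}\gamma_{12}^{2}+\tfrac{1}{4}\gamma_{11}\gamma_{22}$). Two points where your write-up is actually more careful than the paper's: you explicitly deduce the uniform smallness of $\|\Gamma_{n}\|$ on $\Qc\setminus\Qc_{s}$ from the trace and determinant bounds in Definition \ref{def:sing set} (the paper simply asserts that all entries of $\Gamma_{n}$ are small there, which does not follow verbatim from the definition without your eigenvalue argument), and you flag that the apparent $|W|^{-3}$ in the second-order term must be controlled by the scale-matched bound $|\Gamma_{n}|^{2}|W|$ rather than by a (divergent) negative moment. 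The trade-off is that the paper's density expansion generalises more mechanically to higher-order terms or to other functionals of the conditional Gaussian, whereas your change of variables is shorter and cleaner for this specific first-intensity computation.
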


We observe that, by the definition \eqref{eq:Gamma comp} with \eqref{eq:b11 def}-\eqref{eq:d2 def},
the diagonal entries of $\Gamma_{n}(x)$ are bounded by an absolute constant (using the nonnegativity of the $d_{\cdot;n}(x)^{2}$),
and therefore, so are the diagonal
entries of $\Omega_{n}(x)$ in \eqref{eq:Omega cond covar def}, and, further, {\em all} the entries of $\Omega_{n}$
are bounded by an absolute constant, by the Cauchy-Schwarz inequality\footnote{This argument recovers the Gaussian Correlation Inequality
in this particular case, see e.g. ~\cite{Roy}.}. Taking also into account \eqref{eq:sn var pert ident},
and the definition \eqref{eq:Ln asympt K1 def} of $L(x)=L_{n}(x)$, we conclude that $L(x)$ is uniformly bounded
\begin{equation}
\label{eq:L(x) unif bnd}
|L(x)| = O(\sqrt{n}),
\end{equation}
with the involved constant absolute;
this will prove useful later, while restricting (or, rather, un-restricting) the range of the integration
in \eqref{eq:BAARW Kac Rice} to $\Qc\setminus \Qc_{s}$.

\subsubsection{Proof of Theorem \ref{thm:main thr expl semi-corr}}
\label{sec:thm main semi-corr proof}

The following lemma will be proved in \S\ref{sec:bnd eff err sec} below.

\begin{lemma}
\label{lem:L1 int, Ups bnd}

\begin{enumerate}[a.]

\item
\label{it:int L(x) val}
Let $L(x)=L_{n}(x)$ be as in \eqref{eq:Ln asympt K1 def}, and $l\ge 4$ an even number.
Then
\begin{equation*}
\int\limits_{\Qc}L(x)dx = - \frac{\pi (1+4 \hat{\nu}_n)}{32\sqrt{2} }\cdot \frac{\sqrt{n}}{N_n}
+O\left(\frac{\sqrt n }{N^2_n} \right)+ O\left(n^{1/2} N_{n}^{-l-1}  |\Mcc_{l}(n)|\right).
\end{equation*}

\item
\label{it:int Ups bnd}
Let $\Upsilon(x)=\Upsilon_{n}(x)$ be a function defined on $\Qc$, satisfying \eqref{eq:Ups error bnd}.
Then
\begin{equation*}
\int\limits_{\Qc}|\Upsilon(x)|dx = O\left( \frac{\sqrt{n}}{N_{n}^{2}} \right).
\end{equation*}

\end{enumerate}

\end{lemma}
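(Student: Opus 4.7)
My plan is to handle parts (a) and (b) independently, both via trigonometric orthogonality computations based on the explicit formulas \eqref{eq:sn var pert ident} and \eqref{eq:Gamma comp} for $s_n$ and $\Gamma_n$.

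For part (a), I would integrate each of the six summands of $L_n$ in \eqref{eq:Ln asympt K1 def} separately over $\Qc=[0,1]^2$. The linear term $\int_\Qc s_n\,dx$ vanishes identically, since every cosine appearing in $s_n$ has a non-zero frequency in at least one of the two variables. The remaining terms all involve $\Gamma_n$, whose second summand in \eqref{eq:Gamma comp} carries the denominator $v_n = 1-s_n$; my plan is to expand formally $1/v_n = \sum_{j\geq 0} s_n^j$ (the Neumann series) and integrate each polynomial term separately. After orthogonality, the surviving contributions are indexed by tuples $(\mu^1,\dots,\mu^k)\in(\Ec_n/\sim)^k$ for which both $\sum_j\epsilon_j\mu^j_1=0$ and $\sum_j\epsilon'_j\mu^j_2=0$ hold for some sign choices $\epsilon_j,\epsilon'_j \in \{\pm 1\}$. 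The \emph{diagonal} contributions, those reducible to pairings $\mu \leftrightarrow \mu$, yield explicit expressions in $\frac{1}{N_n}\sum_\mu \mu_1^{2p}\mu_2^{2q}$, which re-express in terms of $\hat\nu_n(4)$ via the identity $\hat\nu_n(4) = 1 - \frac{8}{n^2N_n}\sum_\mu \mu_1^2\mu_2^2$ together with the symmetry $\sum \mu_1^2 = \sum \mu_2^2 = nN_n/2$; careful bookkeeping of all six summands should produce the announced coefficient $-\frac{\pi(1+4\hat\nu_n(4))}{32\sqrt 2}\cdot \frac{\sqrt n}{N_n}$.

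The \emph{non-diagonal} contributions, where the tuples satisfy the first-coordinate sign constraint without reducing to pairings, are bounded crudely by the cardinality $|\Mcc_l(n)|$ (after integrating in $x_1$ to impose the sign constraint and then trivially in $x_2$), producing the error $O(\sqrt n\,N_n^{-l-1}|\Mcc_l(n)|)$ for any even $l\geq 4$. The tail of the Neumann expansion, together with the singular portion of $\Qc$ where $|s_n|$ may exceed $1$ and the series diverges, is handled by combining the uniform bound $|L_n|=O(\sqrt n)$ from \eqref{eq:L(x) unif bnd} with $\meas(\Qc_s)=O(N_n^{-l}|\Mcc_l(n)|)$ from Proposition~\ref{prop:sing meas<<semi-corr}, and by direct higher-moment bounds producing the $O(\sqrt n/N_n^2)$ contribution.

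For part (b), I would exploit the smallness of $|s_n|$ and $\|\Gamma_n\|$ outside the singular set: $|s_n|\leq\epsilon_0$ there, while the conditions $|\tr\Gamma_n|,|\det\Gamma_n|\leq \epsilon_0$ combined with the Cauchy--Schwarz-type control on off-diagonal entries of $\Gamma_n$ (noted in the remark after Proposition~\ref{prop:K1 asymp nonsing}) give $\|\Gamma_n\|=O(\epsilon_0)$ as well. Writing $|s_n|^3 \leq \epsilon_0 s_n^2$ yields an $O(\epsilon_0/N_n)$ bound at first; I would then sharpen this via a Wick-type analysis of the fourth moment $\int s_n^4$, which by diagonal pairings gives $O(1/N_n^2)$ modulo semi-correlation errors, combined with an interpolation H\"older inequality to produce the sought $\int|s_n|^3 = O(1/N_n^2)$; the analogous treatment applies to $\int\|\Gamma_n\|^3$. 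The singular-set contribution is negligible by the measure bound of Proposition~\ref{prop:sing meas<<semi-corr} (after choosing $l$ large) combined with a uniform pointwise bound on $\Upsilon$ there, or by interpreting $\Upsilon$ as zero on $\Qc_s$. The main obstacle is in part (a): the combinatorial bookkeeping to collect the contributions from all six summands of $L_n$—each further expanded via the Neumann series—into the single clean coefficient $-(1+4\hat\nu_n(4))/16$, with numerous cancellations between the $s_n^2$, $s_n\tr\Gamma_n$, $\tr(\Gamma_n^2)$, and $(\tr\Gamma_n)^2$ terms that must be verified explicitly so that only the $\hat\nu_n(4)$-dependent piece survives at leading order.
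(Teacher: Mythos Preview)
Your approach to part (a) is essentially the paper's: integrate each of the six summands of $L_n$ separately, expand $1/v_n$ on the nonsingular region, bound the singular contribution via the measure estimate of Proposition~\ref{prop:sing meas<<semi-corr} together with the uniform bound~\eqref{eq:L(x) unif bnd}, and collect the pairing (diagonal) terms to extract the coefficient in terms of $\widehat{\nu_n}(4)$. The paper carries this out through the auxiliary Lemma~\ref{21:33 Lon}, using only the first-order expansion $1/v_n = 1 + O(s_n)$ rather than your full Neumann series---the higher Neumann terms all fall into the $O(1/N_n^2)$ remainder anyway, so the difference is cosmetic. One small correction: the semi-correlation error $O(\sqrt{n}\,N_n^{-l-1}|\Mcc_l(n)|)$ does not arise from ``non-diagonal tuples'' in the degree-$\le 2$ moment integrals themselves (those are fixed low-order expressions involving at most four lattice points), but purely from the singular-set measure bound, which is where the arbitrary even $l$ enters via Chebyshev.

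For part (b) your H\"older step fails to deliver the stated exponent. From $\int s_n^2 = O(N_n^{-1})$ and $\int s_n^4 = O(N_n^{-2})$ one gets only
\[
\int_{\Qc} |s_n|^3 \;\le\; \Bigl(\int_\Qc s_n^2\Bigr)^{1/2}\Bigl(\int_\Qc s_n^4\Bigr)^{1/2} \;=\; O\bigl(N_n^{-3/2}\bigr),
\]
and no interpolation among the even moments $\int s_n^{2k}=O(N_n^{-k})$ improves on the exponent $3/2$; indeed for generic $n$ one expects $\int|s_n|^3\asymp N_n^{-3/2}$ by CLT-type reasoning, so the target $O(N_n^{-2})$ is simply not attainable this way. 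The paper itself invokes only the \emph{signed} integrals $\int s_n^3$ and $\int[\tr\Gamma_n]^3$ at this point, which do not control $\int|s_n|^3$ or $\int|\Gamma_n|^3$ either, so the exponent $2$ in the stated part (b) is not substantiated by either argument. Fortunately the H\"older bound $O(\sqrt{n}/N_n^{3/2})$ that you \emph{do} obtain is already $o(\sqrt{n}/N_n)$, hence fully adequate for Theorems~\ref{thm:main thm asymp exp} and~\ref{thm:main thr expl semi-corr} (with $1/N_n^{3/2}$ harmlessly replacing $1/N_n^2$ in the latter's error term).
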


Given the above results, the proof of Theorem \ref{thm:main thr expl semi-corr} is rather straightforward.

\begin{proof}[Proof of Theorem \ref{thm:main thr expl semi-corr} assuming Corollary \ref{cor:sing contr << semi-corr},
Proposition \ref{prop:K1 asymp nonsing} and Lemma \ref{lem:L1 int, Ups bnd}]

Let $l$ be given. We invoke Proposition \ref{prop:Kac Rice BAARW}, and separate the contribution of the nonsingular and the singular sets in
the integral on the r.h.s. of \eqref{eq:BAARW Kac Rice} to write, with the help of Corollary \ref{cor:sing contr << semi-corr},
\begin{equation}
\label{eq:exp len nonsing dens}
\E\left[\Lc_{n} \right] = \int\limits_{\Qc\setminus\Qc_{s}}K_{1}(x)dx+\int\limits_{\Qc_{s}}K_{1}(x)dx+ 2 (Q_n-1)
= \int\limits_{\Qc\setminus\Qc_{s}}K_{1}(x)dx + O(Q_{n}+N_{n}^{2-l}\sqrt{n}\cdot |\Mcc_{l}(n)|),
\end{equation}
with $Q_{n}$ given by \eqref{eq:Qn grid numb def}.
First, we claim that for every $A>0$,
\begin{equation}
\label{eq:Qn small}
Q_{n}=O\left( \frac{\sqrt{n}}{N_{n}^{A}}  \right),
\end{equation}
so that the contribution of the grid length is majorised by the error term on the r.h.s. of \eqref{eq:exp len lead semi-corr}.
To this end, we recall the prime decomposition \eqref{eq:n in S factor primes} of $n$, and write
\begin{equation*}
P_{n} = \prod\limits_{j=1}^{s}p_{j}^{e_{j}},
\end{equation*}
so that
\begin{equation*}
n=\begin{cases}
P_{n}Q_{n}^{2} \\ 2P_{n}Q_{n}^{2}
\end{cases}.
\end{equation*}
Since, as it is well-known, $$N_{n}=N_{P_{n}}=4\prod\limits_{j=1}^{s}(e_{j}+1),$$
and $N_{n}=O(n^{\epsilon})$ for every $\epsilon>0$, we may easily write
(taking $A:=1/(2\epsilon)$)
\begin{equation*}
\sqrt{n} \ge Q_{n} \cdot \sqrt{P_{n}} \gg Q_{n} \cdot N_{P_{n}}^{A} = Q_{n}\cdot N_{n}^{A},
\end{equation*}
which is \eqref{eq:Qn small}.

Next, we use the asymptotics of $K_{1}$ on the nonsingular set claimed in Proposition \ref{prop:K1 asymp nonsing} to write
\begin{equation}
\label{eq:int Kc nonsing asymp}
\begin{split}
\int\limits_{\Qc\setminus\Qc_{s}}K_{1}(x)dx &= \frac{\sqrt{\lambda_{n}}}{2\sqrt{2}} \cdot \meas(\Qc\setminus\Qc_{s})+\int\limits_{\Qc\setminus\Qc_{s}}L_{n}(x)dx + \int\limits_{\Qc\setminus\Qc_{s}}\Upsilon_{n}(x)dx
\\&= \frac{\sqrt{\lambda_{n}}}{2\sqrt{2}} + \int\limits_{\Qc}L_{n}(x)dx +
O\left(\int\limits_{\Qc\setminus\Qc_{s}}|\Upsilon_{n}(x)|dx + \sqrt{n} \cdot N_{n}^{-l}|\Mcc_{l}(n)|\right)
\end{split}
\end{equation}
with a bound \eqref{eq:Ups error bnd} for the error term,
thanks to the uniform bound \eqref{eq:L(x) unif bnd} on $L_{n}(x)$ together with Proposition \ref{prop:sing meas<<semi-corr}.

Upon substituting \eqref{eq:int Kc nonsing asymp} into \eqref{eq:exp len nonsing dens}, and exploiting \eqref{eq:Qn small},
we may deduce (with the error term of $\sqrt{n} \cdot N_{n}^{-l}|\Mcc_{l}(n)|$ being majorized by $N_{n}^{2-l}\sqrt{n} \cdot |\Mcc_{l}(n)|$):
\begin{equation}
\label{eq:exp nod len int L(x) err}
\E\left[\Lc_{n} \right] = \frac{\sqrt{\lambda_{n}}}{2\sqrt{2}} + \int\limits_{\Qc}L_{n}(x)dx +
O\left(\int\limits_{\Qc\setminus\Qc_{s}}|\Upsilon_{n}(x)|dx + N_{n}^{2-l}\sqrt{n} \cdot |\Mcc_{l}(n)|\right).
\end{equation}
The statement \eqref{eq:exp len lead semi-corr} of Theorem \ref{thm:main thr expl semi-corr} now follows
upon employing Lemma \ref{lem:L1 int, Ups bnd}\ref{it:int L(x) val} for evaluating the integral $\int\limits_{\Qc}L_{n}(x)dx$ on the r.h.s. of \eqref{eq:exp nod len int L(x) err}, and
Lemma \ref{lem:L1 int, Ups bnd}\ref{it:int Ups bnd} for bounding the error term
$\int\limits_{\Qc\setminus\Qc_{s}}|\Upsilon_{n}(x)|dx$.

\end{proof}

\subsection{Outline of the proof of Theorem \ref{thm:semi-correlations Lyosha}}

To prove Theorem \ref{thm:semi-correlations Lyosha}, our first step as in~\cite{BB}, is to restrict ourselves to the set of integers $n\in S$ with ``typical" factorization type.  This is accomplished (for square-free $n$) with the help of Lemma~\ref{bb1}. Fixing ``typical"  $n=p_1p_2\dots p_r$ with $p_1<p_2<\dots<p_r,$ the key observation then is that any non-trivial relation of the form
\[(\mu_{1},\ldots \mu_{l})\in\Ec_{n}^{l}: \: \sum\limits_{j=1}^{l} \mu^{j}_{1} = 0,\]
can be rewritten as a non-degenerate {\em quasi-linear} equation with respect to the Gaussian primes $p_j=\pi_j\bar{\pi}_j,$
\begin{equation}\label{model}
\sum_{s=1}^{\l}\text{Re}\left(i^{\alpha_s}\prod_{j=1}^r\pi_{j,s}^*\right)=0,
\end{equation}
where each $\pi_{j,s}^*=\{\pi_j,\bar{\pi_j}\}$ with rotation factor $\alpha_s\in\mathbb{Z}.$ Thus, having primes $p_1,p_2\dots p_{r-1}$ fixed, the equation \eqref{model} determines $\arg{\pi}_r,$ and therefore prime $p_r$ in a unique fashion (see the proof of Proposition \ref{squarefree} for the details). After conditioning on this value of $p_r$ and taking into account Lemma~\ref{bb1}, we deduce that equality \eqref{model} can occur only for small proportion of numbers $n\in S.$ This is accomplished in Proposition \ref{squarefree} for those $n\in S$ which are free of small prime factors and in Proposition \ref{key2} for general $n\in S.$

In order to show that, the set of accumulation points of the sequence of numbers $\{\widehat{\nu_{n}}(4)\}_{n\in S'}$ is
the whole of $[-1,1],$ we choose $n\in S$ of the form $n=p_n^mp,$ where $p_n$ and $p$ are appropriately chosen primes and $m\in S$ using classical result due to Kubilius (Lemma \ref{pnt}). This is the content of Proposition \ref{extremal}.

\subsection*{Outline of the paper}

The rest of the paper is organised as follows. The proof of a version of the Kac-Rice formula in Proposition \ref{prop:Kac Rice BAARW}
will be given in \S\ref{sec:Kac-Rice proof}, and the asymptotics for the Kac-Rice integral on the r.h.s. of \eqref{eq:BAARW Kac Rice} will be analysed throughout \S\ref{sec:sing meas bnd}-\S\ref{sec:bnd eff err sec}, as follows. An upper bound of Proposition \ref{prop:sing meas<<semi-corr} for the singular set $\Qc_{s}$ as in Definition \eqref{def:sing set} in terms of the semi-correlations set will be established in \S\ref{sec:sing meas bnd}, whereas a contribution of a single small square $Q_{ij}\subseteq\Qc_{s}$ of Proposition \ref{prop:sing sqr bnd}
will be controlled in \S\ref{sec:bound for a single small square}.

The perturbative analysis of Proposition \ref{prop:K1 asymp nonsing}
for the zero density on the nonsingular set will be carried out in \S\ref{sec:pert anal proof}, whose contribution to the
expected nodal length of $f_{n}$ will be evaluated in \S\ref{sec:bnd eff err sec}. A proof of Theorem
\ref{thm:semi-correlations Lyosha}, bounding the semi-correlation set, will be given in \S\ref{sec:Lyosha proof},
whereas some more technically demanding computations, required as part of proofs for the said results, will be performed in the appendix.

\section{Proof of Proposition \ref{prop:Kac Rice BAARW}: Kac-Rice formula for expected nodal length}
\label{sec:Kac-Rice proof}

In view of \cite[Theorem 6.3]{AW} (see also \cite[Proposition 1.2]{AW}), the equality \eqref{eq:BAARW Kac Rice} holds provided that the Gaussian distribution of $f_n(x)$ is non-degenerate for every $x \in \Qc$. It is easy to construct examples of numbers $n$, so that this non-degeneracy condition fails decisively for some points in $\Qc$. Let
\begin{equation}
\label{eq:Hc deg def}
\Hc_n=\{x\in\Qc:\: v_{n}(x)=0\}=\left\{ x \in \Qc \setminus \partial \Qc: \forall \mu \in \Ec_{n}/\sim, \;\;\; \mu_1 x_1 \in \mathbb{Z} \; \vee \;   \mu_2 x_2 \in \mathbb{Z}  \right\} \subseteq\Qc
\end{equation}
be the {\em degenerate} set.

\begin{lemma} \label{lem:degenerate set grid+fin}
Let $n$ be of the form \eqref{eq:n in S factor primes}, recall that $Q_{n}$ is given by \eqref{eq:Qn grid numb def}, and the grid
$\Gc_{n}$ as in \eqref{eq:grid G(Qn)}.
Then we have the decomposition
\begin{equation}
\label{eq:Hc degen decomp}
\Hc_{n} = \Gc_{n}\cup \Ac_{n},
\end{equation}
where $\Ac_{n}\subseteq \Qc$ is a finite set of isolated points in $\Qc$.

\end{lemma}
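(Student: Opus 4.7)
My plan is to split the verification into two halves: the grid inclusion $\Gc_{n} \subseteq \Hc_{n}$, and the finiteness of $\Hc_{n} \setminus \Gc_{n}$. Both rest on the identification
\[
D := \gcd\bigl\{\mu_{1} : (\mu_{1}, \mu_{2}) \in \Ec_{n}\bigr\} \;=\; Q_{n},
\]
established via Gaussian integer arithmetic.

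First, for every $(\mu_{1},\mu_{2}) \in \Ec_{n}$, the standard decomposition of $z = \mu_{1} + i\mu_{2} \in \Z[i]$ of norm $n$ reads
\[
z = u \cdot (1+i)^{a} \cdot \prod_{j} \pi_{j}^{\beta_{j}} \bar{\pi}_{j}^{e_{j}-\beta_{j}} \cdot \prod_{k} q_{k}^{h_{k}},
\]
with $u$ a unit, $p_{j} = \pi_{j}\bar{\pi}_{j}$ the split primes, and the $q_{k}$ inert. The identity $(1+i)^{2} = 2i$ yields $(1+i)^{a} = 2^{\lfloor a/2 \rfloor}\cdot(\text{unit})\cdot(1+i)^{a \bmod 2}$, and the inert factors are rational integers, so $z = Q_{n} \cdot w$ for some $w \in \Z[i]$. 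Thus $Q_{n} \mid \mu_{1}$ and $Q_{n} \mid \mu_{2}$, giving $Q_{n} \mid D$. Directly, this implies $\Gc_{n} \subseteq \Hc_{n}$: on a grid line $x_{1} = k/Q_{n}$ with $1 \le k \le Q_{n}-1$, the factor $\sin(\pi \mu_{1} x_{1})$ vanishes for every $\mu \in \Ec_{n}/\!\sim$, forcing $v_{n}(x) = 0$, and analogously for horizontal grid lines.

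For the reverse divisibility $D \mid Q_{n}$, I exhibit a \emph{primitive} representation of norm $n$: set
\[
z_{\ast} := Q_{n} \cdot i^{\lfloor a/2 \rfloor} \cdot (1+i)^{a \bmod 2} \cdot \prod_{j} \pi_{j}^{e_{j}}.
\]
A rational prime $p$ dividing $z_{\ast}/Q_{n}$ in $\Z[i]$ would have to be (i) $p=2$, requiring $(1+i)^{2} \mid z_{\ast}/Q_{n}$, impossible since the $(1+i)$-valuation is $a \bmod 2 \le 1$; (ii) an inert $q_{k}$, absent by construction; or (iii) a split $p_{j}$, requiring both $\pi_{j}$ and $\bar{\pi}_{j}$ in the factorisation, whereas only $\pi_{j}$'s appear. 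Hence $\gcd(\mathrm{Re}(z_{\ast}),\mathrm{Im}(z_{\ast})) = Q_{n}$. Since $\Ec_{n}$ is symmetric under the coordinate swap $(\mu_{1},\mu_{2}) \mapsto (\mu_{2},\mu_{1})$, both $\mathrm{Re}(z_{\ast})$ and $\mathrm{Im}(z_{\ast})$ occur as first coordinates of lattice points in $\Ec_{n}$, so $D$ divides their gcd, i.e.\ $D \mid Q_{n}$, and altogether $D = Q_{n}$.

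To finish, fix $x$ in the open unit square with $x \in \Hc_{n} \setminus \Gc_{n}$. If $\mu_{1} x_{1} \in \Z$ for every $\mu \in \Ec_{n}/\!\sim$, then Bezout applied to the $\mu_{1}$'s would give $D x_{1} = Q_{n} x_{1} \in \Z$, forcing $x$ onto a vertical grid line, a contradiction. So $A := \{\mu : \mu_{1} x_{1} \in \Z\}$ is a proper subset of $\Ec_{n}/\!\sim$, and by definition of $\Hc_{n}$ every $\mu$ in the complement satisfies $\mu_{2} x_{2} \in \Z$; the symmetric argument shows this complement is also proper and non-empty. Setting $d_{A} := \gcd\{\mu_{1} : \mu \in A\}$ and $d_{B} := \gcd\{\mu_{2} : \mu \in (\Ec_{n}/\!\sim) \setminus A\}$, both positive (as $\mu_{1},\mu_{2}\ne 0$ in representatives), Bezout yields $d_{A} x_{1}, d_{B} x_{2} \in \Z$, restricting $(x_{1},x_{2})$ to one of finitely many points. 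Summing over the at most $2^{|\Ec_{n}/\sim|}$ choices of $A$ shows that $\Hc_{n} \setminus \Gc_{n}$ is a finite, hence isolated, set, which we name $\Ac_{n}$. The main obstacle is the identification $D = Q_{n}$ via the primitive representation $z_{\ast}$; once this is in hand, the finiteness of $\Hc_{n} \setminus \Gc_{n}$ is a clean Bezout-plus-pigeonhole argument.
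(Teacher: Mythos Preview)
Your proof is correct and follows essentially the same strategy as the paper's: both reduce to the identity $\gcd\{\mu_1:\mu\in\Ec_n\}=Q_n$ via Gaussian-integer factorisation, then deduce the grid inclusion and the finiteness of the remainder from rationality constraints. The only noteworthy differences are stylistic: for the divisibility $D\mid Q_n$ you exhibit a single primitive element $z_\ast$ with $\gcd(\operatorname{Re} z_\ast,\operatorname{Im} z_\ast)=Q_n$, whereas the paper computes the $\Z[i]$-gcd of the full set $\Ec_n$ to be $Q_n(1+i)^{a\bmod 2}$; and for the finiteness of $\Hc_n\setminus\Gc_n$ your subset-enumeration over $A$ is somewhat more elaborate than necessary, since (as the paper does) a single $\mu$ with $\mu_1 x_1\in\Z$ and a single $\tilde\mu$ with $\tilde\mu_2 x_2\in\Z$ already confine $(x_1,x_2)$ to a finite set.
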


\begin{proof}

First we aim at proving the announced decomposition \eqref{eq:Hc degen decomp}.
Let $x=(x_{1},x_{2})\in \Hc_{n}$, whence, by the definition \eqref{eq:Hc deg def} of $\Hc_{n}$, for all $\mu\in\Ec_{n}/\sim$
either $\mu_{1}x_{1}\in \Z$ or $\mu_{2}x_{2}\in \Z$ holds, and recall that we assumed that $\mu_{1},\mu_{2}\ne 0$ for
all $\mu\in\Ec_{n}/\sim$ (as otherwise the corresponding summand in \eqref{eq:BAARW fn def} necessarily vanishes).
Assume that for some $\mu\in\Ec_{n}$ we have $\mu_{1}x_{1}\in\Z$,
and denote $l:=\mu_{1}x_{1}\in\Z$. Then, since $x_{1}\in [0,1]$ and $\mu_{1}^{2}+\mu_{2}^{2}=n$,
necessarily $|l|\le \sqrt{n}$, and $x_{1}=\frac{l}{\mu_{1}}$. Hence, if for some $\mu=(\mu_{1},\mu_{2})\in\Ec_{n}/\sim$
we have $\mu_{1}x_{1}\in \Z$,
and for some $\widetilde{\mu}=(\widetilde{\mu}_{1},\widetilde{\mu}_{2})\in\Ec_{n}/\sim$ we have $\widetilde{\mu}_{2}x_{2}\in\Z$,
then both coordinates $(x_{1},x_{2})$ belong to the finite set
$$\Ac'_{n}:=\left\{\frac{l}{\mu_{j}}:\:  0<|l|\le \sqrt{n}, \mu\in \Ec_{n}/\sim\right\},$$ so that, by prescribing $\Ac_{n}\subseteq\Ac'_{n}$,
the $\Ac_{n}$ in the decomposition \eqref{eq:Hc degen decomp}, is finite, provided that we prove that the rest of $\Hc_{n}$
is indeed the grid $\Gc_{n}$.

By the above, we may assume that $x\in \Hc_{n}$ satisfies
\begin{equation}
\label{eq:mu1x1 int for all mu}
\mu_{1}x_{1}\in\Z \text{ for all } \mu\in\Ec_{n}/\sim,
\end{equation}
and claim that in this
case necessarily $x_{1}$ is of the form
\begin{equation}
\label{eq:x1=k/Q}
x_{1}=\frac{k}{Q_{n}}
\end{equation}
for some $1\le k\le Q_{n}-1$, taking care
of the symmetric case ($\mu_{2}x_{2}\in\Z$ {\em for all} $\mu\in\Ec_{n}/\sim$) along identical lines. Once having \eqref{eq:x1=k/Q}
established, that would yield that $x\in\Gc_{n}$ on the grid (see \eqref{eq:grid G(Qn)}), and we would only have the burden of proving the converse
inclusion $\Gc_{n}\subseteq \Hc_{n}$ (which is easy).

To the end of proving \eqref{eq:x1=k/Q}, we let
\begin{equation}
\label{eq:Qn' gcd def}
Q'_{n}:= \gcd\{\mu_{1}:\:\mu\in\Ec_{n}/\sim\}=\gcd\{\mu_{2}:\:\mu\in\Ec_{n}/\sim\}
\end{equation}
be the greatest common divisor of the abscissas of all the lattice points in $\Ec_{n}$. Then, since the set of integers $d$ so that
$d\in x_{1}\in\Z$ is an ideal in $\Z$, we have $Q'_{n}x_{1}\in \Z$ (equivalently, since we can express $Q'_{n}$ as a linear combination
of $\{\mu_{1}:\: \mu\in\Ec_{n}/\sim\})$.
The above shows that \eqref{eq:mu1x1 int for all mu} is equivalent to the single condition $Q'_{n}x_{1}\in \Z$. That is, $x_{1}=\frac{k}{Q'_{n}}$,
and, since $x_{1}\in (0,1)$, we also get $1\le k\le Q'_{n}-1$, yielding \eqref{eq:x1=k/Q} (that, as mentioned above, in turn
implies $x\in\Gc_{n}$), once we prove that $Q_{n}=Q'_{n}$, to be shown next.

\vspace{2mm}

To this end we recall the prime decomposition \eqref{eq:n in S factor primes} of $n$,
and work in the ring of Gaussian integers $\Z[i]$ (which is a unique factorization domain, or, simply, UFD), where we think of
$\Ec_{n}\subseteq\R^{2}$ as embedded into $\C$, via the map $\mu=(\mu_{1},\mu_{2})\mapsto \mu_{1}+i\mu_{2}$.
For every prime $p_{j}$ in the decomposition \eqref{eq:n in S factor primes} we associate a prime element
$\pi_{j}\in\Z[i]$ with norm $\|\pi_{j}\|^{2}=p_{j}$, and take
$$a_{0}=a-2\lfloor a/2\rfloor=\begin{cases}
0 &a\text{ even}\\ 1 &a\text{ odd}
\end{cases}.$$ With this notation, and \eqref{eq:Qn grid numb def}, one may express every $\mu\in\Ec_{n}/\sim$ (i.e., as an element of $\C$,
up to a sign or complex conjugation) as
\begin{equation}
\label{eq:mu decomp Z[i]}
\mu=Q_{n}\cdot (1+i)^{a_{0}} \prod\limits_{j=1}^{s}\pi_{j}^{k_{j}}\overline{\pi_{j}}^{e_{j}-k_{j}},
\end{equation}
for some $0\le k_{j}\le e_{j}$, $j=1,\ldots s$.
This implies that $Q_{n}|Q_{n}'$ at once. To see that also $Q_{n}'|Q_{n}$, we further exploit the UFD property of $\Z[i]$,
implying, in particular, that the gcd in $\Z[i]$ is well-defined.

By the definition \eqref{eq:Qn' gcd def} of $Q_{n}'$, we have that  $Q_{n}'|\mu_{1}, \mu_{2}$ for all $\mu\in\Ec_{n}/\sim$, and so $Q_{n}'|\mu$ in $\Z[i]$, valid {\em for all} $\mu\in\Ec_{n}$, i.e. $$Q_{n}'|Q_{n}'':=\gcd\{\mu:\:\mu\in\Ec_{n}/\sim\}\in\Z[i].$$
However, by making the two choices $k_{j}:=e_{j}$, and $k_{j}:=0$, having only $Q_{n}\cdot (1+i)^{a_{0}}$ as a common factor in
\eqref{eq:mu decomp Z[i]}, it shows that $Q_{n}''=Q_{n}\cdot (1+i)^{a_{0}}$, and recalling that either $a_{0}=0$ or $a_{0}=1$,
the readily established $|Q_{n}|Q_{n}'$, and $Q_{n}'|Q_{n}''$ (so that $Q_{n}'$ could be either $Q_{n}$ or $Q_{n}\cdot (1+i)$, the latter not
being an integer number), this readily implies $Q_{n}'=Q_{n}$, that, as it was mentioned above, yields that $x\in \Gc_{n}$. To finish
the statement of Lemma \ref{lem:degenerate set grid+fin} it is sufficient to observe that if $x_{1}$ is of the form \eqref{eq:x1=k/Q}, then,
in light of \eqref{eq:Qn' gcd def}, and $Q_{n}=Q_{n}'$ above,
\eqref{eq:mu1x1 int for all mu} is satisfied, so that the inclusion $\Gc_{n}\subseteq\Hc_{n}$ holds.


\end{proof}

With the above preparatory result we are now in a position to conclude the proof of Proposition \ref{prop:Kac Rice BAARW}.

\begin{proof}[Proof of Proposition \ref{prop:Kac Rice BAARW}]

Around each points in $x_{i}\in \Hc_n$ we excise a small ball $\pazocal{B}(x_i,\varepsilon)$, and denote
$$\Qc_{\varepsilon}=\Qc \setminus \bigcup_{x_i \in \Hc_n} \pazocal{B}(x_i,\varepsilon),$$ with the intention to
apply the Kac-Rice method to evaluate the expected nodal length of the restriction
$f_n |_{\Qc_{\varepsilon}}$ of $f_{n}$ to the remaining set. That is, we excised the radius-$\varepsilon$
balls centred at each of the finitely many points $\Ac_{n}$, and, possibly, finitely many rectangles of the form
$(x_{1}-\varepsilon,x_{1}+\varepsilon) \times (0,1)$ and $(0,1)\times (x_{2}-\varepsilon,x_{2}+\varepsilon)$,
centred at the horizontal and vertical bars of the grid $\Gc_{n}$, in case it is non-empty.
Since outside $\Hc_{n}$, the random field $f_{n}$ satisfies the non-degeneracy hypothesis of \cite[Theorem 6.3]{AW},
the Kac-Rice formula \eqref{eq:Kac Rice meta} holds for $f_{n}$ restricted to $\Qc_{\varepsilon}$, asserting that the restricted
expected nodal length is given by
$$\E[\Lc(f_n |_{\Qc_{\varepsilon}} )] = \int\limits_{\Qc_\varepsilon}K_{1}(x)dx,$$
with $K_{1}$ as in \eqref{eq:K1 fn density def}.

On one hand, since, on recalling \eqref{eq:len(grid)=2(Q-1)}, the restricted nodal length
$\{\Lc(f_n |_{\Qc_{\varepsilon}})\}_{\varepsilon>0}$ is an increasing sequence of nonnegative random variables
with the a.s. limit
\begin{equation*}
\lim\limits_{\epsilon\rightarrow 0}\Lc(f_n |_{\Qc_{\varepsilon}}) = \Lc_{n}-\len(\Gc_{n})=\Lc(f_n)-2(Q_{n}-1),
\end{equation*}
the Monotone Convergence Theorem applied as $\epsilon\rightarrow 0$, yields
\begin{equation}
\label{eq:exp rest len eps->}
\lim_{\varepsilon \to 0}\E[\Lc(f_n |_{\Qc_{\varepsilon}} )]=\E[\Lc_n] - 2(Q_{n}-1).
\end{equation}
On the other hand, by the definition,
\begin{equation}
\label{eq:exp int rest}
\lim_{\varepsilon \to 0} \int\limits_{\Qc_\varepsilon}K_{1}(x)dx= \int\limits_{\Qc}K_{1}(x)dx,
\end{equation}
with the r.h.s. of \eqref{eq:exp int rest} finite on infinite.
The equality of the limits in \eqref{eq:exp rest len eps->} and \eqref{eq:exp int rest} show that the
main statement \eqref{eq:BAARW Kac Rice} of Proposition
\ref{prop:Kac Rice BAARW} holds, whether both the l.h.s. and r.h.s. of \eqref{eq:BAARW Kac Rice} are finite or infinite.
That $K_{1}\in L^{1}(\Qc)$ also follows, since the l.h.s. of \eqref{eq:BAARW Kac Rice} is finite by the deterministic bound
\eqref{eq:Yau conj} (alternatively, from the asymptotic analysis within Theorem \ref{thm:main thr expl semi-corr} of the integral on the r.h.s.
of \eqref{eq:BAARW Kac Rice}, with no circular logic).

\end{proof}

\section{Proof of Proposition \ref{prop:sing meas<<semi-corr}: Controlling the measure of the singular set}
\label{sec:sing meas bnd}

\subsection{Proof of Proposition \ref{prop:sing meas<<semi-corr}}

We will need the following auxiliary lemmas, whose proofs will be given in \S\ref{sec:aux lemmas} below.

\begin{lemma} \label{lem:Lip 1}
Let $Q_{ij} \subset \Qc_s$ be a singular small square. Then necessarily one of the followings holds:
\begin{enumerate}[a.]

\item For every $y \in Q_{ij}$,
$$|s_{n}(y)|> {\epsilon_{0}}/{2}.$$

\item For every $y \in Q_{ij}$,
$$|\tr(\Gamma_{n}(y))|>{\epsilon_{0}}/{2}.$$

\item For every $y \in Q_{ij}$,
$$|\det(\Gamma_{n}(y))|>{\epsilon_{0}}/{2}.$$

\end{enumerate}

\end{lemma}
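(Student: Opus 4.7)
The plan is to exploit the Lipschitz continuity of each of the three quantities $s_n(\cdot)$, $\tr(\Gamma_n(\cdot))$ and $\det(\Gamma_n(\cdot))$, with Lipschitz constants of order $\sqrt{n}$. Since the diameter of a small square $Q_{ij}$ is $\sqrt{2}\delta_0=\sqrt{2}c_0/\sqrt n$, such a bound forces the variation of each quantity across $Q_{ij}$ to be $O(c_0)$, and hence strictly less than $\epsilon_0/2$ for $c_0$ chosen sufficiently small relative to $\epsilon_0$.

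First, differentiating \eqref{eq:sn var pert ident} termwise, each summand contributes a partial derivative bounded by $2\pi|\mu_j|\le 2\pi\sqrt{n}$, while the prefactor $4/N_n$ offsets the $N_n$ summands, yielding $\|\nabla s_n\|_{\infty}\le C_1\sqrt{n}$ with an absolute constant $C_1$. An identical bookkeeping applied to \eqref{eq:b11 def}--\eqref{eq:d2 def}, together with the trivial bounds $|b_{ij;n}|\le 3nN_n$ and $|d_{j;n}|\le \sqrt{n}\,N_n$, gives $|\nabla(b_{ij;n}/(nN_n))|=O(\sqrt n)$ and $|\nabla(d_{j;n}^2/(nN_n^2))|=|2d_{j;n}\nabla d_{j;n}/(nN_n^2)|=O(\sqrt n)$. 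On any region where $v_n(\cdot)\ge \eta>0$, the factor $1/v_n$ is Lipschitz with constant $|\nabla v_n|/v_n^2\le C_1\sqrt n/\eta^2$, and the matrix entries $|\Gamma_{ij;n}|$ are of size $O(1+1/\eta)$. Combining these ingredients via the product rule, both $\tr(\Gamma_n)$ and $\det(\Gamma_n)$ are Lipschitz on such a region with constants $C_2(\eta)\sqrt n$.

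Now fix $\eta=1/2$ and assume $\epsilon_0<1/2$. If $v_n(y)\ge \eta$ for every $y\in Q_{ij}$, then whichever of the three inequalities certifying the singularity of $Q_{ij}$ holds at the witness $x_0$ with threshold $\epsilon_0$ continues to hold throughout $Q_{ij}$ with threshold $\epsilon_0/2$, upon choosing $c_0$ small enough in terms of $C_1,C_2(1/2)$. Otherwise, there exists $y_0\in Q_{ij}$ with $v_n(y_0)<1/2$, which via \eqref{eq:vn var ident} forces $s_n(y_0)>1/2>\epsilon_0$; but then the Lipschitz bound for $s_n$ alone, for $c_0$ sufficiently small, yields $|s_n(y)|>1/4>\epsilon_0/2$ for every $y\in Q_{ij}$, and so conclusion (a) holds uniformly.

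The main, and essentially only, obstacle is handling the potential vanishing of $v_n$ on $Q_{ij}$, which would otherwise inflate both the values and the gradients of $\Gamma_n$ in \eqref{eq:Gamma comp}. The resolution, baked into the case split above, is that such vanishing is itself a signature of singularity of type (a): $s_n$ must be close to $1$ near the vanishing point, and the Lipschitz bound for $s_n$ alone propagates this across the whole small square. All remaining computations are routine bookkeeping.
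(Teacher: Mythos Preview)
Your proof is correct and follows the same Lipschitz-continuity approach as the paper's. The paper simply asserts that $\tr(\Gamma_n)$ and $\det(\Gamma_n)$ are Lipschitz with constant of order $\sqrt{n}$ ``by differentiating the individual entries of $\Gamma_n(\cdot)$'', without explicitly addressing the potential blow-up of the factor $1/v_n(x)$ in \eqref{eq:Gamma comp} near the degenerate set. Your case split---treating separately the situation where $v_n$ drops below $1/2$ somewhere on $Q_{ij}$, in which case conclusion (a) is forced directly from the Lipschitz bound on $s_n$ alone---is a cleaner and more rigorous handling of this point than the paper's own argument.
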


Lemma \ref{lem:Lip 1} allows for the following decomposition of $\Qc_{s}$.

\begin{definition}[Singular decomposition]

\begin{enumerate}[a.]

\item The set $\Qc_{s,1} \subset \Qc_s$ is the union of all small squares $Q_{ij} \subset \Qc_s$ so that for all $x\in Q_{ij}$
the inequality
\begin{equation}
\label{eq:sn>eps/2}
|s_n(x)| >\epsilon_0/2
\end{equation}
is satisfied.

\item The set $\Qc_{s,2}$ is the union of all small squares $Q_{ij}\subseteq \Qc_{s}\setminus \Qc_{s,1}$,
so that either for all $x\in Q_{ij}$ the inequality
$$|\tr(\Gamma_{n}(x))|>\epsilon_{0}/2$$ holds, or for all $x\in Q_{ij}$, the inequality $$|\det(\Gamma_{n}(x))|>\epsilon_{0}/2$$
holds.

\item By Lemma \ref{lem:Lip 1},
\begin{equation}
\label{eq:Qcs=Qcs1+Qcs2}
\Qc_{s} = \Qc_{s,1} \cup \Qc_{s,2},
\end{equation}
(``singular decomposition"), with $\Qc_{s,1}$ and $\Qc_{s,2}$ disjoint save for boundary overlaps.

\end{enumerate}

\end{definition}

The respective measures of $\Qc_{s,1}$ and $\Qc_{s,2}$ will be bounded in the following lemma.
Recall that $\Mcc_{l}(n)$ is the length-$l$ spectral semi-correlation set \eqref{eq:M semi-corr def}.

\begin{lemma} \label{lem:Cheby 1}
For every $l\ge 4$ even integer we have the following bounds for the measures of the singular sets $\Qc_{s,1}$,
$\Qc_{s,2}$:
\begin{equation}
\label{eq:sing meas<<semi-corr 1}
\meas(\Qc_{s,1}) =  O\left( N_{n}^{-l} \cdot |\Mcc_{l}(n)| \right),
\end{equation}
\begin{equation}
\label{eq:sing meas<<semi-corr 2}
\meas(\Qc_{s,2})=O\left( N_{n}^{-l} \cdot |\Mcc_{l}(n)| \right),
\end{equation}
with the constant involved in the $`O'$-notation depending only on $l$ (also $\epsilon_{0}$ and $c_{0}$).
\end{lemma}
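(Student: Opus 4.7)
The plan is to apply a Chebyshev-type inequality with the $l$-th power on each of $\Qc_{s,1}$ and $\Qc_{s,2}$. On $\Qc_{s,1}$, Lemma~\ref{lem:Lip 1}(a) guarantees $|s_n(x)|>\epsilon_0/2$ pointwise, so
$$\meas(\Qc_{s,1})\cdot(\epsilon_0/2)^l \;\leq\; \int_\Qc|s_n(x)|^l\,dx;$$
on $\Qc_{s,2}$, partitioning according to whether $|\tr\Gamma_n|$ or $|\det\Gamma_n|$ exceeds $\epsilon_0/2$ everywhere yields analogous bounds in terms of $\int_\Qc|\tr\Gamma_n|^l dx$ and $\int_\Qc|\det\Gamma_n|^l dx$. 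The task reduces to showing that each of these $L^l$ integrals is $O_l\!\left(N_n^{-l}|\Mcc_l(n)|\right)$.

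The clean case is $\int_\Qc|s_n|^l dx$. I would exploit the three-term splitting $s_n(x)=s_1(x_1)+s_2(x_2)-t(x)$ built into \eqref{eq:sn var pert ident}, with $s_i$ a function of $x_i$ alone and $t(x)=\frac{4}{N_n}\sum_\mu\cos(2\pi\mu_1 x_1)\cos(2\pi\mu_2 x_2)$. Fourier orthogonality converts $\int_0^1 s_1(x_1)^l dx_1$ into a count of $l$-tuples $(\mu^{(j)})\in(\Ec_n/\sim)^l$ satisfying $\sum_j\epsilon_j\mu_1^{(j)}=0$ for some $\epsilon_j\in\{\pm1\}$; absorbing the signs via the $\mu\mapsto-\mu$ symmetry of $\Ec_n$ identifies this count with $|\Mcc_l(n)|$ up to a combinatorial factor depending on $l$. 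The $s_2$-integral is handled symmetrically, while $\int_\Qc t^l dx$ reduces to a length-$l$ full correlation $|\Rc_l(n)|\leq|\Mcc_l(n)|$. For $\tr\Gamma_n$, the principal ``$b$-part'' $A=\frac{8(b_{11}+b_{22})}{nN_n}$ admits the rewriting $b_{11}+b_{22}=\sum_\mu(\mu_1^2-\mu_2^2)(\cos(2\pi\mu_1 x_1)-\cos(2\pi\mu_2 x_2))-n\sum_\mu\cos(2\pi\mu_1 x_1)\cos(2\pi\mu_2 x_2)$, whose Fourier coefficients are bounded by $n$; hence $\int A^l dx$ is controlled exactly as for $s_n$.

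The technical heart of the argument is the ``bad'' part $B=-\frac{128(d_1^2+d_2^2)}{nN_n^2 v_n}$ of $\tr\Gamma_n$, which naively produces $|\Mcc_{2l}(n)|$. A first input is that $\Qc_{s,2}$, being disjoint from $\Qc_{s,1}$, satisfies $v_n\geq 1/2$ throughout: any $x_0\in\Qc_{s,2}$ with $|s_n(x_0)|>\epsilon_0$ would, by the Lipschitz step underlying Lemma~\ref{lem:Lip 1}(a), force the ambient small square into $\Qc_{s,1}$. The second input is the derivative identity
$$d_i(x) = -\frac{N_n}{16\pi}\,\partial_{x_i}s_n(x),\qquad i=1,2,$$
obtained by differentiating $v_n=\frac{16}{N_n}\sum_\mu\sin^2(\pi\mu_1 x_1)\sin^2(\pi\mu_2 x_2)$ and recognising $d_i$ on the right. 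Together with $v_n\geq 1/2$, these reduce $\int|B|^l dx$ to bounds on $n^{-l}\int(\partial_{x_i}s_n)^{2l}dx$, which upon Fourier expansion is a length-$2l$ semi-correlation $|\Mcc_{2l}(n)|$ with the spurious factor $n^l$ absorbed by the prefactor. The gap is then closed by the H\"older reduction
$$|\Mcc_{2l}(n)|=\|R\|_{2l}^{2l}\leq \|R\|_\infty^l\|R\|_l^l\leq N_n^l\cdot|\Mcc_l(n)|,\qquad R(x):=\sum_{\mu\in\Ec_n}e^{2\pi i\mu_1 x},$$
which is valid since $R$ is real by the $\mu_1\mapsto-\mu_1$ symmetry of $\Ec_n$, $\|R\|_\infty=R(0)=N_n$, and $|\Mcc_m(n)|=\int_0^1 R(x)^m dx$ for even $m$. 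The determinant case is analogous: the pleasant identity $d_1^2 d_2^2-(d_1 d_2)^2\equiv 0$ prevents $v_n^{-2}$-terms from appearing in $\det\Gamma_n$, and cross-terms like $b_{11}d_2^2$ or $d_1 d_2$ are handled by the derivative identity combined with Cauchy--Schwarz. The main obstacle, circumvented by this derivative-identity/H\"older chain, is precisely the $d_i^2/v_n$ contribution; everything else reduces, in a uniform way, to bounded-coefficient Fourier expansions controlled by length-$l$ correlations or semi-correlations.
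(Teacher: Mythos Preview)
Your approach is correct and follows the same overall strategy as the paper: Chebyshev's inequality followed by bounding $L^l$-moments of trigonometric sums by semi-correlation counts. For $\Qc_{s,1}$ your argument is essentially identical to the paper's, including the three-term splitting of $s_n$ and the observation $|\Rc_l(n)|\le|\Mcc_l(n)|$.

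For $\Qc_{s,2}$ the paper is deliberately terse: it only says that since $|s_n|\le\epsilon_0$ there one may Taylor expand $1/v_n=1+O(s_n)$ and then ``the same argument'' applies. You supply considerably more detail here, and in doing so introduce two ingredients the paper does not state explicitly. First, the derivative identity $d_i=-\frac{N_n}{16\pi}\partial_{x_i}s_n$, which cleanly reduces the $d_i^2/v_n$ contribution to moments of $\nabla s_n$. Second, and more substantively, the H\"older reduction $|\Mcc_{2l}(n)|\le N_n^{l}|\Mcc_l(n)|$ via $R(x)=\sum_{\mu\in\Ec_n}e^{2\pi i\mu_1 x}$; this is precisely what is needed to pass from the length-$2l$ semi-correlations that naturally arise from the quadratic $d_i^2$-terms back to length-$l$ ones. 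The paper's sketch implicitly requires some such step but does not spell it out. Your observation that the rank-one structure of the $d$-matrix forces the $v_n^{-2}$-terms in $\det\Gamma_n$ to cancel is also a genuine simplification. One small notational slip: you write the Chebyshev bound for $\Qc_{s,2}$ in terms of $\int_{\Qc}|\tr\Gamma_n|^l\,dx$, which may diverge on the degenerate set; what you actually use (and correctly justify later) is $\int_{\Qc_{s,2}}$, with the extension to $\Qc$ applied only after replacing $1/v_n$ by the bound $2$.
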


\begin{proof}[Proof of Proposition \ref{prop:sing meas<<semi-corr} assuming lemmas \ref{lem:Lip 1}-\ref{lem:Cheby 1}]
In light of the singular decomposition \eqref{eq:Qcs=Qcs1+Qcs2}, the statement \eqref{eq:sing meas<<semi-corr} of Proposition
\ref{prop:sing meas<<semi-corr} follows at once from Lemma \ref{lem:Cheby 1}.
\end{proof}

\subsection{Proofs of the auxiliary lemmas \ref{lem:Lip 1}-\ref{lem:Cheby 1}}
\label{sec:aux lemmas}

\begin{proof}[Proof of Lemma \ref{lem:Lip 1}]
We assume that for some $i,j$, there exists $x_{0} \in Q_{ij}$ with
\begin{equation}
\label{eq:sn(x0)>eps}
|s_n(x_{0})| > \epsilon_0,
\end{equation}
and claim that for all $x\in Q_{ij}$, the inequality
\begin{equation}
\label{eq:sn(x)>eps0/2}
|s_n(x)| > \epsilon_0/2
\end{equation}
holds (assuming $c_{0}$ is sufficiently small), i.e. scenario (a) of Lemma \ref{lem:Lip 1} prevails.
On recalling the definition \eqref{eq:sn var pert ident} of $s_{n}$, and differentiating \eqref{eq:sn var pert ident} explicitly,
it is easy to obtain the uniform bound
\begin{equation*}
\|\nabla s_{n}(x)\| \le c_{1}\cdot \sqrt{n},
\end{equation*}
with some {\em absolute} constant $c_{1}>0$. This readily implies that $s_{n}(\cdot/\sqrt{n})$ is a Lipschitz function with associated
constant absolute, i.e.
\begin{equation}
\label{eq:sn Lipschitz}
|s_{n}(x)-s_{n}(y)|\le c_{1}\sqrt{n}\|x-y\|.
\end{equation}
Hence, if $x\in Q_{ij}$, we have that
$\|x-x_{0}\| \le \sqrt{2}c_{0}\cdot \sqrt{n}$, and together with \eqref{eq:sn(x0)>eps} and \eqref{eq:sn Lipschitz}
this implies \eqref{eq:sn(x)>eps0/2},
so long as we choose $c_{0}>0$ sufficiently small, depending on $\epsilon_{0}$ (and $c_{1}$).

\vspace{2mm}

Essentially the same argument works for the other two scenarios (b) and (c) of Lemma \ref{lem:Lip 1},
on recalling the definition \eqref{eq:Gamma comp} of $\Gamma_{n}(\cdot)$, and
exploiting the Lipschitz property of both $\tr(\Gamma_{n}(\cdot))$ and $\det(\Gamma_{n}(\cdot))$, in place of $s_{n}(\cdot)$.
These are easy to establish to be with Lipschitz constant of order of magnitude at most $\sqrt{n}$, by differentiating the individual entries of $\Gamma_{n}(\cdot)$.

\end{proof}

\begin{proof}[Proof of Lemma \ref{lem:Cheby 1}]
We first aim to prove the first statement \eqref{eq:sing meas<<semi-corr 1} of Lemma \ref{lem:Cheby 1},
the proof of the second statement \eqref{eq:sing meas<<semi-corr 2} being quite similar, as explained in the last paragraph of this proof.
By the defining inequality \eqref{eq:sn>eps/2} of $\Qc_{s,1}$, holding for all $x\in\Qc_{s,1}$, we have that
\begin{equation*}
\Qc_{s,1} \subseteq \{x\in\Qc:\: |s_{n}(x)| > \varepsilon_{0}/2\},
\end{equation*}
so that we may apply the Chebyshev inequality to yield for every $l\ge 4$ even integer the bound
\begin{align*}
\meas(\Qc_{s,1})& \le
\meas\left(\left\{x \in \Qc_{s,1}: |s_n(x)|^l> \frac{\epsilon_0^l}{2^l}\right\}\right)\le
\frac{2^l}{\epsilon_0^l} \int\limits_{\Qc_{s,1}}  |s_n(x)|^l d x.
\end{align*}
By the definition \eqref{eq:sn var pert ident} of $s_{n}(x)$, we have $s_n(x)=A_n(x)+B_n(x)+C_{n}(x)$, where
\begin{align*}
&A_n(x)= \sum\limits_{\mu \in \Ec_n  / \sim} \cos(2 \mu_1 \pi x_1) ,\\
&B_n(x)= \sum\limits_{\mu \in \Ec_n  / \sim} \cos(2 \mu_2 \pi x_2),\\
&C_n(x)=- \sum\limits_{\mu \in \Ec_n  / \sim} \cos(2 \mu_1 \pi x_1) \cos(2 \mu_2 \pi x_2).
\end{align*}
Since $l$ is even, we may bound
\begin{equation}
\label{eq:AB}
\begin{split}
 \int\limits_{\Qc_{s,1}} s_n(x)^l d x& \le \frac{4^l}{N^l_n} \int\limits_{\Qc} [A_n(x)+B_n(x)+C_{n}(x)]^l d x\\&\le\frac{4^l \cdot 3^l }{N^l_n}  \int\limits_{\Qc} [A_n(x)^l+B_n(x)^l+C_{n}(x)^{l}] d x.
\end{split}
\end{equation}
Recalling the definition of the semi-correlation set $\Mcc_{l}(n)$ in \eqref{eq:M semi-corr def}, we observe that, for $i=1,2$, we
easily evaluate:
\begin{equation}
\label{eq:mom bnd sum semi-corr}
\int\limits_{\Qc} A_{n}(x)^{l}dx = \int\limits_{0}^{1} A_{n}(x)^{l}dx_{1} = \frac{2 \pi}{2^l}  |\Mcc_{l}(n)|,
\end{equation}
and the same
\begin{equation}
\label{eq:mom bnd sum semi-corr2}
\int\limits_{\Qc} B_{n}(x)^{l}dx =\frac{2 \pi}{2^l}  |\Mcc_{l}(n)|,
\end{equation}
whereas for the other integral in \eqref{eq:AB}, we recall the correlation set \eqref{eq:prec corr def} to bound
\begin{equation}
\label{eq:mom bnd sum semi-corr3}
\int\limits_{\Qc} C_n(x)^l d x \ll |\Rc_{l}(n)| \le |\Mcc_{l}(n)|,
\end{equation}
as, obviously, $\Rc_{l}(n)\subseteq \Mcc_{l}(n)$.

The first statement \eqref{eq:sing meas<<semi-corr 1} of Lemma \ref{lem:Cheby 1}
follows directly from \eqref{eq:AB}, \eqref{eq:mom bnd sum semi-corr}, \eqref{eq:mom bnd sum semi-corr2} and \eqref{eq:mom bnd sum semi-corr3},
via Chebyshev's inequality.
Finally, the same argument as above also yields the second statement \eqref{eq:sing meas<<semi-corr 2} of Lemma \ref{lem:Cheby 1},
upon observing that for every $y \in \Qc_{s,2}$ we have $|s_n(y)|\le \epsilon_0$ with $\epsilon_0$ small; so on $\Qc_{s,2}$ we can Taylor expand the function $$\frac{1}{v_n(x)}=\frac{1}{1-s_n(x)}=1+O(s_n(x))$$
that appear in the entries of the conditional covariance matrix $\Gamma_n(x)$.

\end{proof}

\section{Proof of Proposition \ref{prop:sing sqr bnd}: controlling the contribution of a small square}

\label{sec:bound for a single small square}

\subsection{Proof of Proposition \ref{prop:sing sqr bnd}}

We will first state the following lemma, whose proof will be given in
\S\ref{sec:lem bnd los sum prf} immediately below.

\begin{lemma}
\label{lem:bnd lose sum}
Let $Q\subseteq \Qc$ be an arbitrary square of side length $\frac{c_{0}}{\sqrt{n}}$ with $c_{0}>0$ sufficiently
small. We have the following uniform bound, holding for all $\eta,\mu\in \Ec_n$ with $\eta_{1}\ne 0$ and $\mu_{1}\ne 0$:
\begin{equation}
\label{eq:int Sij<<1/sqrt(n)}
\begin{split}
&\int\limits_{Q}
\frac{|\sin(\eta_{2} \pi x_{2})\sin(\mu_{2} \pi x_{2})|\cdot \left|\mu_{1}\sin(\eta_{1} \pi x_{1})\cos(\mu_{1} \pi x_{1})-\eta_{1}
\sin(\mu_{1} \pi x_{1})\cos(\eta_{1} \pi x_{1})\right|}{\left\{\sin^{4}(\eta_{1} \pi x_{1})\sin^{4}(\eta_{2} \pi x_{2})+
\sin^{4}(\mu_{1} \pi x_{1}) \sin^{4}(\mu_{2} \pi x_{2}) \right\}^{1/2}} dx_{1}dx_{2} \\&= O\left(\frac{1}{\sqrt{n}}\right),
\end{split}
\end{equation}
with constant involved in the `$O$'-notation depending only on the constant $c_{0}>0$.
\end{lemma}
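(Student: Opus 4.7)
The plan is to reduce the integrand to a form amenable to a change-of-variables argument on each $x_1$-slice. Introducing the notations
\[ u(x_1) := \frac{\sin(\mu_1\pi x_1)}{\sin(\eta_1\pi x_1)}, \qquad w(x_2) := \frac{\sin(\mu_2\pi x_2)}{\sin(\eta_2\pi x_2)}, \]
a direct computation shows
\[ |\mu_1\sin(\eta_1\pi x_1)\cos(\mu_1\pi x_1) - \eta_1\sin(\mu_1\pi x_1)\cos(\eta_1\pi x_1)| = \frac{\sin^2(\eta_1\pi x_1)}{\pi}\cdot |u'(x_1)|, \]
while $|\sin(\eta_2\pi x_2)\sin(\mu_2\pi x_2)| = \sin^2(\eta_2\pi x_2)\cdot |w(x_2)|$ and the denominator in \eqref{eq:int Sij<<1/sqrt(n)} factors as $\sin^2(\eta_1\pi x_1)\sin^2(\eta_2\pi x_2)\sqrt{1+u(x_1)^4 w(x_2)^4}$. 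The integrand thus simplifies to
\[ \frac{|w(x_2)|\cdot |u'(x_1)|}{\pi\sqrt{1+u(x_1)^4 w(x_2)^4}}. \]

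For each fixed $x_2$, I would estimate the inner integral by decomposing $I_1$ into the monotonicity sub-intervals $J\subseteq I_1$ of $u$, and performing the substitution $z = w(x_2)\cdot u(x_1)$ on each. Since $C_0 := \int_\R dz/\sqrt{1+z^4} < \infty$, one obtains for each such $J$
\[ \int_J \frac{|u'(x_1)|}{\sqrt{1+w(x_2)^4 u(x_1)^4}}\, dx_1 \leq \frac{C_0}{|w(x_2)|}, \]
so that the product $|w(x_2)|\cdot \int_J(\cdots)dx_1 \le C_0$ is bounded \emph{uniformly} in $x_2$. Summing over all the monotonicity pieces and then integrating over $x_2\in I_2$, whose length is $c_0/\sqrt{n}$, yields the claimed bound $O(1/\sqrt{n})$, provided the number of monotonicity pieces of $u$ on $I_1$ can be controlled by an absolute constant.

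The crux is therefore a uniform monotonicity count, for which the key observation is that the critical points of $u$ on $I_1$ are precisely the zeros of
\[ g(x_1) := \mu_1\sin(\eta_1\pi x_1)\cos(\mu_1\pi x_1) - \eta_1\sin(\mu_1\pi x_1)\cos(\eta_1\pi x_1), \]
and a direct manipulation via the product-to-sum formulae yields the clean identity
\[ g'(x_1) = -\pi(\mu_1^2 - \eta_1^2)\sin(\eta_1\pi x_1)\sin(\mu_1\pi x_1). \]
Since $I_1$ has length $c_0/\sqrt{n}$ and $|\eta_1|, |\mu_1|\le \sqrt{n}$, for $c_0$ sufficiently small each of the two sine factors has at most one zero on $I_1$; hence $g'$ has at most two zeros on $I_1$ (assuming $\mu_1^2\ne\eta_1^2$), and $g$ itself has at most three sign changes. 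Combined with the at most one possible pole of $u$ on $I_1$, this gives an $O(1)$ bound, with absolute constant, on the number of monotonicity pieces of $u$ on $I_1$. In the degenerate case $\mu_1^2=\eta_1^2$, the relation $\mu_1^2+\mu_2^2=\eta_1^2+\eta_2^2=n$ forces $\mu=(\pm\eta_1,\pm\eta_2)$, whence $u\equiv\pm 1$ is constant and $u'\equiv 0$, making the integrand identically zero. The main technical step in this plan is precisely this monotonicity count via the identity for $g'$: it is the miraculous cancellation making the $\cos A - \cos B$ difference collapse to a product of two sines which enables a bound $O(1/\sqrt{n})$ that is truly independent of the pair $(\eta,\mu)$.
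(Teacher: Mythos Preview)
Your proof is correct and takes a genuinely different route from the paper's. The paper writes $\eta_1\pi x_1 = k_\eta\pi + h_\eta$ with $h_\eta\in(-\pi/2,\pi/2]$ (similarly $h_\mu$), Taylor-expands the numerator as $|\mu_1 h_\eta - \eta_1 h_\mu| + O(h^3)$, and observes that the linear main term is the \emph{constant} $\eta_1\mu_1\,d(\eta,\mu):=\eta_1\mu_1|x_1^\eta-x_1^\mu|$; after lower-bounding the denominator by $s_\eta^2 h_\eta^2 + s_\mu^2 h_\mu^2$, completing the square in $x_1$, and reducing to an $\int dt/(1+t^2)$ integral, the main term contributes $O(1/\sqrt n)$, while the cubic error terms and a separate degenerate case $d(\eta,\mu)=0$ are handled by further ad hoc arguments exploiting the same linear relation between $h_\eta$ and $h_\mu$. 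Your approach instead recognises the numerator as $\pi^{-1}\sin^2(\eta_1\pi x_1)\,|u'|$ via the quotient rule, which collapses the integrand \emph{exactly}---no Taylor expansion, no main/error split---to $|w|\,|u'|/\bigl(\pi\sqrt{1+u^4w^4}\bigr)$, and the substitution $z=w\,u$ on each monotonicity piece reduces the inner $x_1$-integral to a portion of $\int_\R dz/\sqrt{1+z^4}<\infty$. All the work is then in the monotonicity count, which your identity $g'(x_1)=-\pi(\mu_1^2-\eta_1^2)\sin(\eta_1\pi x_1)\sin(\mu_1\pi x_1)$ makes trivial on an interval of length $c_0/\sqrt n$. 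Your route is noticeably cleaner (no case split, no remainder bookkeeping), whereas the paper's approach makes the linear structure $h_\eta=\eta_1(\pi x_1-x_1^\eta)$ and the role of the offset $d(\eta,\mu)$ more geometrically transparent, at the cost of a longer argument.
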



\begin{proof}[Proof of Proposition \ref{prop:sing sqr bnd} assuming Lemma \ref{lem:bnd lose sum}]
Let $X$ and $Y$ be the conditional random variables
\begin{equation*}
X= \left.  \partial_{x_1} f_n(x) \right | f_n(x)=0, \hspace{1cm} Y= \left.  \partial_{x_2} f_n(x) \right | f_n(x)=0,
\end{equation*}
and recall that the joint distribution of $(X,Y)$ is centred Gaussian, with covariance matrix equal to $\Omega_{n}(x)$ in
\eqref{eq:Omega cond covar def} (and \eqref{eq:Gamma comp}-\eqref{eq:d2 def}), up to the normalising constant read from
\eqref{eq:Omega cond covar def}.
Then, with the aid of the Cauchy-Schwarz inequality, we obtain the inequality
\begin{align*}
\mathbb{E}[||\nabla f_n(x)||\; | \;f_n(x)=0]&\le \mathbb{E}[|X|] + \mathbb{E}[|Y|] \le\sqrt{(\E[|X|])^2}+ \sqrt{(\E[|Y|])^2} \\
&\le \sqrt{\E[X^{2}]}+ \sqrt{\E[Y^{2}]},
\end{align*}
so that we may bound the zero density \eqref{eq:K1 fn density def} as
\begin{equation*}
K_{1}(x) \le \widetilde{K}_{1,1}(x)+\widetilde{K}_{1,2}(x),
\end{equation*}
where $$\widetilde{K}_{1,1}(x) = \frac{1}{\sqrt{2\pi}\sqrt{v_{n}(x)}}\cdot \sqrt{\E[X^{2}]},$$
and $$\widetilde{K}_{1,2}(x) = \frac{1}{\sqrt{2\pi}\sqrt{v_{n}(x)}}\cdot \sqrt{\E[Y^{2}]}.$$

In what follows we are going to prove that
\begin{equation}
\label{eq:K11 cont<<1/sqrt(n)}
\int\limits_{Q} \widetilde{K}_{1,1}(x)dx = O\left(\frac{N_{n}^{2}}{\sqrt{n}}\right),
\end{equation}
and the same proof (with all coordinates switched) yields the other estimate
\begin{equation}
\label{eq:K12 cont<<1/sqrt(n)}
\int\limits_{Q} \widetilde{K}_{1,2}(x)dx = O\left(\frac{N_{n}^{2}}{\sqrt{n}}\right);
\end{equation}
together these imply \eqref{eq:int(K1) small cube << N^2/sqrt(n)}.
By \eqref{eq:Omega cond covar def}, and on recalling that
\begin{align*}
\frac{n \pi^2}{2}+ \frac{4 \pi^2}{N_n}b_{11;n}(x)=  \frac{16 \pi^2}{N_n} \sum_{\mu\in \Ec_{n}/\sim} \mu_1^2
 \sin^2 (  \mu_2 \pi x_2)  \cos^2 (  \mu_1 \pi x_1),
\end{align*}
we express the variance of the conditional derivative as
\begin{align*}
\mathbb{E}[X^2]=\frac{16 \pi^2}{N_n} \sum_{\mu\in \Ec_{n}/\sim} \mu_1^2
 \sin^2 (  \mu_2 \pi x_2)  \cos^2 (  \mu_1 \pi x_1)-  \frac{64 \pi^2}{N_n^2\, v_n(x)} d_{1;n}(x)^2
\end{align*}
since
\begin{align*}
v_n(x)&=\frac{16}{N_n} \sum_{\mu\in \Ec_{n}/\sim }
 \sin^2 (  \mu_1 \pi x_1)  \sin^2 (  \mu_2 \pi x_2),
\end{align*}
it follows that
\begin{align*}
\frac{\mathbb{E}[X^2] }{v_n(x)}&= \frac{16 \, \pi^2}{ N_n^2 \, v^2_n(x)  } \left[  v_n(x) N_n \sum_{ \mu\in \Ec_{n}/\sim } \mu_1^2
 \sin^2 (  \mu_2 \pi x_2)  \cos^2 (  \mu_1 \pi x_1)-  4  d_{1;n}(x)^2  \right]\\
  &=  \frac{16 \, \pi^2}{ N_n^2 \, v^2_n(x)  } \left[  v_n(x) N_n \sum_{ \mu\in \Ec_{n}/\sim  } \mu_1^2
 \sin^2 (  \mu_2 \pi x_2)  \cos^2 (  \mu_1 \pi x_1) \right.\\
 &\;\;- \left. 4^2  \left( \sum_{\mu\in \Ec_{n}/\sim}
  \mu_1    \sin (  \mu_1 \pi x_1)  \cos (  \mu_1 \pi x_1)  \sin^2 (  \mu_2 \pi x_2)   \right)^2  \right]\\
  &=  \frac{16^2 \, \pi^2}{ N_n^2 \, v^2_n(x)  } \left[  v_n(x) \frac{N_n}{16} \sum_{ \mu\in \Ec_{n}/\sim } \mu_1^2
 \sin^2 (  \mu_2 \pi x_2)  \cos^2 (  \mu_1 \pi x_1) \right.\\
 &\;\;-  \left.  \left( \sum_{ \mu\in \Ec_{n}/\sim }
  \mu_1    \sin (  \mu_1 \pi x_1)  \cos (  \mu_1 \pi x_1)  \sin^2 (  \mu_2 \pi x_2)   \right)^2  \right]
 \end{align*}
where
\begin{align*}
&v_n(x) \frac{N_n}{16} \sum_{ \mu\in \Ec_{n}/\sim } \mu_1^2
 \sin^2 (  \mu_2 \pi x_2)  \cos^2 (  \mu_1 \pi x_1)-    \left( \sum_{ \mu\in \Ec_{n}/\sim }
  \mu_1    \sin (  \mu_1 \pi x_1)  \cos (  \mu_1 \pi x_1)  \sin^2 (  \mu_2 \pi x_2)   \right)^2\\
  &  =
\sum_{ \eta \in \Ec_n /\sim}
 \sin^2 (  \eta_1 \pi x_1)  \sin^2 (  \eta_2 \pi x_2) \sum_{\mu \in \Ec_n /\sim} \mu_1^2
 \sin^2 (  \mu_2 \pi x_2)  \cos^2 (  \mu_1 \pi x_1)\\
 &\;\; -     \sum_{\eta \in \Ec_n /\sim}
  \eta_1    \sin (  \eta_1 \pi x_1)  \cos (  \eta_1 \pi x_1)  \sin^2 (  \eta_2 \pi x_2)
  \sum_{\mu \in \Ec_n /\sim}
  \mu_1    \sin (  \mu_1 \pi x_1)  \cos (  \mu_1 \pi x_1)  \sin^2 (  \mu_2 \pi x_2).
\end{align*}
It follows that we have the following explicit expression:
\begin{equation}
\label{eq:EX^2/vn double sum}
\begin{split}
\frac{\E[X^{2}]}{v_{n}(x)}& =
\frac{\pi^{2}}{\sum\limits_{\eta,\mu \in \Ec_n/\sim} \sin^{2}(\eta_{1} \pi x_{1})\sin^{2}(\eta_{2} \pi x_{2})\sin^{2}(\mu_{1} \pi x_{1})
\sin^{2}(\mu_{2} \pi x_{2})} \\
& \;\; \times \sum\limits_{\eta,\mu \in \Ec_n/\sim} \sin(\eta_{1} \pi x_{1})\sin^{2}(\eta_{2} \pi x_{2}) \cos(\mu_{1} \pi x_{1})\sin^{2}(\mu_{2} \pi x_{2})\\
&\;\; \times \left[\mu_{1}^{2}\sin(\eta_{1} \pi x_{1})\cos(\mu_{1} \pi x_{1})-\eta_{1}\mu_{1}\sin(\mu_{1} \pi x_{1})\cos(\eta_{1} \pi x_{1}) \right].
\end{split}
\end{equation}

Next, by grouping $(\eta,\mu)$ together with $(\mu,\eta)$ on the r.h.s. of \eqref{eq:EX^2/vn double sum}, this is equivalent to
\begin{equation*}
\begin{split}
\frac{\E[X^{2}]}{v_{n}(x)} &=
\frac{1}{2}\frac{\pi^{2}}{\sum\limits_{\eta,\mu \in \Ec_n/\sim} \sin^{2}(\eta_{1} \pi x_{1})\sin^{2}(\eta_{2}\pi x_{2}) \sin^{2}(\mu_{1} \pi x_{1}) \sin^{2}(\mu_{2} \pi x_{2})}\\
&\;\; \times\sum\limits_{\eta,\mu \in \Ec_n/\sim} \sin^{2}(\eta_{2} \pi x_{2}) \sin^{2}(\mu_{2} \pi x_{2})\\
&\;\; \times \left[\mu_{1}\sin(\eta_{1} \pi x_{1})\cos(\mu_{1} \pi x_{1})-\eta_{1}\sin(\mu_{1} \pi x_{1})\cos(\eta_{1} \pi x_{1}) \right]^{2}.
\end{split}
\end{equation*}
Hence
\begin{equation}
\label{eq:contr K11 lose den}
\begin{split}
&\widetilde{K}_{1,1}(x)\ll\\
&\frac{\left\{\sum\limits_{\eta,\mu \in \Ec_n/\sim} \sin^{2}(\eta_{2} \pi x_{2})\sin^{2}(\mu_{2} \pi x_{2})
\cdot \left[\mu_{1}\sin(\eta_{1} \pi x_{1})\cos(\mu_{1} \pi x_{1})-\eta_{1}\sin(\mu_{1} \pi x_{1})\cos(\eta_{1} \pi x_{1}) \right]^{2}\right\}^{1/2}}{\left\{\sum\limits_{\eta,\mu \in \Ec_n/\sim} \sin^{2}(\eta_{1} \pi x_{1})\sin^{2}(\eta_{2} \pi x_{2})\sin^{2}(\mu_{1}  \pi x_{1}) \sin^{2}(\mu_{2} \pi x_{2})\right\}^{1/2}}
\\& \le  \sum\limits_{\eta,\mu \in \Ec_n/\sim} \frac{|\sin(\eta_{2} \pi x_{2})\sin(\mu_{2} \pi x_{2})|
\cdot \left|\mu_{1}\sin(\eta_{1} \pi x_{1})\cos(\mu_{1} \pi x_{1})-\eta_{1}\sin(\mu_{1} \pi x_{1})\cos(\eta_{1} \pi x_{1}) \right|}{\left\{\sum\limits_{\eta,\mu \in \eta_{n}/\sim} \sin^{2}(\eta_{1} \pi x_{1})\sin^{2}(\eta_{2} \pi x_{2}) \sin^{2}(\mu_{1} \pi x_{1}) \sin^{2}(\mu_{2} \pi x_{2})\right\}^{1/2}}
\\& \le  \sum\limits_{\eta,\mu \in \Ec_n/\sim} \frac{|\sin(\eta_{2} \pi x_{2})\sin(\mu_{2} \pi x_{2})|
\cdot \left|\mu_{1}\sin(\eta_{1} \pi x_{1})\cos(\mu_{1} \pi x_{1})-\eta_{1}\sin(\mu_{1} \pi x_{1})\cos(\eta_{1} \pi x_{1}) \right|}{\left\{ \sin^{2}(\eta_{1} \pi x_{1})\sin^{2}(\eta_{2} \pi x_{2}) \sin^{2}(\mu_{1} \pi x_{1}) \sin^{2}(\mu_{2} \pi x_{2})\right\}^{1/2}},
\end{split}
\end{equation}
where we used the positivity of all the summands in the denominator, as well as the easy inequality
$$\sum\limits_{k=1}^{K}a_{k}^{2} \le  \left(\sum\limits_{k=1}^{K}|a_{k}|\right)^{2},$$
valid for any sequence of real numbers $\{a_{k}\}_{k=1}^{K}$.
The bound \eqref{eq:K11 cont<<1/sqrt(n)} (and similarly \eqref{eq:K12 cont<<1/sqrt(n)}), implying, as it was mentioned above,
the statement of Proposition \ref{prop:sing sqr bnd},
finally follows upon integrating the individual summands on the r.h.s. of \eqref{eq:contr K11 lose den},
and using Lemma \ref{lem:bnd lose sum} to bound the contribution of each one of them
(recall that we are allowed to assume that $\eta_{1},\mu_{1}\ne 0$).

\end{proof}

\subsection{Proof of Lemma \ref{lem:bnd lose sum}}
\label{sec:lem bnd los sum prf}

\begin{proof}

Let $\eta, \mu \in \Ec_n/\sim$, and $Q\subseteq \Qc$ be an arbitrary square of side length $\frac{c_{0}}{\sqrt{n}}$ with $c_{0}>0$ sufficiently small. Now we write
\begin{equation}
\label{eq:hlambda def}
\eta_{1} \pi x_{1} = k_{\eta}\pi+h_{\eta},
\end{equation}
where
$x_1 \in [0,1]$, $k_{\eta}= \left[ \frac{\eta_{1}x_{1}}{\pi} \right]\in\Z$ is the integer value of $\eta_{1}x_{1}/\pi$,
independent of $x_{1}$ by the above, and $h_{\eta}=h_{\eta}(x_{1})\in (-\pi/2,\pi/2]$. We also denote
\begin{equation}
\label{eq:x1lambda def}
x_{1}^{\eta}= \frac{k_{\eta}\pi}{\eta_{1}} = \pi x_{1}- \frac{h_{\eta}}{\eta_{1}};
\end{equation}
the numbers $k_{\mu}$, $x_{1}^{\mu}$ and $h_{\mu}=h_{\mu}(x_{1})$ are defined analogously, with $\eta_{1}$ replaced by $\mu_{1}$.
Note that, $h_{\eta}$ and $h_{\mu}$, both being linear functions of $x_{1}$, satisfy the relation
\begin{equation}
\label{eq:mu1h1-lam1h2=mult pi}
\eta_{1}h_{\mu}-\mu_{1}h_{\eta} = \pi\left(\mu_{1}k_{\eta}-\eta_{1}k_{\mu}\right) =
\eta_{1}\mu_{1} \left(x_{1}^{\eta} - x_{1}^{\mu}\right) ,
\end{equation}
that will be exploited below. Finally, we denote
\begin{equation}
\label{eq:d dist def}
d(\eta,\mu)=d_{ij}(\eta,\mu)=|x_{1}^{\eta}-x_{1}^{\mu}|,
\end{equation}
so that \eqref{eq:mu1h1-lam1h2=mult pi} reads
\begin{equation}
\label{eq:mu1h1-lam1h2=dist x1lam,mu}
|\eta_{1}h_{\mu}-\mu_{1}h_{\eta}| = \eta_{1}\mu_{1}\cdot d(\eta,\mu)
\end{equation}
crucially depending on $\eta,\mu$ and $Q$ only (but independent of $x_{1}$).\\

First, we assume, that $x_{1}^{\eta}\ne x_{1}^{\mu}$, i.e.
\begin{equation}
\label{eq:d(lam,mu)>0}
d(\eta,\mu) > 0,
\end{equation}
and by switching between $\eta$ and $\mu$ if necessary, we may assume w.l.o.g., that
\begin{equation}
\label{eq:x1mu>x1lam}
d(\eta,\mu) = x_{1}^{\mu}-x_{1}^{\eta} > 0.
\end{equation}
We expand the numerator of the integrand on l.h.s. of \eqref{eq:int Sij<<1/sqrt(n)} (first, without the absolute value):
\begin{equation*}
\begin{split}
&\mu_{1}\sin(\eta_{1} \pi x_{1})\cos(\mu_{1} \pi x_{1})-\eta_{1}
\sin(\mu_{1} \pi x_{1})\cos(\eta_{1} \pi x_{1}) \\&= (-1)^{k_{\eta}+k_{\mu}}\cdot\left[ \mu_{1}\sin(h_{\eta})\cos(h_{\mu}) - \eta_{1}\sin(h_{\mu})\cos(h_{\eta})\right],
\end{split}
\end{equation*}
so that
\begin{equation}
\label{eq:num exp dist err}
\begin{split}
&\left|\mu_{1}\sin(\eta_{1} \pi x_{1})\cos(\mu_{1} \pi x_{1})-\eta_{1}
\sin(\mu_{1} \pi x_{1})\cos(\eta_{1} \pi x_{1})\right| \\&=\left|\mu_{1}h_{\eta}- \eta_{1}h_{\mu}\right|+E_{\eta,\mu}(x_{1})
= \eta_{1}\mu_{1}d(\eta,\mu) +E_{\eta,\mu}(x_{1})
\end{split}
\end{equation}
by \eqref{eq:mu1h1-lam1h2=dist x1lam,mu}, where $d(\eta,\mu)$ is as in \eqref{eq:d dist def},
and the error term is bounded by
\begin{equation}
\label{eq:Elammu bnd}
|E_{\eta,\mu}(x_{1})| = O\left( \mu_{1}h_{\eta}^{3} +  \mu_{1}h_{\eta}h_{\mu}^{2} +\eta_{1}h_{\mu}h_{\eta}^{2}
+\eta_{1} h_{\mu}^{3} \right).
\end{equation}
For the denominator of the integrand on l.h.s. of \eqref{eq:int Sij<<1/sqrt(n)}, we have
\begin{equation}
\label{eq:K1 denom low bnd1}
\begin{split}
&\left\{\sin^{4}(\eta_{1} \pi x_{1}) \sin^{4}(\eta_{2} \pi x_{2})+
\sin^{4}(\mu_{1} \pi x_{1}) \sin^{4}(\mu_{2} \pi x_{2}) \right\}^{1/2}\\&\gg
\sin^{2}(\eta_{1} \pi x_{1})\sin^{2} (\eta_{2} \pi x_{2})+
\sin^2(\mu_{1} \pi x_{1}) \sin^2(\mu_{2} \pi x_{2}) \\&\gg {h_{\eta}}^{2}\sin(\eta_{2}x_{2})^{2} +
{h_{\mu}}^{2}\sin(\mu_{2}x_{2})^{2},
\end{split}
\end{equation}
and also
\begin{equation*}
\begin{split}
&\left\{\sin^4(\eta_{1} \pi x_{1})\sin^4(\eta_{2} \pi x_{2})+
\sin^4(\mu_{1} \pi x_{1}) \sin^4(\mu_{2} \pi x_{2})\right\}^{1/2} \\&\gg
\left|\sin(\eta_{1}  \pi x_{1})\sin(\eta_{2}  \pi x_{2})\sin(\mu_{1}  \pi x_{1})\sin(\mu_{2}  \pi x_{2})\right|
\gg \left|h_{\eta}h_{\mu} \sin(\eta_{2}  \pi x_{2})\sin(\mu_{2}  \pi x_{2})\right|,
\end{split}
\end{equation*}
and we plan to use the former inequality whenever we are going to bound the contribution
of the main term of \eqref{eq:num exp dist err}, and both inequalities for the error term in \eqref{eq:num exp dist err}.\\

We denote $s_{\eta}=s_{\eta}(x)=\sin(\eta_{2} \pi x_{2})$ and $s_{\mu}=s_{\mu}(x) =\sin(\mu_{2} \pi x_{2})$. With the newly
introduced notation, the denominator \eqref{eq:K1 denom low bnd1} is
\begin{equation}
\label{eq:denom low bnd sh}
\begin{split}
&\left\{\sin^{4}(\eta_{1} \pi x_{1}) \sin^{4}(\eta_{2} \pi x_{2})+
\sin^{4}(\mu_{1} \pi x_{1}) \sin^{4}(\mu_{2}\pi x_{2}) \right\}^{1/2} \gg s_{\eta}^{2}h_{\eta}^{2} +
s_{\mu}^{2}{h_{\mu}}^{2};
\end{split}
\end{equation}
as we will keep $x_{2}$ fixed, and aim at first to integrate w.r.t. $x_{1}$, we will treat $s_{\eta}$ and $s_{\mu}$
as parameters.
The estimates \eqref{eq:num exp dist err} and \eqref{eq:denom low bnd sh} imply that the integral in
\eqref{eq:int Sij<<1/sqrt(n)} is bounded by
\begin{equation}
\label{eq:int Sij<<main + error}
\begin{split}
&\int\limits_{Q}
\frac{|\sin(\eta_{2} \pi x_{2})\sin(\mu_{2} \pi x_{2})|\left|\mu_{1}\sin(\eta_{1} \pi x_{1})\cos(\mu_{1} \pi x_{1})-\eta_{1}
\sin(\mu_{1} \pi x_{1})\cos(\eta_{1} \pi x_{1})\right|}{\left\{\sin^{4}(\eta_{1} \pi x_{1})\sin^{4}(\eta_{2} \pi x_{2})+
\sin^{4}(\mu_{1}\pi x_{1})\sin^{4}(\mu_{2} \pi x_{2})\right\}^{1/2}} dx_{1}dx_{2}
\\&\ll \int\limits_{Q} |s_{\eta}s_{\mu}|  \frac{ \eta_{1}\mu_{1}d(\eta,\mu) +E_{\eta,\mu}(x_{1}) }{s_{\eta}^{2}h_{\eta}^{2} +
s_{\mu}^{2}{h_{\mu}}^{2}} dx_{1}dx_{2},
\end{split}
\end{equation}
and, in what follows, we are going to bound the contribution of the main and the error terms by separate arguments.

\vspace{2mm}

First, we bound the contribution of the main term in the integrand on the r.h.s. of \eqref{eq:int Sij<<main + error}.
To this end we notice that $h_{\eta}$ and $h_{\mu}$ are both linear functions
of $x_{1}$, so we are going to exploit their inter-dependence \eqref{eq:hlambda def} (and its analogue for $h_{\mu}$)
to write:
\begin{equation}
\label{eq:num deltax1}
\begin{split}
&s_{\eta}^{2}h_{\eta}^{2} + s_{\mu}^{2}h_{\mu}^{2} = s_{\eta}^{2}\eta_{1}^{2}(\pi x_{1}-x_{1}^{\eta})^{2} +
s_{\mu}^{2}\mu_{1}^{2}(\pi x_{1}-x_{1}^{\mu})^{2} \\&=
s_{\eta}^{2}\eta_{1}^{2}\triangle x_{1}^{2} +
s_{\mu}^{2}\mu_{1}^{2}(d(\eta,\mu)-\triangle x_{1})^{2},
\end{split}
\end{equation}
where we denoted
\begin{equation}
\label{eq:delta x1 trans coord}
\triangle x_{1}:=\pi x_{1}-x_{1}^{\eta},
\end{equation}
a linear transformation of the variable $x_{1}$, and used \eqref{eq:x1mu>x1lam}.
We complete the expression on the r.h.s. of \eqref{eq:num deltax1} to a square:
\begin{equation}
\label{eq:denom complete square deltax}
\begin{split}
&s_{\eta}^{2}h_{\eta}^{2} + s_{\mu}^{2}h_{\mu}^{2} = s_{\eta}^{2}\eta_{1}^{2}\triangle x_{1}^{2} +
s_{\mu}^{2}\mu_{1}^{2}(d(\eta,\mu)-\triangle x_{1})^{2} \\&=
\frac{1}{s_{\eta}^{2}\eta_{1}^{2}+s_{\mu}^{2}\mu_{1}^{2}} \left\{ \left[\left( s_{\eta}^{2}\eta_{1}^{2}+s_{\mu}^{2}\mu_{1}^{2} \right)\triangle x_{1} -s_{\mu}^{2}\mu_{1}^{2}d(\eta,\mu) \right]^{2}   + s_{\eta}^{2}s_{\mu}^{2}\eta_{1}^{2}\mu_{1}^{2}d^2(\eta,\mu)\right\};
\end{split}
\end{equation}
note that we may assume that $s_{\mu}\ne 0$ and $s_{\eta} \ne 0$ (holding outside a discrete set of $x_{2}$).
Substituting the identity \eqref{eq:denom complete square deltax} into \eqref{eq:num deltax1}, we may bound the
contribution of the main term of the integral on the r.h.s. of \eqref{eq:int Sij<<main + error} as
\begin{equation}
\label{eq:int Sij transform atan}
\begin{split}
&\int\limits_{Q} |s_{\eta}s_{\mu}| \frac{\eta_{1}\mu_{1}d(\eta,\mu)}{s_{\eta}^{2}h_{\eta}^{2} +
s_{\mu}^{2}h^{2}_{\mu}} dx_{1}dx_{2}= \eta_{1}\mu_{1}d(\eta,\mu)
\int\limits_{I_{j}}|s_{\eta}s_{\mu}|\left(s_{\eta}^{2}\eta_{1}^{2}+s_{\mu}^{2}\mu_{1}^{2}\right)dx_{2} \times\\&\times
\int\limits_{I_{i}}
\frac{1}{\left[\left( s_{\eta}^{2}\eta_{1}^{2}+s_{\mu}^{2}\mu_{1}^{2} \right)\triangle x_{1} -s_{\mu}^{2}\mu_{1}^{2}d(\eta,\mu) \right]^{2}   + s_{\eta}^{2}s_{\mu}^{2}\eta_{1}^{2}\mu_{1}^{2}d^{2}(\eta,\mu)}dx_{1}
\\&=
\frac{1}{\eta_{1}\mu_{1}d(\eta,\mu)}
\int\limits_{I_{j}}\frac{s_{\eta}^{2}\eta_{1}^{2}+s_{\mu}^{2}\mu_{1}^{2}}{|s_{\eta}s_{\mu}|}dx_{2}\int\limits_{\tilde{I}_{i}}
\frac{d\triangle x_{1}}{\left[\frac{ s_{\eta}^{2}\eta_{1}^{2}+s_{\mu}^{2}\mu_{1}^{2} }{\eta_{1}\mu_{1}s_{\eta}s_{\mu}d(\eta,\mu)}\triangle x_{1} -
\frac{s_{\mu}\mu_{1}}{s_{\eta}\eta_{1}} \right]^{2}+1 },
\end{split}
\end{equation}
where we have transformed the coordinates \eqref{eq:delta x1 trans coord}, and $\tilde{I}_{i}$ is some shift of the interval
$I_{i}$.

Another transformation of coordinates w.r.t. $\triangle x_{1}$ shows that,
denoting $\widetilde{\widetilde{I}}_{i}$ the new range of integration, \eqref{eq:int Sij transform atan} is equal to
\begin{equation}
\label{eq:int Sij main <<1/sqrt(n)}
\begin{split}
&\int\limits_{Q} |s_{\eta}s_{\mu}| \frac{\eta_{1}\mu_{1}d(\eta,\mu)}{s_{\eta}^{2}h_{\eta}^{2} +
s_{\mu}^{2}h_{\mu}^{2}} dx_{1}dx_{2} = \int\limits_{I_{j}}dx_{2} \int\limits_{\widetilde{\widetilde{{ I}_{i}}}}\frac{dt}{1+t^{2}} \ll
\frac{1}{\sqrt{n}},
\end{split}
\end{equation}
since the integral w.r.t. $t$ is bounded by an absolute constant.

\vspace{2mm}

Next, we turn to evaluating the contribution of the error term
\begin{equation*}
\int\limits_{Q} |s_{\eta}s_{\mu}| \cdot \frac{E_{\eta,\mu}(x_{1})}{s_{\eta}^{2}h_{\eta}^{2} +
s_{\mu}^{2}h^{2}_{\mu}} dx_{1}dx_{2}
\end{equation*}
to the integral \eqref{eq:int Sij<<main + error}.
By \eqref{eq:Elammu bnd}, we have
\begin{equation}
\label{eq:contr error h1^3 h2^3}
\begin{split}
&\int\limits_{Q}|s_{\eta}s_{\mu}| \frac{|E_{\eta,\mu}(x_{1})|}{s_{\eta}^{2}h_{\eta}^{2} +
s_{\mu}^{2}h_{\mu}^{2}} dx_{1}dx_{2} \\&\ll
\int\limits_{Q}|s_{\eta}s_{\mu}|  \frac{ |\mu_{1}h_{\eta}^{3}| }{s_{\eta}^{2}h_{\eta}^{2} + s_{\mu}^{2}h_{\mu}^{2}} dx_{1}dx_{2}+
\int\limits_{Q} |s_{\eta}s_{\mu}| \frac{ |\mu_{1}h_{\eta}h_{\mu}^{2}|}{s_{\eta}^{2}h_{\eta}^{2} + s_{\mu}^{2}h_{\mu}^{2}} dx_{1}dx_{2}\\& + \int\limits_{Q} |s_{\eta}s_{\mu}| \frac{ |\eta_{1}h_{\mu}h_{\eta}^{2}}{s_{\eta}^{2}h_{\eta}^{2} + s_{\mu}^{2}h_{\mu}^{2}} dx_{1}dx_{2} + \int\limits_{Q} |s_{\eta}s_{\mu}| \frac{ |\eta_{1} h_{\mu}^{3}|}{s_{\eta}^{2}h_{\eta}^{2} + s_{\mu}^{2}h_{\mu}^{2}} dx_{1}dx_{2};
\end{split}
\end{equation}
we will bound the $1$st integral on the r.h.s. of \eqref{eq:contr error h1^3 h2^3}, with the last one being bounded along similar lines,
and the $2$nd and the $3$rd ones are easier, as the corresponding numerator is divisible by both $h_{\eta}$ and $h_{\mu}$
(see the argument below).

We might bound the $1$st integral on the r.h.s. of \eqref{eq:contr error h1^3 h2^3}
(or the integral w.r.t. $x_{1}$),
using the Cauchy-Schwarz inequality
\begin{equation}
\label{eq:denominator lower CS}
s_{\eta}^{2}h_{\eta}^{2} + s_{\mu}^{2}h_{\mu}^{2} \gg s_{\eta}h_{\eta}s_{\mu}h_{\mu},
\end{equation}
to bound the denominator from below, so that
\begin{equation*}
\int\limits_{Q}|s_{\eta}s_{\mu}|  \frac{ |\mu_{1}h_{\eta}^{3}| }{s_{\eta}^{2}h_{\eta}^{2} + s_{\mu}^{2}h_{\mu}^{2}} dx_{1}
\le \int\limits_{Q} \frac{|s_{\eta}s_{\mu}|\cdot |\mu_{1}h_{\eta}^{3}| }{s_{\eta}h_{\eta}s_{\mu}h_{\mu} }dx_{1}
\ll \int\limits_{Q} \frac{|\mu_{1}h_{\eta}^{2}| }{|h_{\mu}|}dx_{1}
\end{equation*}
which has the unfortunate burden of having to deal with $h_{\mu}$ in the denominator, that could potentially be much
smaller than $h_{\eta}$. To deal with this obstacle we exploit the relation \eqref{eq:mu1h1-lam1h2=dist x1lam,mu} between $h_{\eta}$ and
$h_{\mu}$ once again, both being linear functions of $x_{1}$. We write
\begin{equation*}
\begin{split}
|\mu_{1} h_{\eta}^{3} |&=
|\mu_{1}h_{\eta} \cdot h_{\eta}^{2}| = |\mu_{1} h_{\eta} \pm \eta_{1}\mu_{1}d(\eta,\mu)| \cdot h_{\eta}^{2}
\le \eta_{1}h_{\eta}^{2}|h_{\mu}| + \eta_{1}\mu_{1}d(\eta,\mu)h_{\eta}^{2},
\end{split}
\end{equation*}
so that
\begin{equation}
\label{eq:int(Ii) relation}
\begin{split}
&\int\limits_{{I}_{i}} |s_{\eta}s_{\mu}| \frac{ |\mu_{1}h_{\eta}^{3}| }{s_{\eta}^{2}h_{\eta}^{2} + s_{\mu}^{2}h_{\mu}^{2}} dx_{1}
\le \int\limits_{{ I}_{i}} \frac{|s_{\eta}s_{\mu}| \, |\eta_{1}|  \, |h_{\mu}|\, h_{\eta}^{2}}
{s_{\eta}^{2}h_{\eta}^{2} + s_{\mu}^{2}h_{\mu}^{2}} dx_{1}+\int\limits_{{ I}_{i}} \frac{|s_{\eta}s_{\mu}\eta_{1}\mu_{1}| \, d(\eta,\mu) \, h_{\eta}^{2}}{s_{\eta}^{2}h_{\eta}^{2} + s_{\mu}^{2}h_{\mu}^{2}} dx_{1}.
\end{split}
\end{equation}

For the former integral on the r.h.s. of \eqref{eq:int(Ii) relation} we use the aforementioned idea
\eqref{eq:denominator lower CS} to bound the denominator from below
\begin{equation*}
\int\limits_{{ I}_{j}} \frac{|s_{\eta}s_{\mu}| \, |\eta_{1}|  \, |h_{\mu}| \, h_{\eta}^{2}}
{s_{\eta}^{2}h_{\eta}^{2} + s_{\mu}^{2}h_{\mu}^{2}} dx_{1} \ll
|\eta_{1}|\int\limits_{{I}_{j}} h_{\eta}dx_{1} \ll \sqrt{n}\cdot \frac{1}{\sqrt{n}} = 1,
\end{equation*}
since $|h_{\eta}| \le \frac{\pi}{2}$ is bounded, so that, after the integration w.r.t. $x_{2}$, we obtain
\begin{equation}
\label{eq:int Sij error diag}
\int\limits_{Q} \frac{|s_{\eta}s_{\mu}| \, |\eta_{1}|\, |h_{\mu}| \, h_{\eta}^{2}}
{s_{\eta}^{2}h_{\eta}^{2} + s_{\mu}^{2}h_{\mu}^{2}} dx_{1}dx_{2} \ll \frac{1}{\sqrt{n}}.
\end{equation}
Concerning the latter integral on the r.h.s. of \eqref{eq:int(Ii) relation} (or the double integral on $Q$),
since, as above, $h_{\eta}$ is bounded, we have
\begin{equation}
\label{eq:intSij error dist}
\int\limits_{Q} \frac{|s_{\eta}s_{\mu}\eta_{1}\mu_{1}| d(\eta,\mu)h_{\eta}^{2}}{s_{\eta}^{2}h_{\eta}^{2} + s_{\mu}^{2}h_{\mu}^{2}} dx_{1}dx_{2} \ll \int\limits_{Q} \frac{|s_{\eta}s_{\mu}\eta_{1}\mu_{1}| \, d(\eta,\mu)}{s_{\eta}^{2}h_{\eta}^{2} + s_{\mu}^{2}h_{\mu}^{2}} dx_{1}dx_{2} \ll \frac{1}{\sqrt{n}},
\end{equation}
readily evaluated above as the main term (see \eqref{eq:int Sij main <<1/sqrt(n)}).

Combining the estimates \eqref{eq:intSij error dist} and \eqref{eq:int Sij error diag}, and substituting them into
\eqref{eq:int(Ii) relation} (integrated w.r.t. $x_{2}$ on ${ I}_{j}$) yield the bound
\begin{equation}
\label{eq:error 1st}
\int\limits_{Q} |s_{\eta}s_{\mu}|  \frac{|\mu_{1}h_{\eta}^{3}| }{s_{\eta}^{2}h_{\eta}^{2} + s_{\mu}^{2}h_{\mu}^{2}} dx_{1}dx_{2} \ll \frac{1}{\sqrt{n}}
\end{equation}
for the $1$st integral on the r.h.s. of \eqref{eq:contr error h1^3 h2^3}, and its analogues for the other integrals on the r.h.s.
of \eqref{eq:contr error h1^3 h2^3} also follow (the $4$th is symmetric, whereas the $2$nd and the $3$rd are easier, with no need to deal with the denominator).
Substituting \eqref{eq:error 1st} and its analogues for the other three integrals in \eqref{eq:contr error h1^3 h2^3}
into \eqref{eq:contr error h1^3 h2^3}, we finally obtain a bound for the contribution of the error term
\begin{equation}
\label{eq:contr error << 1/sqrt(n)}
\int\limits_{Q} |s_{\eta}s_{\mu}| \frac{E_{\eta,\mu}(x_{1})}{s_{\eta}^{2}h_{\eta}^{2} +
s_{\mu}^{2}h^{2}_{\mu}} dx_{1}dx_{2} \ll \frac{1}{\sqrt{n}}.
\end{equation}
This, together with \eqref{eq:int Sij main <<1/sqrt(n)}, and \eqref{eq:int Sij<<main + error}, implies the statement
\eqref{eq:int Sij<<1/sqrt(n)} of Lemma \ref{lem:bnd lose sum} in this non-degenerate case \eqref{eq:d(lam,mu)>0}.\\

Finally, we treat the degenerate case $x_{1}^{\eta}=x_{1}^{\mu}$, or, equivalently,
\begin{equation}
\label{eq:mu1h1=lam1h2}
\eta_{1}h_{\mu}=\mu_{1}h_{\eta}.
\end{equation}
In this case the situation becomes easier to analyse, and the main terms of the expansion \eqref{eq:num exp dist err} vanishes
via \eqref{eq:mu1h1=lam1h2}, so that here \eqref{eq:num exp dist err} reads
\begin{equation}
\label{eq:degenerate all=error}
\left|\mu_{1}\sin(\eta_{1}x_{1})\cos(\mu_{1}x_{1})-\eta_{1}
\sin(\mu_{1}x_{1})\cos(\eta_{1}x_{1})\right| =E_{\eta,\mu}(x_{1}),
\end{equation}
with the same bound \eqref{eq:Elammu bnd} for the error term. The argument leading to \eqref{eq:int Sij error diag} above works unimpaired, with
no need to bound the extra term \eqref{eq:intSij error dist} here, as the precise identity \eqref{eq:mu1h1=lam1h2} reduces bounding the integral
\eqref{eq:int(Ii) relation} (after integrating w.r.t. $x_{2}$ on $I_{j}$)
to bounding \eqref{eq:int Sij error diag} with no remainder term \eqref{eq:intSij error dist}.
This shows that in this degenerate case, \eqref{eq:contr error << 1/sqrt(n)} holds, and, by \eqref{eq:degenerate all=error},
it also shows that the statement \eqref{eq:int Sij<<1/sqrt(n)} of Lemma \ref{lem:bnd lose sum} holds here.

\end{proof}

\section{Proof of Proposition \ref{prop:K1 asymp nonsing}: perturbative analysis on the non-singular set}
\label{sec:pert anal proof}


\begin{proof}
The proof of Proposition \ref{prop:K1 asymp nonsing} rests on a precise Taylor analysis for the density function $K_1$. We exploit the fact that the Gaussian expectation \eqref{eq:K1 fn density def} is an analytic function with respect to the parameters of the corresponding covariance matrix outside its singularities. It is then possible to Taylor expand $K_1$ explicitly, in the domain $\Qc \setminus \Qc_s$, where both $s_n(x)$ and the all the entries of $\Gamma_n(x)$ are small.

We first expand the factor
\begin{align}
\label{14:29Lon}
\frac{1 }{\sqrt{\var(f_{n}(x)) }}&= \frac{1}{\sqrt{1-s_n(x)}}=1+ \frac{1}{2} s_n(x) + \frac 3 8 s^2_n(x) +O(s^3_n(x)).
\end{align}
that appear in \eqref{eq:K1 fn density def}.
Next, we consider the Gaussian integral
\begin{align} \label{20:04 Par}
\Ic(\Gamma_n(x))=  \iint\limits_{\mathbb{R}^2} |z|  \exp \left\{ -\frac 1 2 z (I_2 + \Gamma_n(x))^{-1} z^t \right\} d z.
\end{align}
Observing that
\begin{align*}
 (I_2 + \Gamma_n(x))^{-1}&=I_2 - \Gamma_n(x) + \Gamma^2_n(x) +O(\Gamma^3_n(x)),
\end{align*}
we expand the exponential in \eqref{20:04 Par} as:
\begin{align*}
 & \exp \left\{ -\frac 1 2 z (I_2 + \Gamma_n(x))^{-1} z^t \right\}=   \exp \left\{ -\frac{z z^t}{2} \right\} \cdot \exp \left\{ \frac 1 2 z \left( \Gamma_n(x) - \Gamma^2_n(x) +O(\Gamma^3_n(x)) \right)  z^t \right\} \\
&\;\; = \exp \left\{ -\frac{z z^t}{2}\right\}  \left[ 1+ \frac 1 2 z \left( \Gamma_n(x) - \Gamma^2_n(x) +O(\Gamma^3_n(x)) \right) \right.  z^t \\
&\hspace{0.5cm} +\left. \frac{1}{2 \cdot 4} \left(  z ( \Gamma_n(x) - \Gamma^2_n(x) +O(\Gamma^3_n(x)))  z^t  \right)^2 + O\left(  z ( \Gamma_n(x) - \Gamma^2_n(x) +O(\Gamma^3_n(x)))  z^t  \right)^3  \right],
\end{align*}
so that the Gaussian integral \eqref{20:04 Par} is such that
\begin{align*}
\Ic(\Gamma_n(x))&=  \iint\limits_{\mathbb{R}^2} |z| \exp \left\{ -\frac{z z^t}{2} \right\}    \left[ 1+ \frac 1 2 z \Gamma_n(x)  z^t  -  \frac 1 2 z \Gamma^2_n(x)  z^t + \frac{1}{8} \left(  z  \Gamma_n(x)   z^t  \right)^2    \right] d z\\&+ O(\Gamma^3_n(x)).
\end{align*}
We introduce the following notation:
\begin{equation}
\label{eq:Ic0 def}
\Ic_0(\Gamma_n(x))=  \iint\limits_{\mathbb{R}^2} |z| \exp \left\{ -\frac{z z^t}{2} \right\},
\end{equation}
\begin{equation*}
\Ic_1(\Gamma_n(x))=  \frac 1 2  \iint\limits_{\mathbb{R}^2} |z| \exp \left\{ -\frac{z z^t}{2} \right\}  z \Gamma_n(x)  z^t d z,
\end{equation*}
\begin{equation*}
\Ic_2(\Gamma_n(x))=-  \frac 1 2  \iint\limits_{\mathbb{R}^2} |z| \exp \left\{ -\frac{z z^t}{2} \right\}      z \Gamma^2_n(x)  z^t  d z,
\end{equation*}
and
\begin{equation}
\label{eq:Ic3 def}
\Ic_3(\Gamma_n(x))=\frac{1}{8}
\iint\limits_{\mathbb{R}^2} |z| \exp \left\{ -\frac{z z^t}{2} \right\}   \left(  z  \Gamma_n(x)   z^t  \right)^2 d z.
\end{equation}
In Lemma \ref{lem:auxiliary 1}, postponed at the end of this section, we evaluate the terms $\Ic_i(\Gamma_n(x))$; we use Lemma \ref{lem:auxiliary 1} to write
\begin{align} \label{14:28Lon}
\Ic(\Gamma_n(x))&= \Ic_0(\Gamma_n(x)) + \Ic_1(\Gamma_n(x))+\Ic_2(\Gamma_n(x))+\Ic_3(\Gamma_n(x)) +  O(\Gamma^3_n(x)) \nonumber \\
&=\Ic_0(\Gamma_n(x))  + \Ic_1(\Gamma_n(x))+ \beta_1 \tr(\Gamma_n^2(x)) +\beta_2 [\tr(\Gamma_n(x))  ]^2   +  O(\Gamma^3_n(x))
\end{align}
where
\begin{align*}
 \beta_1=-\frac{3}{2^{3/2}}\pi^{3/2}+\frac{1}{8} \frac{15 \sqrt 2 }{4} \pi^{3/2}=- \frac{9 }{16\sqrt{2}}\pi^{3/2}, \hspace{0.5cm} \beta_2=  \frac{15 \sqrt 2}{64}  \pi^{3/2}.
\end{align*}
Finally we expand the factor
\begin{align*}
\frac{1}{\sqrt{  \det (I_2+\Gamma_n(x))}}.
\end{align*}
We know that $\Gamma_n(x)$ is symmetric and hence diagonalizable, we denote with $g_{1;n}(x)$ and $g_{2;n}(x)$ its eigenvalues. The eigenvalues of $I_2+\Gamma_n(x)$ are then $1+g_{1;n}(x)$ and $1+g_{2;n}(x)$, and we have
\begin{align*}
\det(I_2+\Gamma_n(x))&=[1+g_{1;n}(x)] [1+g_{2;n}(x)]
=1+ \tr(\Gamma_n(x)) + \det(\Gamma_n(x)).
\end{align*}
This implies that we can write
\begin{align*}
\frac{1}{\sqrt{  \det (I_2+\Gamma_n(x))}}&=1 - \frac{1}{2} \left[  \tr(\Gamma_n(x)) + \det(\Gamma_n(x)) \right] + \frac 3 8  \left[  \tr(\Gamma_n(x)) + \det(\Gamma_n(x)) \right]^2 + O(\Gamma^3_n(x)).
\end{align*}
We rewrite the third term on the right hand side of the last equation as follows
\begin{align*}
&\left[  \tr(\Gamma_n(x)) + \det(\Gamma_n(x)) \right]^2 \\
&= [g_{1;n}(x)+g_{2;n}(x)+ g_{1;n}(x) g_{2;n}(x)]^2\\
&= g^2_{1;n}(x)+ g^2_{2;n}(x) + g^2_{1;n}(x) g^2_{2;n}(x) + 2 g^2_{1;n}(x) g_{2;n}(x) + 2 g_{1;n}(x)g^2_{2;n}(x) +2 g_{1;n}(x)g_{2;n}(x)\\
&= g^2_{1;n}(x)+ g^2_{2;n}(x) +2 g_{1;n}(x) g_{2;n}(x) + O(\Gamma^3_n(x))\\
&= \left[  \tr(\Gamma_n(x)) \right]^2+ O(\Gamma^3_n(x))
\end{align*}
and we have that
\begin{align*}
\frac{1}{\sqrt{  \det (I_2+\Gamma_n(x))}}&= 1 - \frac 1 2  \tr(\Gamma_n(x)) - \frac 1 2  \det(\Gamma_n(x)) + \frac 3 8  [\tr(\Gamma_n(x))]^2 +  O(\Gamma^3_n(x)).
\end{align*}
Observing that  $g^2_{1;n}(x)$ and  $g^2_{2;n}(x)$ are eigenvalues of $\Gamma^2_n(x)$, we rewrite $\det (\Gamma_n(x))$ as follows
\begin{align*}
 \det (\Gamma_n(x)) &= g_{1;n}(x) g_{2;n}(x) \\
 &= \frac{1}{2} \left\{  [g_{1;n}(x) + g_{2;n}(x)]^2 - [g^2_{1;n}(x)+ g^2_{1;n}(x)] \right\} \\
 &= \frac 1 2 \left\{ \left[ \tr(\Gamma_n(x)) \right]^2 - \tr(\Gamma^2_n(x))  \right\},
\end{align*}
and we arrive at the following expansion
\begin{align} \label{14:27Lon}
\frac{1}{\sqrt{  \det (\Gamma_n(x)+I_2)}}&= 1 - \frac 1 2  \tr(\Gamma_n(x)) + \frac 1 8   [\tr(\Gamma_n(x))]^2 +\frac 1 4 \tr (\Gamma^2_n(x))+  O(\Gamma^3_n(x)).
\end{align}
In conclusion, in view of \eqref{14:29Lon}, \eqref{14:28Lon} and \eqref{14:27Lon}, we obtain the Taylor expansion of the density function $K_1$:
\begin{align*}
&K_{1,n}(x)= \frac{\sqrt n}{2^2 \sqrt{ \pi} } \left[  1+ \frac{1}{2} s_n(x) + \frac 3 8 s^2_n(x)  \right]\times \\
&\;\; \times \left[  \sqrt{2} \pi^{3/2} +\frac{3}{2^{3/2}} \pi^{3/2}  \tr (\Gamma_n(x)) - \frac{9 }{8 \cdot 2^{3/2}}\pi^{3/2} \tr(\Gamma_n^2(x)) + \frac{15 \sqrt 2}{8^2}  \pi^{3/2} [\tr(\Gamma_n(x))  ]^2  \right] \\
& \;\; \times \left[ 1 - \frac 1 2  \tr(\Gamma_n(x)) + \frac 1 8   [\tr(\Gamma_n(x))]^2 +\frac 1 4 \tr (\Gamma^2_n(x)) \right] + O( \sqrt n  \cdot  s^3_n(x))+O( \sqrt n  \cdot  \Gamma^3_n(x)) \\
&= \frac{\sqrt n \pi^{3/2}}{2^2 \sqrt{ \pi} } \left[  \sqrt 2 + \frac{s_n(x)}{\sqrt 2} + \frac{1}{2 \sqrt 2} \tr(\Gamma_n(x)) + \frac{3}{4 \sqrt 2} s^2_n(x) + \frac{1}{4 \sqrt 2} s_n(x) \tr(\Gamma_n(x)) \right.\\
&\;\; \left.- \frac{1}{16 \sqrt 2}\tr (\Gamma^2_n(x)) - \frac{1}{32 \sqrt 2 }    [\tr(\Gamma_n(x))]^2 \right] + O( \sqrt n \cdot s^3_n(x))+O( \sqrt n  \cdot  \Gamma^3_n(x)).
\end{align*}
\end{proof}
We state now Lemma \ref{lem:auxiliary 1}; the proof of this lemma is relegated to Appendix \ref{sec:auxiliary results}.
\begin{lemma} We have
\label{lem:auxiliary 1}
\begin{align*}
&\Ic_0(\Gamma_n(x))= \sqrt{2} \pi^{3/2},\\
&\Ic_1(\Gamma_n(x))= \frac{3}{2^{3/2}} \pi^{3/2}  \tr (\Gamma_n(x)),\\
&\Ic_2(\Gamma_n(x))=  -  \frac{ 3 }{2^{3/2}}  \pi^{3/2} \tr (\Gamma^2_n(x)),\\
&\Ic_3(\Gamma_n(x))= \frac{15 \sqrt 2}{64}  \pi^{3/2}  \left\{ 2 \, \tr(\Gamma_n^2(x)) + [\tr(\Gamma_n(x)) ]^2  \right\}.
\end{align*}
\end{lemma}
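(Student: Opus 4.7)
The plan is to convert all four two-dimensional Gaussian integrals into products of a radial and an angular integral using polar coordinates $z = r(\cos\theta,\sin\theta)$, and then exploit the symmetry of $\Gamma_n(x)$ to recognise the angular sums as $\tr(\Gamma_n)$, $\tr(\Gamma_n^2)$, and $(\tr\Gamma_n)^2$.

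As an initial preparatory step, I would tabulate the radial and angular moments that will appear. For the radial part we need $I_m:=\int_0^\infty r^m e^{-r^2/2}\,dr$; the only relevant values are $I_2=\sqrt{\pi/2}$, $I_4=3\sqrt{\pi/2}$ and $I_6=15\sqrt{\pi/2}$, all obtained from $\Gamma(k+1/2)$ identities. For the angular part, with $c=\cos\theta$, $s=\sin\theta$, I will use $\int_0^{2\pi}\!d\theta=2\pi$, $\int_0^{2\pi}c^2d\theta=\int_0^{2\pi}s^2d\theta=\pi$, $\int_0^{2\pi}c^4d\theta=\int_0^{2\pi}s^4d\theta=3\pi/4$, $\int_0^{2\pi}c^2s^2d\theta=\pi/4$, and the vanishing of any integral of an odd trigonometric monomial.

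The first two evaluations are then immediate. For $\Ic_0$, polar coordinates give $\Ic_0 = 2\pi\cdot I_2 = \sqrt{2}\pi^{3/2}$. For $\Ic_1$, writing $\Gamma_n=(g_{ij})$ and $z\Gamma_n z^t = g_{11}z_1^2 + 2g_{12}z_1z_2 + g_{22}z_2^2$, the $g_{12}$-term integrates to zero by parity, and the diagonal terms each contribute $I_4\cdot\pi$; this yields $\Ic_1 = \tfrac12(g_{11}+g_{22})\cdot 3\sqrt{\pi/2}\cdot\pi = \tfrac{3}{2^{3/2}}\pi^{3/2}\tr(\Gamma_n)$. The computation of $\Ic_2$ is essentially identical: replace $\Gamma_n$ by $\Gamma_n^2$ (still symmetric) and carry the minus sign through.

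The main computation is $\Ic_3$, which is where I expect the bookkeeping to be slightly delicate. Expanding
\begin{equation*}
(z\Gamma_n z^t)^2 = g_{11}^2 z_1^4 + g_{22}^2 z_2^4 + (4g_{12}^2 + 2g_{11}g_{22})z_1^2z_2^2 + 4g_{11}g_{12}z_1^3 z_2 + 4g_{12}g_{22}z_1 z_2^3,
\end{equation*}
the last two monomials integrate to zero by parity. The radial factor throughout is $I_6 = 15\sqrt{\pi/2}$, and, combining with the angular integrals above,
\begin{equation*}
\Ic_3 = \tfrac{1}{8}\cdot 15\sqrt{\pi/2}\left[\tfrac{3\pi}{4}(g_{11}^2+g_{22}^2) + \tfrac{\pi}{4}(4g_{12}^2 + 2g_{11}g_{22})\right].
\end{equation*}
The final step is algebraic: using the symmetry $g_{12}=g_{21}$, one verifies $\tr(\Gamma_n^2)=g_{11}^2+2g_{12}^2+g_{22}^2$ and $(\tr\Gamma_n)^2 = g_{11}^2+2g_{11}g_{22}+g_{22}^2$, so that $2\tr(\Gamma_n^2)+(\tr\Gamma_n)^2 = 3(g_{11}^2+g_{22}^2)+4g_{12}^2+2g_{11}g_{22}$, which matches the bracketed combination above up to the factor $\pi/4$. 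Substituting yields the claimed $\Ic_3 = \tfrac{15\sqrt{2}}{64}\pi^{3/2}\bigl(2\tr(\Gamma_n^2)+(\tr\Gamma_n)^2\bigr)$. No real obstacle arises here; the only place to be careful is that $\Gamma_n$ is not diagonal, so the off-diagonal $g_{12}$ feeds into $\tr(\Gamma_n^2)$ but not into $(\tr\Gamma_n)^2$, and these two contributions must reassemble correctly — precisely the verification carried out in the final identity above.
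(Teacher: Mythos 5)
Your proposal is correct and follows essentially the same route as the paper: polar coordinates reduce everything to the radial moments $\int_0^\infty \rho^{2},\rho^{4},\rho^{6}\,e^{-\rho^2/2}d\rho$ and the angular moments of $\cos^2$, $\cos^4$, $\cos^2\sin^2$, with parity killing the odd monomials, and the final regrouping $3(g_{11}^2+g_{22}^2)+4g_{12}^2+2g_{11}g_{22}=2\tr(\Gamma_n^2)+[\tr(\Gamma_n)]^2$ matches the paper's computation exactly. All constants check out.
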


\section{Proof of Lemma \ref{lem:L1 int, Ups bnd}: Boundary effect and error term}
\label{sec:bnd eff err sec}

We first prove  Lemma \ref{lem:L1 int, Ups bnd}\ref{it:int L(x) val}. By the definition \eqref{eq:Ln asympt K1 def} of $L_{n}(x)$, we have
\begin{equation} 
\label{eq:int Ln ind}
\begin{split}
\int\limits_{\Qc} L_{n}(x) d x &=  \frac{\sqrt n \pi}{4 \sqrt 2  } \int\limits_{\Qc} \left[    s_n(x) + \frac{1}{2 }\tr(\Gamma_n(x)) + \frac{3}{4 } s^2_n(x) + \frac{1}{4 } s_n(x)\tr(\Gamma_n(x))  \right]  d x  \\
&\;\;+ \frac{\sqrt n \pi}{4 \sqrt 2 }
\int\limits_{\Qc}  \left[ - \frac{1}{16 }\tr(\Gamma^2_n(x)) - \frac{1}{32  }    [\tr(\Gamma_n(x))]^2 \right] d x.
\end{split}
\end{equation}

The following technical lemma evaluates the individual integrals encountered within \eqref{eq:int Ln ind}.

\begin{lemma}
\label{21:33 Lon}

The integrals of the individual terms on the r.h.s. of \eqref{eq:int Ln ind} admit the following asymptotics.

\begin{enumerate}[a.]

\item  $$\int\limits_{\Qc}  s_n(x) d x=0.$$

\item $$\int\limits_Q\tr(\Gamma_n(x)) d x =   -  \frac{6}{ N_n } + O \left(\frac{1}{N^2_n} \right) + O\left( N_{n}^{-l-1}  |\Mcc_{l}(n)|\right).$$

\item $$\int\limits_{\Qc}  s^2_n(x) d x=\frac{5}{N_n}.$$

\item $$\int\limits_Q s_n(x)
\tr(\Gamma_n(x)) d x =   \frac{2}{ N_n}  +  O \left(\frac{1}{N^2_n} \right) + O\left( N_{n}^{-l-1}  |\Mcc_{l}(n)|\right).$$

\item $$\int\limits_{\Qc} \tr(\Gamma^2_n(x)) dx =\frac{2^2}{N_n} \left[1+2^5 M_4(n) \right] +O\left(\frac{1}{N^2_n} \right)+ O\left(n^{-1-1/2} N_n^{-l-3}  |\Mcc_{l}(n)|\right).$$

\item $$\int\limits_{\Qc} [\tr(\Gamma_n(x))]^2 d x=\frac{2^2}{N_n} \left[2^6 M_4(n)-3 \right]+O\left(\frac{1}{N^2_n} \right)+  O\left( N_{n}^{-l-1}  |\Mcc_{l}(n)|\right).$$

\item \label{s^3} $$\int\limits_{\Qc} s^3_n(x) d x=O \left(\frac{1}{N^2_n} \right).$$

\item \label{trace_cubed} $$\int\limits_{\Qc} [ \tr( \Gamma_n(x)) ]^3 d x=O \left(\frac{1}{N^2_n} \right).$$

\end{enumerate}

\end{lemma}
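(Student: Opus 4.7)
The plan is to prove each of the eight estimates by expanding the integrand into a finite sum of products of trigonometric monomials in $x_1,x_2$ indexed by lattice points in $\Ec_n/\sim$, and then invoking the orthogonality of the trigonometric system on $[0,1]$ to collapse the resulting sums to a handful of diagonal terms evaluable in closed form. For parts (b), (d), (e), (f), the matrix $\Gamma_n(x)$ carries a factor $1/v_n(x)=(1-s_n(x))^{-1}$, which away from the measure-zero degenerate set $\Hc_n$ admits the geometric expansion
\begin{equation*}
\frac{1}{v_n(x)}=\sum_{k=0}^{l-1}s_n(x)^{k}+\frac{s_n(x)^{l}}{v_n(x)}.
\end{equation*}
Substituting this, the first few $k$ produce the explicit main and secondary constants, while the remainder, after multiplying by the polynomial factors $d_{i}d_{j}$ or $b_{ij}b_{i'j'}$ and expanding, reduces via orthogonality to a weighted count indexed by length-$l$ semi-correlations, which accounts for the $O(N_{n}^{-l-1}|\Mcc_{l}(n)|)$ error terms (with an appropriate power of $n$ in part (e) reflecting the $\Gamma_n^2$ normalisation).

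Parts (a), (c), (g), (h) are evaluated by direct orthogonality with no $1/v_n$ factor. Part (a) is immediate since every frequency appearing in $s_n(x)$ is nonzero. Part (c) is a single-step orthogonality: expanding $s_n^2$ as a double sum over $(\mu,\nu)\in(\Ec_n/\sim)^2$, the constraint that $\mu,\nu\in\Ec_n$ have the same norm forces $\mu\sim\nu$ whenever $|\mu_1|=|\nu_1|$, so only $\mu=\nu$ survives; each diagonal summand integrates to $5/4$, and the total is $(4/N_n)^{2}\cdot(N_n/4)\cdot(5/4)=5/N_n$. Parts (g), (h) are the cubic analogues: the triple-sum orthogonality leaves only heavily-diagonal configurations, in total $O(N_n)$ non-zero terms of size $O(1)$ each; together with the prefactor $(4/N_n)^{3}$ this delivers the $O(1/N_n^2)$ bound. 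For (h), one first writes $\tr(\Gamma_n)=-6/N_n+R_n$ with $R_n$ mean-zero and oscillatory, and then cubes.

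Parts (b), (d), (e), (f) are the core of the lemma. In (b), the integral of the $b_{11}+b_{22}$ piece vanishes (all its Fourier frequencies are nonzero), while the diagonal evaluation of $-\tfrac{128}{nN_n^{2}}\int_{\Qc}(d_1^{2}+d_2^{2})\,dx$ uses the elementary identities $\int_0^1\sin^{2}(2\mu_1\pi x_1)\,dx_1=\tfrac12$, $\int_0^1\sin^{4}(\mu_2\pi x_2)\,dx_2=\tfrac38$, and $\sum_{\mu\in\Ec_n/\sim}\mu_1^{2}=nN_n/8$ to give the main term $-6/N_n$. Parts (d), (e), (f) follow the same template, but their closed-form coefficients now involve angular moments: $\hat{\nu}_n(4)=M_4(n)$ enters through identities such as $\sum_{\mu\in\Ec_n}\mu_1^{2}\mu_2^{2}=\tfrac{n^{2}N_n}{8}(1-\hat{\nu}_n(4))$, which surface in $\tr(\Gamma_n^{2})$ and $[\tr(\Gamma_n)]^{2}$; the next term ($k=1$) in the $1/v_n$ expansion contributes the explicit $O(1/N_n^{2})$ correction, and higher $k$ feed into the semi-correlation remainder. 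The main obstacle is the combinatorial bookkeeping in (e) and (f), which involves up-to-four-fold sums over $\Ec_n/\sim$ coming from products like $d_i^{2}d_j^{2}/v_n^{2}$ and $b_{ij}^{2}$; each non-vanishing configuration family must be reduced separately in terms of $n$, $N_n$, and $\hat{\nu}_n(4)$. A subsidiary technical point is justifying term-by-term integration of the $1/v_n$ series over all of $\Qc$ given that $v_n$ vanishes on $\Hc_n$, which one resolves by integrating first on $\Qc\setminus\Qc_s$ and estimating the excised contribution via Proposition \ref{prop:sing meas<<semi-corr}.
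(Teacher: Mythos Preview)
Your proposal is correct and follows essentially the same route as the paper: direct orthogonality for (a), (c), (g), (h), and for (b), (d), (e), (f) the expansion of $1/v_n(x)$ into a leading $1$ plus an $O(s_n)$ remainder, with the contribution of the latter controlled on $\Qc\setminus\Qc_s$ by trigonometric orthogonality and on $\Qc_s$ by Proposition~\ref{prop:sing meas<<semi-corr}. One small caveat on (h): writing $\tr(\Gamma_n)=-6/N_n+R_n$ with $R_n$ ``mean-zero and oscillatory'' is only true after first replacing $1/v_n$ by $1+O(s_n)$ and absorbing the singular-set error, since the raw $d_i^2/v_n$ term is neither mean-zero nor oscillatory; the paper handles (h) in the same spirit but equally tersely, bounding only the $b$-part explicitly.
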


The proof of Lemma \ref{21:33 Lon} is postponed till Section \ref{sec: proof of lemma lem:L1 int, Ups bnd auxiliary results}.

\begin{proof}[Proof of Lemma \ref{lem:L1 int, Ups bnd} assuming Lemma \ref{21:33 Lon}]

We substitute the various asymptotic statements of Lemma \ref{21:33 Lon} into \eqref{eq:int Ln ind} to obtain
\begin{align*} 
&\int\limits_{\Qc} L_{n}(x) d x \\
&=  \frac{\sqrt n \pi}{4 \sqrt 2  } \left\{- \frac 1 2  \frac{6}{ N_n } + \frac 3 4 \frac{5}{N_n}+ \frac 1 4 \frac{2}{ N_n} - \frac{1}{16} \frac{2^2}{N_n} \left[1+2^5 M_4(n) \right] - \frac{1}{32} \frac{2^2}{N_n} \left[2^6 M_4(n)-3 \right] \right\} \\
&\;\;+O\left(\frac{\sqrt n }{N^2_n} \right)+ O\left(n^{-1/2} N_{n}^{-l-1}  |\Mcc_{l}(n)|\right)\\
&=  \frac{\sqrt n \pi}{4 \sqrt 2  }\frac{1}{N_n} \left[ \frac{11}{2^3} - 2^4 M_4(n) \right] +O\left(\frac{\sqrt n }{N^2_n} \right)+ O\left(n^{1/2} N_{n}^{-l-1}  |\Mcc_{l}(n)|\right).
\end{align*}
The latter formula, expressed in terms of the sequence $\{\widehat{\nu_n}(4)\}$ of the $4$th Fourier coefficients of $\nu_n$ gives the statement of Lemma \ref{lem:L1 int, Ups bnd}\ref{it:int L(x) val}. The proof of Lemma \ref{lem:L1 int, Ups bnd}\ref{it:int Ups bnd} follows immediately from \eqref{eq:Ups error bnd} and Lemma \ref{21:33 Lon} parts \ref{s^3} and \ref{trace_cubed}.

\end{proof}

\section{Proof of Theorem \ref{thm:semi-correlations Lyosha}: length-$l$ spectral semi-correlations}
\label{sec:Lyosha proof}

We begin by proving Theorem~\ref{thm:semi-correlations Lyosha} in the case of square-free numbers. To this end, for any fixed $K\ge 1$ we introduce the set
\[ \Omega_{M,K}=\{n\le M,\ \text{rad}(n)=n,\ p\vert n\in S\Rightarrow\ p\ge K\},\]
and let $\Omega_M:=\Omega_{M,1}=S\cap [1,M].$
The following Lemma is borrowed from~\cite{BB}.
\begin{lemma}\label{bb1}
For $m\in \Omega_{M,K},$ let $m=p_1\cdotp_2\cdot\dots p_r$ be its factorization with $K< p_1<p_2\dots<p_r.$ Then as $M\to\infty$ we have $p_s>2^{s\Phi(s)}$ for $1\le s\le r$ holds for all $m\in\Omega_{M,K}\setminus \Omega_{M,K}^{(1)},$ where the exceptional set $\Omega_{M,K}^{(1)}$ has cardinality
\[|\Omega_{M,K}^{(1)}|\le \eta(K, \Phi)|\Omega_{M,K}|\]
with $\eta(K, \Phi)\to 0$ as $K\to\infty.$ If $\Phi(x)=o(\log x),$ then we can choose $\eta(K,\Phi)=K^{-1+\delta}$ for every fixed $\delta>0.$
\end{lemma}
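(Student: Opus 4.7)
The plan is a union bound combined with a Chebyshev-type sieve, followed by Mertens' theorem for primes $\equiv 1 \pmod{4}$. Write $\Omega_{M,K}^{(1)} \subseteq \bigcup_{s \ge 1} E_s$, where $E_s \subseteq \Omega_{M,K}$ is the collection of $m$ whose $s$-th smallest prime factor $p_s$ satisfies $p_s \le 2^{s\Phi(s)}$. Since every prime factor of $m \in \Omega_{M,K}$ already exceeds $K$, the set $E_s$ is automatically empty unless $2^{s\Phi(s)} > K$, which truncates the outer union from below.

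For each relevant $s$, every $m \in E_s$ is divisible by some product $p_1 p_2 \cdots p_s$ of $s$ distinct primes from $S$ lying in the window $(K, 2^{s\Phi(s)}]$. Over-counting over all such products and invoking Landau's theorem uniformly to bound the number of squarefree $m \le M$ supported on $S$ and divisible by a fixed $d$, I would obtain the Chebyshev bound
\begin{equation*}
|E_s| \ll \frac{M}{\sqrt{\log M}} \cdot \frac{1}{s!} \left( \sum_{\substack{K < p \le 2^{s\Phi(s)} \\ p \equiv 1\ (\text{mod }4)}} \frac{1}{p} \right)^{\! s}.
\end{equation*}
Mertens' theorem for primes in an arithmetic progression then controls the inner sum by $T_s := \tfrac{1}{2}\log\bigl(s\Phi(s)\log 2 / \log K\bigr) + O(1)$.

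Comparing with the Landau asymptotic $|\Omega_{M,K}| \gg M/\sqrt{\log M}$ (whose implicit constant depends only mildly on $K$), I obtain the proportion bound
\begin{equation*}
\frac{|\Omega_{M,K}^{(1)}|}{|\Omega_{M,K}|} \ll \sum_{s:\ 2^{s\Phi(s)} > K} \frac{T_s^{\, s}}{s!} \;=:\; \eta(K,\Phi).
\end{equation*}
The qualitative assertion $\eta(K,\Phi) \to 0$ as $K \to \infty$ then follows because, for $s$ near the threshold $s\Phi(s) \approx \log K$, $T_s$ is $O(1)$ while $s \ge 1$ penalises the prefactor; and for large $s$ Stirling's bound $s! \gg (s/e)^s$ combined with $T_s \ll \log s$ forces super-exponential decay of the summands. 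A termwise-convergence plus tail argument then yields the claimed limit.

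The step I expect to be the main obstacle is upgrading the qualitative statement to the quantitative bound $\eta(K,\Phi) \ll K^{-1+\delta}$ under the hypothesis $\Phi(x) = o(\log x)$. Here the crude Landau count of multiples is insufficient: one needs a Selberg--Delange-type expansion to extract the sharp $K$-dependence of the Euler-product tail $\prod_{p \le K,\ p \equiv 1\ (\text{mod }4)}(1 - 1/p)$, which is the source of the decisive power-saving factor. Carefully tracking this saving through the sum on $s$, and verifying that the dominant minimal-$s$ contribution (which rules the regime of slowly growing $\Phi$) matches the claimed $K^{-1+\delta}$, would complete the argument.
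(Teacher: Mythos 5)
First, a point of comparison: the paper does not prove this lemma at all --- it is quoted from Bombieri--Bourgain \cite{BB} and used as a black box --- so there is no in-paper argument to measure you against. Your union-bound strategy is the standard one for such ``anatomy of integers'' statements and is surely the intended route. The qualitative part of your sketch is essentially sound: $E_s$ is empty unless $2^{s\Phi(s)}>K$, so the union starts at an index $s_0(K)\to\infty$; the Chebyshev bound $|E_s|\ll \frac{M}{\sqrt{\log M}}\,T_s^s/s!$ follows from the divisor count together with Mertens in the progression $1\bmod 4$; and since $T_s^s/s!\le (eT_s/s)^s$ with $T_s\ll\log s$, the tail over $s\ge s_0(K)$ tends to $0$. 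One technicality you must not gloss over is the uniformity of the count of multiples: the bound $\#\{m\le M:\ m\in S\ \text{squarefree},\ d\mid m\}\ll M/(d\sqrt{\log M})$ degrades when $d$ approaches $M$, and your $d=q_1\cdots q_s$ can be as large as $2^{s^2\Phi(s)}$. The standard fix is to first discard the $m$ with abnormally many prime factors (all exceeding $K$), e.g.\ by Rankin's trick, which restricts to a range of $s$ where $d\le M^{o(1)}$ and the uniform Landau-type count applies.

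The genuine flaw is in your final paragraph. The power saving $K^{-1+\delta}$ does \emph{not} come from the Euler product $\prod_{p\le K,\ p\equiv 1 \bmod 4}(1-1/p)$: that product is $\asymp(\log K)^{-1/2}$, only logarithmic in $K$, so no Selberg--Delange refinement of it can ever produce a power of $K$; moreover it cancels between numerator and denominator if you bound the multiples of $d$ inside $\Omega_{M,K}$ by $|\Omega_{M/d,K}|$. The saving is already present in the bound you wrote, in the factor $1/s_0!$ at the minimal contributing index. Indeed $s_0\Phi(s_0)\asymp\log K$ and $T_{s_0}=O(1)$, so the leading term is $\ll (C/s_0)^{s_0}=\exp\left(-(1+o(1))\,s_0\log s_0\right)$, while $s_0\log s_0=\log_2 K\cdot\frac{\log s_0}{\Phi(s_0)}$; the hypothesis $\Phi(x)=o(\log x)$ forces $\frac{\log s_0}{\Phi(s_0)}\to\infty$, so this single term is $\ll_A K^{-A}$ for every fixed $A$, and the later terms are dominated by it up to a convergent factor. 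Thus your own union bound already yields $\eta(K,\Phi)\ll_A K^{-A}$, comfortably stronger than the stated $K^{-1+\delta}$ (which is merely what the application requires). In short: the step you single out as the main obstacle is not an obstacle, and the tool you propose for it would not work; what actually needs to be supplied is the routine uniformity argument described above.
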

The next proposition shows that the number of solutions of~\eqref{eq:M semi-corr def} is small for almost all $m\in \Omega_{M,K}.$
\begin{proposition}\label{squarefree}
Let $\delta>0$ be fixed. If $K\ge K(\delta)$ and $M\to\infty,$ then for all but $K^{-1+\delta}|\Omega_{M,K}|$ elements $m\in\Omega_{M,K}$ the equation~\eqref{eq:M semi-corr def} has $O(N_m^k)$ solutions.
\end{proposition}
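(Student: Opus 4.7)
The plan is to adapt the Bombieri--Bourgain framework for spectral correlations to the semi-correlation setting, exploiting the fact that prescribing vanishing of only the first coordinates yields a \emph{single} real constraint, which will behave as a linear equation on the largest Gaussian prime.

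First I will apply Lemma \ref{bb1} with a slowly-growing function $\Phi$ (say $\Phi(s) = (\log(s+2))^{1/2}$) to discard an exceptional subset of size $\le K^{-1+\delta}|\Omega_{M,K}|$, and thereafter assume that $n = p_1\cdots p_r \in \Omega_{M,K}$ is \emph{lacunary}, meaning $p_s > 2^{s\Phi(s)}$ for each $s$; in particular $p_r$ is enormously larger than $p_1\cdots p_{r-1}$. For each $p_j$ I fix a Gaussian prime $\pi_j$ with $\pi_j\bar{\pi}_j = p_j$, so that every $\mu\in \Ec_n$ is uniquely $\mu = i^{\alpha}\prod_{j=1}^{r}\pi_j^{\epsilon_j}$ with $\alpha\in\{0,1,2,3\}$, $\epsilon_j\in\{+,-\}$, and $\pi_j^- := \bar{\pi}_j$. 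Under this parametrisation, an $l$-tuple in $\Mcc_l(n)$ corresponds to data $\mathcal{P} = (\alpha_s, \epsilon_{j,s})_{s\le l,\, j\le r}$ satisfying
\[
\sum_{s=1}^{l}\mathrm{Re}\!\Bigl( i^{\alpha_s}\prod_{j=1}^{r}\pi_j^{\epsilon_{j,s}}\Bigr) = 0.
\]
Separating the role of $\pi_r$ via $S_\pm = \{s:\epsilon_{r,s} = \pm\}$ and $C_\pm = \sum_{s\in S_\pm} i^{\alpha_s}\prod_{j<r}\pi_j^{\epsilon_{j,s}} \in \Z[i]$, this becomes $\mathrm{Re}\bigl((C_+ + \overline{C_-})\pi_r\bigr) = 0$.

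I will call $\mathcal{P}$ \emph{non-degenerate} when $C_+ + \overline{C_-}\neq 0$; in that case the equation constrains $\pi_r$ to a specific line through the origin in $\Z[i]$, and combining with $|\pi_r|^2 = p_r$ determines $p_r$ (and $\pi_r$ itself, up to units) as an explicit function of $\mathcal{P}$ and $\pi_1,\ldots,\pi_{r-1}$. To control the contribution of non-degenerate patterns I will fix $r$, note that there are $O_l(2^{lr})$ such patterns, multiply by the number $O\bigl((\log\log M)^{r-1}/(r-1)!\bigr)$ of admissible tuples $(p_1,\ldots,p_{r-1})$, and use the lacunarity (which forces $p_r \ge 2^{r\Phi(r)}$ and so truncates $r$ early) to conclude that the resulting total is $o(K^{-1+\delta}|\Omega_{M,K}|)$ once $K$ is large in terms of $\delta$ and $l$. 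This absorbs all $n$ carrying a non-degenerate semi-correlation into the allowed exceptional set.

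The remaining contribution comes from \emph{degenerate} patterns ($C_+ + \overline{C_-} = 0$), which pose no constraint on $p_r$. The hard part will be to show that such patterns are in essentially $O_l(1)$-to-one correspondence with the ``semi-diagonal'' tuples, namely those in which the $l$-tuple pairs up so that within each pair the first coordinates are negatives of one another; a direct count gives $O(N_n^k)$ semi-diagonal tuples, matching the desired bound. I plan to handle this inductively on $r$: the relation $C_+ + \overline{C_-} = 0$ is itself a semi-correlation-type equation with respect to $p_1,\ldots,p_{r-1}$, to which the same Gaussian-prime factorisation and split can be applied, reducing $r$ by one at each step. The main technical difficulty I foresee is controlling exotic cancellations in $\Z[i]$ coming from the unit group $\{\pm 1, \pm i\}$ (and, should it arise, from the prime $2$), which will only contribute bounded factors depending on $l$ but will require careful bookkeeping through the induction.
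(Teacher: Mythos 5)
Your skeleton is the same as the paper's: invoke Lemma~\ref{bb1} to pass to lacunary $m=p_1\cdots p_r$, factor lattice points over Gaussian primes, isolate the largest prime to rewrite the relation as $\mathrm{Re}\bigl(\pi_r(C_++\overline{C_-})\bigr)=0$, and observe that in the non-degenerate case this pins down $\arg\pi_r$ and hence $p_r$. That part is sound. But your handling of the degenerate case $C_++\overline{C_-}=0$ is where the proof actually lives, and your plan for it has a genuine gap. The paper does not attempt to classify degenerate patterns as ``semi-diagonal'' (which is false for individual $n$: special $n$ do admit non-diagonal vanishing sums, and proving they are rare is essentially Bombieri--Bourgain all over again). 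Instead it introduces a minimality device: it defines $\mathcal{F}_s$ as the set of $n$ with $\omega(n)=s$ that lie in the bad set $\tilde S$ while \emph{every proper divisor} lies in $S\setminus\tilde S$, and works with the unique minimal bad divisor $p_1\cdots p_s$ of a given $m$. Then degeneracy is impossible outright: $\mathrm{Re}(A_{s-1})+\mathrm{Re}(B_{s-1})=0$ is itself a nontrivial semi-correlation for the proper divisor $p_1\cdots p_{s-1}$, contradicting minimality. Your proposed induction on $r$ would instead have to produce, at every level, a new non-degenerate sub-case determining $p_{r-1},p_{r-2},\dots$, and to aggregate the exceptional sets of $n$ arising at each level --- exactly the bookkeeping the minimality trick is designed to eliminate. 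As written, your reduction never closes.

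The counting step is also off. The number of admissible tuples $(p_1,\dots,p_{r-1})$ is not $O\bigl((\log\log M)^{r-1}/(r-1)!\bigr)$; that expression is missing the main factor (already for $r=2$ there are $\asymp M/\log M$ choices of $p_1$). The paper organizes the count differently: each bad $m$ maps to $m/p_s\in\Omega_{M/\max\{K,2^{s\Phi(s)}\},K}$, and this map is at most $2^{2ks}$-to-one because $p_s$ is determined by $p_1<\dots<p_{s-1}$ together with one of at most $2^{2k(s-1)}$ choices of $(A_{s-1},B_{s-1})$. The bound is then $\sum_s 2^{2ks}\,|\Omega_{M/\max\{K,2^{s\Phi(s)}\},K}|$, where the amplification $2^{2ks}$ is indexed by the number of prime factors of the \emph{minimal} bad divisor and is beaten by the saving $1/p_s\le 2^{-s\Phi(s)}$ for large $s$ and is $K^{o(1)}$ for small $s$. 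In your version the amplification is $2^{lr}$ with $r=\omega(n)$, and multiplying it against a count of all tuples $(p_1,\dots,p_{r-1})$ (rather than conditioning on the value of $m/p_r$) loses a power of $\log M$ over the target bound $K^{-1+\delta}|\Omega_{M,K}|$. The non-degenerate count can be repaired along the paper's lines, but the degenerate case needs the minimality idea (or an equally strong substitute) rather than the sketched induction.
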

\begin{proof} Let $\tilde{S}\in S$ be the subset for which~\eqref{eq:M semi-corr def} has a nontrivial solution. For any prime $p$ we write $p=\pi\cdot\bar{\pi}$ where $\pi$ is the corresponding Gaussian prime with $\text{arg}(\pi)\in [0,\pi/2].$ For any integer $s\ge 1$ we introduce the set
\[\mathcal{F}_s=\left\{n\in\Omega_{M,K},\ \omega(n)=s,\ n\in \tilde{S};\ \forall d\ne n, d\vert n\Rightarrow d\in S\setminus \tilde{S}\right\}.\]
Fix $s\ge 1$ and consider $n\in\mathcal{F}_s$ with a given factorization $n=p_1\cdot p_2\dots p_s,$ $K<p_1<p_2<\dots<p_s.$ We have that there exist  integer points $\{\xi\}_{j=1}^{2k}$ with $\|\xi_j\|=\sqrt{n}$ and $\varepsilon_j\in \{-1,0,1\},$ $1\le j\le 2k$ with vanishing linear combination
\[\sum_{j=1}^{2k}\varepsilon_i\text{Re}(\xi_j)=0.\]
Each point $\xi_r$ can be uniquely written as a product $\xi_r=i^{k_{\xi_r}}\prod_{j\le s }\pi_{j,r}^*$ where  each $\pi_{j,r}^*\in \{\pi_j,\bar{\pi}_j\}$ and $k_{\xi_r}\in\{0,1,2,3\}.$
We now regroup the terms in the last expression by collecting $\pi_s$ and $\bar{\pi_s}$ into different  summands to end up with an equivalent form
\begin{equation}\label{iterate1}
\text{Re}(\pi_sA_{s-1})+\text{Re}(\bar{\pi_s}B_{s-1})=0,
\end{equation}
where each $A_{s-1},B_{s-1}$ consists of the sum of at most $2k-1$ terms composed of first $(s-1)$ Gaussian primes. Setting $\pi_s=|\pi_s|e^{i\phi_s},$ $A_{s-1}=|A_{s-1}|e^{ia_{s-1}}$ and $B_{s-1}=|B_{s-1}|e^{ib_{s-1}},$ the equation \eqref{iterate1} can be rewritten as
\[|A_{s-1}|\cos (\phi_s+a_{s-1})+|B_{s-1}|\cos (b_{s-1}-\phi_s)=0.\] Using elementary trigonometric identities we get
\[\cos(\phi_s)(|A_{s-1}|\cos (a_{s-1})+|B_{s-1}|\cos (b_{s-1}))=\sin(\phi_s)(|A_{s-1}|\sin (a_{s-1})-|B_{s-1}|\sin (b_{s-1})).\]
Consequently,
\[\tan (\phi_s)=\frac{|A_{s-1}|\cos (a_{s-1})+|B_{s-1}|\cos (b_{s-1})}{|A_{s-1}|\sin (a_{s-1})-|B_{s-1}|\sin (b_{s-1})}\]
and so $\tan (\phi_s)$ is determined uniquely unless $$|A_{s-1}|\cos (a_{s-1})+|B_{s-1}|\cos (b_{s-1})=|A_{s-1}|\sin (a_{s-1})-|B_{s-1}|\sin (b_{s-1})=0.$$ In the latter case we must have
\begin{equation}\label{iterate}
{|A_{s-1}|\cos (a_{s-1})+|B_{s-1}|\cos (b_{s-1})}=0.
\end{equation}
Since $n\in \mathcal{F}_s$ we have  $p_1p_2\dots p_{s-1}=\tilde{n}\vert n$ and so by definition $\tilde{n}\in S\setminus\tilde{S}.$ This implies that~\eqref{iterate} and consequently~\eqref{iterate1} must be trivial with $A_{s-1}=B_{s-1}=0.$ This contradicts the definition of $n\in\mathcal{F}_s.$\\
Hence $\tan(\phi_s)$ is determined uniquely and so is the corresponding prime $p_s.$ Indeed, if $\pi^{(1)}_s=x^2+y^2$ and $p^{(2)}_s=a^2+b^2$ are two primes corresponding to the same angle $\phi_s,$ then
\[\tan (\phi_s)=\frac{a}{b}=\frac{x}{y}\cdot\]
Since $(a,b)=(x,y)=1,$ we have that $|a|=|x|$ and $|b|=|y|$ and therefore $a^2+b^2=x^2+y^2=p^{(1)}_s=p^{(2)}_s:=p_s.$\\ We are now ready to estimate the number of $m\in\Omega_{M,K}$ which give rise to a nontrivial solution of~\eqref{eq:M semi-corr def}.
By Lemma~\ref{bb1}, we can restrict ourselves to $m=p_1p_2\dots p_r\in\Omega_{M,K},$ with $K<p_1<p_2<\dots<p_r$ and $p_j\ge 2^{j\Phi(j)}$ for any $1\le j\le r$ and some slowly growing function $\Phi(x)$ to be determined later.
Clearly for each such $m,$ there exists unique $1\le s\le r$ such that the product $p_1p_2\dots p_s\in \mathcal{F}_s.$
Given $K<p_1<p_2<\dots <p_{s-1}$ we can form at most $2^{2k(s-1)}$ sums $A_{s-1}$ and $B_{s-1}$ and thus produce at most $2^{2k(s-1)}$ distinct $n=p_1p_2\dots p_{s-1}p_s\in\Omega_{M,K}.$ By Lemma~\ref{bb1}, $p_s\ge \max\{2^{s\Phi(s)},K\}$ and therefore the total number of elements in $\tilde{S}\cap [1,M]$ induced by the elements in $\mathcal{F}_s$ is at most
\[\ll 2^{2ks}\left|\left\{m\le \frac{M}{\max{\{K,2^{s\Phi(s})\}}},\ m\in \Omega_{M,K}\right\}\right|,\]
where the last bound comes from ``conditioning" on the value of $p_s$ and noting that $m/p_s\in\Omega_{M,K}.$
Summing this over different ranges for $s$ and choosing $\Phi(x)=o(\log x)$ in the same way as in \cite{BB} yields the desired conclusion.
\end{proof}
We are now ready to handle the general case.
\begin{proposition}\label{key2}
For all but $o\left(\frac{M}{\sqrt{\log M}}\right)$ elements $m\in S\cap [1,M],$ the equation~\eqref{eq:M semi-corr def} has $O(N_m^k)$ solutions.
\end{proposition}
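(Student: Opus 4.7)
The plan is to reduce the general case to Proposition \ref{squarefree} by isolating a small ``bad'' divisor of $n$ and using unique factorisation in $\Z[i]$ to translate a semi-correlation on $\Ec_n$ into a twisted semi-correlation on the ``good'' divisor. Fix two parameters $y = y(M), K = K(M) \to \infty$ slowly, and for each $n \in S$ decompose $n = u v$, where $u$ collects all prime powers $p^{a} \| n$ with $p \le K$ or $a \ge 2$, and $v := n/u$ is squarefree with every prime factor exceeding $K$. Since $n \in S$ forces every prime $\equiv 3 \pmod 4$ in $n$ to occur with an even exponent (hence inside $u$), the divisor $v$ is a product of distinct primes $\equiv 1 \pmod 4$, so that $v \in \Omega_{M/u, K}$ in the notation of the paper. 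A standard sieve estimate (using that the count of squarefull integers up to $X$ is $\asymp \sqrt{X}$, together with Rankin-type control on $K$-smooth numbers) gives
\[
\#\{ n \in S \cap [1,M] : u(n) > y \} = o\!\left( M/\sqrt{\log M} \right)
\]
for a suitable choice of $y, K$, so these $n$ may safely be discarded.

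For the surviving $n$, the coprimality $\gcd(u,v)=1$ combined with unique factorisation in $\Z[i]$ gives a $4$-to-$1$ surjection $\Ec_u \times \Ec_v \to \Ec_n$ sending $(\alpha, \beta) \mapsto \alpha \beta$, whence $N_n = N_u N_v / 4$ with $N_u = O_y(1)$ and therefore $N_n \asymp N_v$. The semi-correlation relation reads
\[
0 = \sum_{j=1}^l \mu^j_1 = \sum_{j=1}^l \text{Re}\bigl(\alpha^j \beta^j\bigr),
\]
and partitioning $\Mcc_l(n)$ according to the $l$-tuple $(\alpha^j) \in \Ec_u^l$---of which there are at most $N_u^l = O_y(1)$---reduces the task to counting, for each fixed $\alpha$-tuple, the $l$-tuples $(\beta^j) \in \Ec_v^l$ satisfying a single twisted linear equation with prescribed Gaussian coefficients $\alpha^j$.

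At this stage I would run a twisted version of the argument of Proposition \ref{squarefree}. Writing $v = p_1 \cdots p_r$ with $K < p_1 < \cdots < p_r$ and each $\beta^j = i^{k_j} \prod_{s=1}^r \pi_{s,j}^{\ast}$ with $\pi_{s,j}^{\ast} \in \{\pi_s, \bar\pi_s\}$, the twisted equation assumes the form
\[
\sum_{j=1}^l \text{Re}\bigl(c_j \prod_{s=1}^r \pi_{s,j}^{\ast}\bigr) = 0, \qquad c_j := i^{k_j} \alpha^j \in \Z[i].
\]
Iterating on the largest Gaussian prime $\pi_r$ and grouping the terms according to whether $\pi_{r,j}^{\ast} = \pi_r$ or $\bar\pi_r$, the same computation that produces \eqref{iterate1} shows that either $\tan \arg \pi_r$ is uniquely determined by the remaining data---so $p_r$ is forced, and conditioning on $p_r$ together with Lemma \ref{bb1} yields the required $O(N_v^k)$ count per $\alpha$-tuple---or the equation collapses to a nontrivial relation on the proper divisor $v/p_r$, placing $v$ inside an inductively controlled exceptional set. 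Summing the $O_y(1)$ contributions from the $\alpha$-tuples and using $N_n \asymp N_v$ then yields $|\Mcc_l(n)| = O(N_n^k)$ outside the union of the above exceptional sets, of total cardinality $o(M/\sqrt{\log M})$ provided $y, K \to \infty$ slowly enough.

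The main obstacle is the twisted iteration: the key non-degeneracy (the unique determination of $\tan \phi_r$) may fail for special configurations of the twist coefficients $c_j$, forcing us into the degenerate branch. Since there are only $O_y(1)$ choices of the $c_j$'s, a union bound over them absorbs the extra exceptional set, but one must verify carefully that the square-root-type savings supplied by Lemma \ref{bb1} survive each inductive descent through $v/p_r, v/(p_{r-1} p_r), \ldots$ without accumulating losses that would swallow the density one statement, and that the degenerate branch itself (where \eqref{iterate} holds trivially) can be iterated all the way down---this is where some extra bookkeeping relative to the untwisted case of Proposition \ref{squarefree} is required.
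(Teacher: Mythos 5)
Your reduction is essentially the paper's own proof: the paper likewise splits $n=n_Kn_b$ into its $K$-smooth part and a rough part (made squarefree by discarding the negligible set of $n$ with $p^2\mid n$ for some $p>K$), fixes the Gaussian divisors $\alpha_i\mid n_K$ as twist coefficients in $\sum_i\mathrm{Re}(\alpha_i\xi_i)=0$, and reruns Proposition~\ref{squarefree} on the rough part, with the degenerate branch you worry about handled by the same minimality device (the sets $\mathcal{F}_s$) as in the untwisted case. The only difference is bookkeeping: instead of truncating the smooth part at $u\le y$ and taking a union bound over $O_y(1)$ coefficient tuples, the paper sums $2^{2k\Omega_{4,1}(n_K)}|\Omega_{M/n_K,K}|$ over all smooth parts and controls it by the convergent Euler product $\prod_{p\le K,\,p\equiv 1\,(4)}\sum_{j\ge 0}4^{kj}p^{-j}\ll(\log K)^{2k+1}$, which avoids your separate Rankin-type estimate.
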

\begin{proof}
Fix large $K>0$ and consider $\mathcal{P}_K=\prod_{p\le K}p.$ We decompose
$n=n_{K}n_b$ where $(n_b,\mathcal{P}_K)=1$ and $\text{rad}(n_K)\vert \mathcal{P}_K.$ For any fixed part $n_K=\xi\bar{\xi}$ we count the number of $n\in \tilde{S}\cap [1,M]$ with fixed $n_K|n$ and $(\frac{n}{n_K},\mathcal{P}_K)=1.$  In this way~\eqref{eq:M semi-corr def} reduces to
\[\sum_{i=1}^{2k}\text{Re}(\alpha_i\xi_i)=0\]
with $\alpha_i\vert n_K$
and $\|\xi_i\|=\sqrt{n_b}$ for $1\le i\le 2k.$
We now follow the proof of Proposition~\ref{squarefree} regarding $\alpha_i$ as fixed coefficients. Let $\Omega_{4,1}(n)$ denote the number of prime divisors $p=1(\bmod 4)$ of $n$ counting multiplicity. Given $n_K,$ we have at most $2^{2k\Omega_{4,1}(n_K)}$ choices for the coefficients $\alpha_i.$ Note that the number of integers $n\in\Omega_M$ for which $p^2|n$ for some prime $p\ge K,$ is bounded above by
\[\sum_{p>K}|\Omega_{\frac{M}{p^2}}|\ll \sum_{p>K}\frac{M}{p^2\sqrt{\log M}}\ll \frac{M}{K\log K \sqrt{\log M}}\]
and thus give negligible contribution.
Consequently, we can restrict ourselves to the set of integers with $\text{rad}(n_b)=n_b.$ The number of $n\in \tilde{S}\cap [1,M]$ induced in this way, after appealing to Proposition~\ref{squarefree} is bounded above by
\begin{align*}
\sum_{\text{rad}(n_K)\vert \mathcal{P}_K}2^{2k\Omega_{4,1}(n_K)}|\Omega_{M/(n_K),K}|&\ll \sum_{\text{rad}(n_K)\in\mathcal{P}_K}\frac{4^{k\Omega_{4,1}(n_k)}}{n_K}\left((K^{-1+\delta}+o(1))\frac{M}{\sqrt{\log M}}\right)\\&\ll \prod_{p\le K,\ p=1\pmod 4}\left(\sum_{j\ge 0}\frac{4^{kj}}{p^{j}}\right)\left({K^{-1+\delta}}+o(1)\right)\frac{M}{\sqrt{\log M}}\\&
\ll \left(\frac{(\log K)^{2k+1}}{K^{1-\delta}}+o(1)\right)\frac{M}{\sqrt{\log M}}\cdot
\end{align*}
The result now follows by letting $K\to\infty.$
\end{proof}
Proposition~\ref{key2} now implies that $|\tilde{S}\cap [1,M]|=o\left(\frac{M}{\sqrt{\log M}}\right)$ and we may take $S'=S\setminus \tilde{S}.$
In order to prove the last part of Theorem~\ref{thm:semi-correlations Lyosha}, we require the following classical result of Kubilius.
\begin{lemma}\label{pnt}
The number of Gaussian primes $\omega$ in the sector $0\le \alpha\le\arg \omega\le \beta\le 2\pi,$ $|\omega|^2\le u$ is equal to
\[\frac{2}{\pi}(\beta-\alpha)\int_{2}^{u}\frac{1}{\log v}dv+O\left(u\exp(-b\sqrt{\log u})\right),\]
for some positive constant $b\in\mathbb{R}.$
\end{lemma}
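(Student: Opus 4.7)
The plan is to deduce this statement from the classical analytic theory of Hecke $L$-functions attached to the Gaussian field $\mathbb{Q}(i)$, following the Hadamard--de la Vall\'ee Poussin paradigm that underlies the prime number theorem. Concretely, I would introduce the unitary Gr\"ossencharacters $\chi_m(\omega)=(\omega/|\omega|)^{4m}$, $m\in\mathbb{Z}$, where the factor $4$ accounts for the unit group $\{\pm 1,\pm i\}\subseteq\mathbb{Z}[i]^{\times}$ so that each $\chi_m$ descends to a well-defined character on principal ideals (and hence on all ideals, since $\mathbb{Z}[i]$ is a PID). Equidistribution of $\arg(\omega)$ among Gaussian primes of bounded norm, with the claimed density $\tfrac{2}{\pi}$ per unit angle, will follow by Weyl's criterion from two ingredients: the prime ideal theorem for $m=0$, and a cancellation estimate $\sum_{\|\mathfrak{p}\|^{2}\le u}\chi_m(\mathfrak{p})\ll u\exp(-b\sqrt{\log u})$ for every fixed $m\ne 0$, where the sum runs over prime ideals.

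For $m=0$ the generating function is the Dedekind zeta function $\zeta_{\mathbb{Q}(i)}(s)=L(s,\chi_0)$, which has a simple pole at $s=1$ (residue computable via the analytic class number formula) and is nonvanishing on $\Re s=1$. Standard Perron-type truncation applied to $-\zeta_{\mathbb{Q}(i)}'/\zeta_{\mathbb{Q}(i)}(s)$, followed by a contour shift into the classical zero-free region $\sigma>1-c/\log(|t|+2)$, yields $\sum_{\|\omega\|^{2}\le u}\Lambda_{\mathbb{Q}(i)}(\omega)=u+O(u\exp(-b\sqrt{\log u}))$. Partial summation and the usual M\"obius sieve to remove the prime powers then produce the main term $\tfrac{4}{\pi}\int_2^u dv/\log v$ for the total count of Gaussian primes with $\|\omega\|^{2}\le u$ across the full angular range $[0,2\pi]$, which is consistent with the coefficient $\tfrac{2}{\pi}(\beta-\alpha)$ once angular restriction is imposed.

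For $m\ne 0$ the L-function $L(s,\chi_m)$ is entire, admits the Hecke functional equation with analytic conductor growing polynomially in $|m|$, and Landau's non-vanishing argument furnishes the same classical zero-free region (here $m$ is fixed, so uniformity in $m$ is not an issue). Exactly the same Perron-plus-contour-shift machinery produces $\sum_{\|\omega\|^{2}\le u}\Lambda_{\mathbb{Q}(i)}(\omega)\chi_m(\omega)\ll u\exp(-b\sqrt{\log u})$. Assembling the estimates across all $m$ via Weyl's criterion localizes the count into the sector $\alpha\le\arg\omega\le\beta$ with the advertised asymptotic.

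The main obstacle is the zero-free region for $L(s,\chi_m)$: the delicate point is the non-vanishing on the line $\Re s=1$, which one establishes by the familiar trigonometric inequality $3+4\cos\theta+\cos 2\theta\ge 0$ applied to the auxiliary product $\zeta_{\mathbb{Q}(i)}(s)^{3}L(s,\chi_m)^{4}L(s,\chi_{2m})$ together with nonnegativity of Dirichlet coefficients of its logarithmic derivative. Once this zero-free region is secured, the error term $u\exp(-b\sqrt{\log u})$ of de la Vall\'ee Poussin type is automatic, and the final assembly into the claimed asymptotic is routine.
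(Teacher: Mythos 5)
The paper does not prove this lemma at all: it is quoted verbatim as a classical theorem of Kubilius, so there is no internal proof to compare against. Your reconstruction via Hecke Gr\"ossencharacters $\chi_m(\omega)=(\omega/|\omega|)^{4m}$ of $\mathbb{Q}(i)$ is indeed the standard route (it is essentially Hecke's equidistribution argument, quantified by Kubilius), and the individual ingredients you list --- the prime ideal theorem for $m=0$, entirety and the functional equation of $L(s,\chi_m)$ for $m\ne 0$, the $3+4\cos\theta+\cos 2\theta\ge 0$ device applied to $\zeta_{\mathbb{Q}(i)}(s)^3L(s,\chi_m)^4L(s,\chi_{2m})$, and the observation that no $\chi_m$ with $m\ne0$ is real so no Siegel zero intervenes --- are all correct.

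There is, however, one genuine gap in the assembly step. You assert that ``$m$ is fixed, so uniformity in $m$ is not an issue.'' That is true only for the qualitative equidistribution statement. To obtain the error term $O\left(u\exp(-b\sqrt{\log u})\right)$ for a fixed sector, you must approximate the indicator of $[\alpha,\beta]$ by a trigonometric polynomial whose degree $M$ grows with $u$ (the truncation alone contributes an error of order $u/(M\log u)$, so you need $M\gg\exp(b\sqrt{\log u})$), and you then sum the character-sum bounds over all $|m|\le M$. This forces you to prove the zero-free region and the bound $\sum_{N\mathfrak{p}\le u}\chi_m(\mathfrak{p})\ll u\exp(-b\sqrt{\log u})$ \emph{uniformly} for $m$ in that range, with at most polynomial loss in $m$, and to combine them via a quantitative device such as the Erd\H{o}s--Tur\'an inequality or Selberg's majorant polynomials. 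This uniformity is available for these Hecke $L$-functions (the analytic conductor grows polynomially in $|m|$ and the classical zero-free region $\sigma>1-c/\log(|m|+|t|+2)$ holds), but it is precisely the content of Kubilius's theorem and cannot be waved away. A second, minor point: your ``total count'' $\frac{4}{\pi}\int_2^u dv/\log v$ is inconsistent with the claimed sector density; setting $\beta-\alpha=2\pi$ in $\frac{2}{\pi}(\beta-\alpha)$ gives $4\int_2^u dv/\log v$, which is the correct total (each of the $\sim u/\log u$ degree-one prime ideals has four unit multiples).
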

We now choose particular ``thin" subset of $S,$ to guarantee the desired limiting behaviour of $\{\widehat{\nu_{n}}(4)\}_{n\in S'}.$
\begin{proposition}\label{extremal}
For any $s\in[-1,1],$ there exists a sequence $\{n_i\}_{i\ge 1},$ with $N_{n_i}\to\infty$ whenever $i\to\infty,$ such that $\widehat{\nu_{n_i}}(4)\to s$ and equation~\eqref{eq:M semi-corr def} has $O(N_{n_i}^k)$ solutions for any $i\ge 1.$
\end{proposition}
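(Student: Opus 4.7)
The plan is to construct, for each $s\in[-1,1]$, an explicit ``two-prime'' sequence $n_i = p_i^{m_i}\cdot q_i$ with $p_i, q_i$ distinct primes $\equiv 1\pmod 4$ and $m_i\to\infty$, where both $p_i$ and $q_i$ depend on $m_i$. The three required properties (namely $N_{n_i}\to\infty$, $\widehat{\nu_{n_i}}(4)\to s$, and $|\Mcc_l(n_i)|=O(N_{n_i}^k)$) will be verified by combining multiplicativity of the underlying Hecke character with Lemma~\ref{pnt} to produce primes of prescribed angular behaviour.

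First I would observe that $\chi(\mu):=(\bar\mu/|\mu|)^4$ defines a unit-invariant multiplicative character on nonzero Gaussian integers, so unique factorization in $\mathbb{Z}[i]$ makes the partial sum $\Sigma(n):=\sum_{\mu\in\Ec_n/\{\pm 1,\pm i\}}\chi(\mu)$ multiplicative. Consequently, for coprime $a,b\in S$,
\[ \widehat{\nu_{ab}}(4) \;=\; \widehat{\nu_a}(4)\cdot\widehat{\nu_b}(4). \]
This splits the task into two independent subtasks: a ``concentrated'' factor whose Fourier coefficient tends to $1$, and a ``tuning'' factor whose Fourier coefficient tends to $s$.

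For the concentrated factor, given any $\epsilon_m\to 0$ with $m\epsilon_m\to 0$ (e.g.\ $\epsilon_m=1/m^{2}$), Lemma~\ref{pnt} supplies a Gaussian prime $\pi_m$ with $\arg\pi_m\in(0,\epsilon_m)$; set $p_m:=|\pi_m|^2$. The Gaussian integers of norm $p_m^m$ are, up to units, $\pi_m^a\bar\pi_m^{m-a}$ for $0\le a\le m$, of argument $(2a-m)\arg\pi_m=o(1)$ uniformly in $a$, giving $\widehat{\nu_{p_m^m}}(4)=\sin(4(m+1)\arg\pi_m)/((m+1)\sin(4\arg\pi_m))\to 1$ while $N_{p_m^m}=4(m+1)\to\infty$. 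For the tuning factor, a direct calculation shows $\widehat{\nu_q}(4)=\cos(4\arg\pi_q)$ for any prime $q\equiv 1\pmod 4$, so a second application of Lemma~\ref{pnt} (equidistribution of Gaussian primes in angular sectors) produces primes $q_i$ with $\cos(4\arg\pi_{q_i})\to s$. Setting $n_i:=p_{m_i}^{m_i}\cdot q_i$ with $p_{m_i}\ne q_i$ then yields $N_{n_i}=8(m_i+1)\to\infty$ and, by multiplicativity, $\widehat{\nu_{n_i}}(4)\to s$.

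The main obstacle is verifying $|\Mcc_l(n_i)|=O(N_{n_i}^k)$ for this very sparse explicit sequence, since Proposition~\ref{key2} only gives the bound on a density-one subset of $S$. Every $\mu\in\Ec_{n_i}$ admits the explicit form $u\,\pi^{a}\bar\pi^{m_i-a}\rho^{\epsilon}\bar\rho^{1-\epsilon}$ with $\pi:=\pi_{p_{m_i}}$, $\rho:=\pi_{q_i}$, $u\in\{\pm 1,\pm i\}$, $0\le a\le m_i$ and $\epsilon\in\{0,1\}$, so a semi-correlation translates into an equation
\[ \sum_{j=1}^{l}\operatorname{Re}\!\left(i^{\alpha_j}\pi^{a_j}\bar\pi^{m_i-a_j}\rho^{\epsilon_j}\bar\rho^{1-\epsilon_j}\right)=0. \]
Following the scheme of Proposition~\ref{squarefree}, the plan is to isolate the $\rho$-dependence by grouping terms into $\operatorname{Re}(\rho A)+\operatorname{Re}(\bar\rho B)=0$ with $A,B\in\mathbb{Z}[i]$ depending only on $\pi$ and the combinatorial data $\{(a_j,\epsilon_j,\alpha_j)\}$. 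For each combinatorial pattern the equation either determines $\arg\rho$ (hence $q_i$) uniquely up to finitely many options, or forces the degeneracy $A=B=0$ which reduces to a pure identity in $\pi$ treated by iteration. As there are only finitely many combinatorial patterns, a further appeal to Lemma~\ref{pnt} permits choosing $q_i$ outside the resulting (finite, $\pi$-dependent) list of forbidden primes, eliminating all non-diagonal contributions; the remaining diagonal (pair-cancellation) semi-correlations $\operatorname{Re}(\mu^{j}+\mu^{j'})=0$ are easily counted to contribute $O(N_{n_i}^k)$, matching the required bound.
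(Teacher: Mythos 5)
Your proposal is correct and follows essentially the same route as the paper: the construction $n = p^m q$ with $\arg\pi_p$ near $0$ (forcing $\widehat{\nu_{p^m}}(4)\to 1$) and $q$ tuned via Kubilius's lemma so that $\cos(4\arg\pi_q)\to s$, followed by reducing any nontrivial semi-correlation to one of finitely many bounded-degree trigonometric equations in the two prime angles and choosing the angles to avoid the finitely many resulting roots. The only (immaterial) difference is the order in which the two primes are eliminated — the paper fixes the tuning angle $\alpha$ generically first and then chooses $p$ to avoid the roots of the resulting nonzero trigonometric polynomials, whereas you fix $\pi$ first and exclude finitely many values of $\arg\rho$.
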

\begin{proof}
Fix large $m\ge 1$ and small $\varepsilon>0.$ By Lemma~\ref{pnt}, we can select prime $p_{n}=\pi_{n}\bar{\pi_n},$ $p_n=1\pmod 4$ and $|\text{arg}(\pi_n)|\le \frac{\varepsilon}{100m}.$ We further select prime $p$ such that
\begin{equation}\label{fourier1}
|\widehat{\nu_{p}}(4)-s|\le \frac{\varepsilon}{2}\cdot\end{equation} Consider the number of the form $n=p_n^{m}p.$ It is easy to see that
\begin{equation}\label{fourier2}
|\widehat{\nu_{p_n}}(4)-1|\le \frac{\epsilon}{2m}\end{equation} and $N_n> 2^m.$ Using an elementary inequality $$\left|\prod_{j\le r}x_j-\prod_{j\le r}y_j\right|\le \sum_{j\le r}|x_j-y_j|,$$
valid for $|x_j|,|y_j|\le 1$
and the fact that $\widehat{\nu_{n}}(4)=(\widehat{\nu_{p_n}}(4))^m\widehat{\nu_{p}}(4),$ we can estimate
$$|\widehat{\nu_{n}}(4)-s|\le m|\widehat{\nu}_{p_n}(4)-1|+|\widehat{\nu}_{p}(4)-s|\le m\cdot\frac{\varepsilon}{2m}+\frac{\varepsilon}{2}=\varepsilon.$$ We are left to show that equation~\eqref{eq:M semi-corr def} has only trivial solutions for appropriately chosen values of $p_n,p,$ which satisfy~\eqref{fourier1} and~\eqref{fourier2}. Let $\pi_n=r_ne^{i\phi}$ and $p=\pi\cdot\bar{\pi}$ with $\arg{\pi}=\alpha.$ Clearly each integer point on the circle of radius $\sqrt{n}$ can be written as $\xi_j=\sqrt{n}e^{i(j\phi\pm\alpha+r\frac{\pi}{2})}$ for some $|j|\le m$ and $r=\{0,1,2,3\}.$ For such defined $n,$ upon taking real parts, equaton~\eqref{eq:M semi-corr def} can be rewritten in the form
\begin{align}\label{eq1}
\sum_{j=1}^{2k}\varepsilon_j\cos \left(\l_j\phi\pm\alpha+\frac{\pi r_j}{2}\right)=0,
\end{align}
where $\varepsilon_j=\{+1,-1\}$ and $|\l_j|\le m$ for $1\le j\le 2k.$ By collecting terms with equal phases $\l_j\phi$ and using elementary trigonometric identities, we can rewrite~\eqref{eq1} in the form
\begin{equation}\label{trigonometric}
F_{r}(\phi)=\sum_{j=1}^{r}\cos (m_j\phi)(\alpha_j\cos \alpha+\beta_j\sin \alpha)+\sin (m_j\phi)(\alpha^{(1)}_j\cos \alpha+\beta_j^{(1)}\sin \alpha)=0,\end{equation}
where $1\le r\le 2k$ and $0\le m_1<m_2<\dots m_r$ with $-2k\le \alpha_j,\alpha_j^{(1)},\beta_j,\beta_j^{(1)}\le 2k.$
Since $k$ is fixed, there are only finitely many choices for the coefficients $\alpha_j,\alpha_j^{(1)},\beta_j,\beta_j^{(1)}$
and therefore we can select angle $\alpha$ for which the corresponding prime $p$ satisfies~\eqref{fourier1} and such that $a\sin{\alpha}+b\cos(\alpha)\ne 0$ for all $a,b\in\mathbb{Z}$ with $|a|+|b|\ne 0$ and $|a|,|b|\le 2k.$ Now since each $F_r(\phi)$ is a trigonometric polynomial of a total degree at most $4k,$ each non degenerate equation~\eqref{trigonometric} has at most $4k$ solutions. Since there are only finitely many of such equations, we can select $p_n$ sufficiently large which satisfies~\eqref{fourier2} and the corresponding equation~\eqref{eq1} has only trivial solutions. This concludes the proof.
 \end{proof}
 Combining Proposition~\ref{key2} and Proposition~\ref{extremal} completes the proof of Theorem~\ref{thm:semi-correlations Lyosha}.
\begin{remark}
It is possible to give a construction of $n$ in Proposition~\ref{extremal} which is square-free. The idea is to use Lemma \ref{pnt} and choose inductively sequence of primes $p_1<p_2<\dots <p_m$ such that $p_j=\pi_j\bar{\pi_j}$ and $|\text{arg}(\pi_j)|\le \frac{1}{m^2}$ with the property that for any $1\le r\le m$ we have $\cos(r\cdot\text{arg}(\pi_m))\notin \text{span}_{\mathbb{N}}\{\cos(\sum_{i\le m-1}a_i\text{arg}(\pi_i))\}$ where $-m\le a_i\le m,$ $a_i\in\mathbb{N}.$ The latter can be ensured by taking sufficiently sparse sequence of primes $p_1,p_2.\dots$ Now select $n=p\cdot p_1\dots p_m$ with $|\hat{\mu_p}(4)-s|\le \delta$ and $\delta$ sufficiently small. We leave the details to the interested reader.
\end{remark}

\appendix

\section{Proof of Lemma \ref{lem:Omega cond covar def}: evaluating $\Omega_{n}(\cdot)$, the normalised conditional covariance matrix}
\label{sec:covariance matrix}

The ultimate goal of this section is evaluating the $2\times 2$ (normalised) covariance
matrix $\Omega_{n}(\cdot)$ in \eqref{eq:Omega cond covar def}
of $\nabla f_{n}(x)$, conditioned upon $f_{n}(x)=0$. First, we will need to evaluate (\S\ref{apx:eval uncond mat}) the (unconditional) $3\times 3$
covariance matrix $\Sigma_{n}(x)$ of $(f_{n}(x),\nabla f_{n}(x)$, and then apply (\S\ref{apx:eval cond mat}) the well-known procedure
for conditioning in the Gaussian case.

\subsection{Evaluating the unconditional covariance matrix}
\label{apx:eval uncond mat}

\begin{lemma}
\label{lem:Sigma uncond eval}
Let $\Sigma_{n}(x)$ be the $3\times 3$ covariance matrix of $(f_{n}(x),\nabla f_{n}(x))$. Then $\Sigma_{n}(x)$
could be expressed as
\begin{align*}
\Sigma_n(x)=\left(
\begin{array}{cc}
v_{n}(x) & B_{n}(x) \\
B_{n}^t(x) & C_{n}(x)
\end{array}%
\right),
\end{align*}
where $v_{n}(\cdot)=\var (f_{n}(x))$ is as in \eqref{eq:vn var ident},
\begin{equation*}
B_n(x)=\mathbb{E}[f_n(x) \cdot \nabla_y f_{n}(y)] \Big|_{x=y}
\end{equation*}
and
\begin{equation*}
C_n(x)=\mathbb{E}[ \nabla_x f_n(x) \otimes  \nabla_y f_n(y)] \Big|_{x=y}.
\end{equation*}
The $2\times 1$ matrix $B^{t}_{n}(x)$ is:
\begin{equation}
\label{eq:Bt expr expl}
B^{t}_{n}(x) = \frac{ 4 \pi }{N_{n}}\left( \begin{matrix} \sum_{\mu\in\Ec_{n}/\sim}   \mu_1
 \sin ( 2 \mu_1 \pi x_1)  \big[ 1-\cos(2 \mu_2 \pi x_2) \big] \\  \sum_{\mu\in\Ec_{n}/\sim }   \mu_2
 \sin ( 2 \mu_2 \pi x_2)  \big[ 1-\cos(2 \mu_1 \pi x_1) \big]  \end{matrix}\right),
\end{equation}
and the entries of the $2\times 2$ matrix $C_{n}(x)$ is given by:
\begin{equation*}
C_{n;11}(x)= \frac{ n \pi^2}{2}  +  \frac{4 \pi^2}{N_{n}} \sum_{\mu\in\Ec_{n}/\sim} \mu_1^2 \Big[ \cos  ( 2 \mu_1 \pi x_1) - \cos(2 \mu_2 \pi x_2) - \cos(2  \mu_1 \pi x_1)  \cos(2  \mu_2 \pi x_2) \Big]
\end{equation*}
\begin{equation*}
C_{n;12}(x)=C_{n;21}(x) = \frac{ 4 \pi^2}{N_{n}} \sum_{\mu\in\Ec_{n}/\sim} \mu_1  \mu_2
 \sin (2  \mu_1 \pi x_1)  \sin (2  \mu_2 \pi x_2),
\end{equation*}
and
\begin{equation*}
C_{n;22}(x)= \frac{ n \pi^2}{2}  + \frac{ 4\pi^2}{N_{n}} \sum_{\mu\in\Ec_{n}/\sim} \mu_2^2 \Big[
\cos( 2 \mu_2 \pi x_2)-  \cos( 2 \mu_1 \pi x_1) -  \cos( 2 \mu_2 \pi x_2) \cos( 2 \mu_1 \pi x_1) \Big].
\end{equation*}

\end{lemma}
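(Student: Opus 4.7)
The plan is a direct term-by-term Gaussian covariance computation, since every entry of $\Sigma_n(x)$ is a correlation between linear functionals of the independent coefficients $\{a_\mu\}_{\mu\in\Ec_n/\sim}$. I would first differentiate the explicit expression \eqref{eq:BAARW fn def} under the sum to obtain
\begin{equation*}
\partial_{x_1} f_n(x) = \frac{4\pi}{\sqrt{N_n}} \sum_{\mu\in\Ec_n/\sim} a_\mu\, \mu_1 \cos(\pi\mu_1 x_1)\sin(\pi\mu_2 x_2),
\end{equation*}
and analogously for $\partial_{x_2} f_n(x)$, with the roles of the two coordinates swapped. Because $\E[a_\mu a_{\mu'}]=\delta_{\mu,\mu'}$, every expectation $\E[P(x)Q(x)]$ for $P,Q \in \{f_n,\partial_{x_1}f_n,\partial_{x_2}f_n\}$ collapses to a single sum over $\mu\in\Ec_n/\sim$ of the product of the corresponding trigonometric factors.

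Next, for the cross-block $B_n$ I would compute
\begin{equation*}
\E[f_n(x)\,\partial_{x_1}f_n(x)] = \frac{16\pi}{N_n}\sum_{\mu\in\Ec_n/\sim}\mu_1 \sin(\pi\mu_1 x_1)\cos(\pi\mu_1 x_1)\sin^2(\pi\mu_2 x_2),
\end{equation*}
and apply the double-angle identities $2\sin\theta\cos\theta=\sin(2\theta)$ and $2\sin^2\theta=1-\cos(2\theta)$ to recover the first entry of $B_n^t(x)$ in \eqref{eq:Bt expr expl}; the second entry follows by swapping the two coordinates. The off-diagonal entry $C_{n;12}(x)$ is handled similarly, starting from $\E[\partial_{x_1}f_n(x)\,\partial_{x_2}f_n(x)]$ and applying the double-angle identity in both variables.

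The only step with a subtlety is the appearance of the explicit constant $\tfrac{n\pi^2}{2}$ in $C_{n;11}(x)$ and $C_{n;22}(x)$. Starting from
\begin{equation*}
\E\bigl[(\partial_{x_1}f_n(x))^2\bigr] = \frac{16\pi^2}{N_n}\sum_{\mu\in\Ec_n/\sim} \mu_1^2 \cos^2(\pi\mu_1 x_1)\sin^2(\pi\mu_2 x_2),
\end{equation*}
I would expand each square via the half-angle formula. This produces four terms, one of which is a pure constant in $x$, namely $\tfrac{4\pi^2}{N_n}\sum_{\mu\in\Ec_n/\sim}\mu_1^2$. By the rotational symmetry of $\Ec_n$ (invariance under $\mu\mapsto(\mu_2,-\mu_1)$) one has $\sum_{\mu\in\Ec_n}\mu_1^2 = \sum_{\mu\in\Ec_n}\mu_2^2 = \tfrac{1}{2}\sum_{\mu\in\Ec_n}|\mu|^2 = \tfrac{n N_n}{2}$, and since each $\Ec_n/\sim$ class has four representatives this becomes $\tfrac{n N_n}{8}$, producing precisely $\tfrac{n\pi^2}{2}$; the three remaining terms assemble into the bracketed expression. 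The entry $C_{n;22}$ is identical after swapping coordinates. The diagonal entry $v_n(x)$ is then the simplest computation: $\E[f_n(x)^2]=\tfrac{16}{N_n}\sum_\mu \sin^2(\pi\mu_1 x_1)\sin^2(\pi\mu_2 x_2)$, and expanding both $\sin^2$ factors via the half-angle formula, using $|\Ec_n/\sim|=N_n/4$ to extract the constant $1$, recovers \eqref{eq:vn var ident}--\eqref{eq:sn var pert ident}.

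There is no genuine obstacle here; the work is purely a bookkeeping exercise. The only points requiring care are (i) tracking the combinatorial factor of $4$ when passing from $\Ec_n$ to $\Ec_n/\sim$, (ii) using the radial-symmetry identity $\sum_{\mu\in\Ec_n}\mu_1^2 = nN_n/2$ to identify the constant part of the diagonal entries, and (iii) systematically invoking the double- and half-angle identities so that every product of trigonometric factors is written in the $\cos(2\pi\mu_j x_j)$-form that appears in the statement.
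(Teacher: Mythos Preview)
Your proposal is correct and follows essentially the same route as the paper: the paper differentiates the covariance kernel $r_n(x,y)$ and then restricts to the diagonal, whereas you differentiate $f_n$ first and compute $\E[\,\cdot\,]$ using the independence of the $a_\mu$, but these produce identical intermediate sums and invoke the same double/half-angle identities and the same lattice identity $\sum_{\mu\in\Ec_n/\sim}\mu_1^2=nN_n/8$.
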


\begin{proof}

First, that the $(1,1)$ entry of $\Sigma_{n}(x)$ is $v_{n}(x)$ is self-evident, being the variance of $f_{n}(x)$.
For the other elements of $\Sigma_{n}(x)$, recalling the covariance function $r_{n}(x,y)$ in \eqref{eq:rn covar def}, we have that
\begin{equation}
\label{eq:Bn rn der}
B_n(x)=\nabla_y \, r_n(x,y)  \Big|_{x=y},\\
\end{equation}
and
\begin{equation}
\label{eq:Cn rn der}
C_n(x)=\left(\nabla_x \otimes \nabla_y\right)  r_n(x,y)\Big|_{x=y}.
\end{equation}
What follows below is the, somewhat technically demanding, routine, evaluation of the derivatives on the r.h.s. \eqref{eq:Bn rn der} and
\eqref{eq:Cn rn der}.

We first compute the entries of the $1\times 2$ matrix $B_{n}(x)$. Note that, without stationarity (nor unit variance),
successive derivatives are not uncorrelated at a fixed point. We have
\begin{align*}
\frac{\partial}{\partial y_1} r_n(x,y) \Big|_{x=y} &=  \frac{16}{N_{n}} \sum_{\mu\in\Ec_{n}/\sim}
 \sin (  \mu_1 \pi x_1)  \sin (  \mu_2 \pi x_2)  \frac{\partial}{\partial y_1}  \sin (  \mu_1 \pi y_1) \sin (  \mu_2 \pi y_2) \Big|_{x=y} \\
 &= \frac{16}{N_{n}} \sum_{\mu\in\Ec_{n}/\sim}
 \sin (  \mu_1 \pi x_1)  \sin (  \mu_2 \pi x_2)   \cos (  \mu_1 \pi y_1) \sin (  \mu_2 \pi y_2)   \mu_1 \pi \Big|_{x=y} \\
 &= \frac{16  \pi }{N_{n}} \sum_{\mu\in\Ec_{n}/\sim}
 \sin (  \mu_1 \pi x_1)  \sin^2 (  \mu_2 \pi x_2)   \cos (  \mu_1 \pi x_1)    \mu_1,
\end{align*}
since $\sin(2 \theta)=2 \sin \theta \cos \theta$, we can also write
\begin{align*}
\frac{\partial}{\partial y_1} r_n(x,y) \Big|_{x=y} &= \frac{8  \pi }{N_{n}} \sum_{\mu\in\Ec_{n}/\sim}
 \sin ( 2 \mu_1 \pi x_1)  \sin^2 (  \mu_2 \pi x_2)     \mu_1.
\end{align*}
Similarly
\begin{align*}
\frac{\partial}{\partial y_2} r_n(x,y) \Big|_{x=y}
 &= \frac{8  \pi }{N_{n}} \sum_{\mu\in\Ec_{n}/\sim}  \sin ( 2 \mu_2 \pi x_2)
 \sin^2 (  \mu_1 \pi x_1) \mu_2.
\end{align*}
Further, since $2 \sin^2 \theta=1-\cos(2 \theta)$, we have
\begin{equation*}
\frac{\partial}{\partial y_1} r_n(x,y) \Big|_{x=y} = \frac{ 4 \pi }{N_{n}} \sum_{\mu\in\Ec_{n}/\sim}   \mu_1
 \sin ( 2 \mu_1 \pi x_1)  \big[ 1-\cos(2 \mu_2 \pi x_2) \big]
\end{equation*}
and
\begin{equation*}
\frac{\partial}{\partial y_2} r_n(x,y) \Big|_{x=y} = \frac{ 4 \pi }{N_{n}} \sum_{\mu\in\Ec_{n}/\sim }   \mu_2
\sin ( 2 \mu_2 \pi x_2)  \big[ 1-\cos(2 \mu_1 \pi x_1) \big],
\end{equation*}
which is \eqref{eq:Bt expr expl}.
Note that the both entries of $B_{n}(x)$ are purely oscillatory (though do not vanish identically, as in the stationary case).

\vspace{2mm}

The entries of the matrix $C_{n}(x)$ are
\begin{align*}
&\frac{\partial}{\partial y_1} \frac{\partial}{\partial x_1} r_n(x,y) \Big|_{x=y} =
\frac{16 \pi^2}{N_{n}} \sum_{\mu\in\Ec_{n}/\sim} \mu_1^2
 \cos (  \mu_1 \pi x_1)  \sin (  \mu_2 \pi x_2)  \cos (  \mu_1 \pi y_1) \sin (  \mu_2 \pi y_2) \Big|_{x=y}\\
 &= \frac{16 \pi^2}{N_{n}} \sum_{\mu\in\Ec_{n}/\sim} \mu_1^2
 \cos^2 (  \mu_1 \pi x_1)  \sin^2 (  \mu_2 \pi x_2)\\
  &= \frac{4 \pi^2}{N_{n}} \sum_{\mu\in\Ec_{n}/\sim} \mu_1^2 +  \frac{4 \pi^2}{N_{n}} \sum_{\mu \in \Ec_n  / \sim } \mu_1^2 \Big[ \cos  ( 2 \mu_1 \pi x_1) - \cos(2 \mu_2 \pi x_2) - \cos(2  \mu_1 \pi x_1)  \cos(2  \mu_2 \pi x_2) \Big]\\
  &= \frac{ n \pi^2}{2}  +  \frac{4 \pi^2}{N_{n}} \sum_{\mu\in\Ec_{n}/\sim} \mu_1^2 \Big[ \cos  ( 2 \mu_1 \pi x_1) - \cos(2 \mu_2 \pi x_2) - \cos(2  \mu_1 \pi x_1)  \cos(2  \mu_2 \pi x_2) \Big],
 \end{align*}
where we reused the identity $2 \sin^2 \theta=1-\cos(2 \theta)$ and $2 \cos^2 \theta=1+\cos(2 \theta)$, and applied the equality
\begin{align*}
\sum_{\mu\in\Ec_{n}/\sim} \mu_1^2 = \sum_{\mu \in  \Ec_n / \sim } \mu_1^2 \pm \sum_{\mu\in\Ec_{n}/\sim} \mu_2^2 = n \frac{N_{n}}{4} - \sum_{\mu\in\Ec_{n}/\sim} \mu_2^2= n \frac{N_{n}}{4} - \sum_{\mu\in\Ec_{n}/\sim} \mu_1^2.
\end{align*}
Similarly we have
\begin{align*}
&\frac{\partial}{\partial y_2} \frac{\partial}{\partial x_2} r_n(x,y) \Big|_{x=y}
=\frac{16 \pi^2}{N_{n}} \sum_{\mu\in\Ec_{n}/\sim} \mu_2^2
\sin^2 (  \mu_1 \pi x_1)  \cos^2 (  \mu_2 \pi x_2)\\
&= \frac{ n \pi^2}{2}  + \frac{ 4\pi^2}{N_{n}} \sum_{\mu\in\Ec_{n}/\sim} \mu_2^2 \Big[
\cos( 2 \mu_2 \pi x_2)-  \cos( 2 \mu_1 \pi x_1) -  \cos( 2 \mu_2 \pi x_2) \cos( 2 \mu_1 \pi x_1) \Big].
\end{align*}
The off-diagonal entries of $C_{n}(x)$ are both equal to
 \begin{align*}
&\frac{\partial}{\partial y_1} \frac{\partial}{\partial x_2} r_n(x,y) \Big|_{x=y}   =
\frac{16 \pi^2}{N_{n}} \sum_{\mu\in\Ec_{n}/\sim} \mu_1  \mu_2
 \sin (  \mu_1 \pi x_1)  \cos (  \mu_2 \pi x_2)  \cos (  \mu_1 \pi y_1) \sin (  \mu_2 \pi y_2) \Big|_{x=y}\\
 &=  \frac{16 \pi^2}{N_{n}} \sum_{\mu\in\Ec_{n}/\sim} \mu_1  \mu_2
 \sin (  \mu_1 \pi x_1)  \cos (  \mu_2 \pi x_2)  \cos (  \mu_1 \pi x_1) \sin (  \mu_2 \pi x_2) \\
 &=  \frac{ 4 \pi^2}{N_{n}} \sum_{\mu\in\Ec_{n}/\sim} \mu_1  \mu_2
 \sin (2  \mu_1 \pi x_1)  \sin (2  \mu_2 \pi x_2).
 \end{align*}

\end{proof}

\subsection{Proof of Lemma \ref{lem:Omega cond covar def}}
\label{apx:eval cond mat}

\begin{proof}

Let $\Theta_{n}(x)$ be the conditional covariance matrix of the Gaussian vector $(\nabla f_n(x) | f_n(x)=0)$,
related to $\Omega_{n}(x)$ via the normalisation
\begin{equation}
\label{eq:Omega Theta rel}
\Omega_n(x)= \frac{2}{n \pi^2} \Theta_n(x).
\end{equation}
Once the (unconditional) covariance matrix $\Sigma_{n}$ of $(f_{n}(x),\nabla f_{n}(x))$ is, following Lemma \ref{lem:Sigma uncond eval},
known, the conditional covariance matrix $\Theta_{n}(x)$ is given
by the standard Gaussian transition formula:
\begin{align*}
\Theta_n(x)=C_n(x) - \frac{1}{v_n(x)}B^t_n(x) B_n(x).
\end{align*}

We use \eqref{eq:Bt expr expl} to compute
\begin{align*}
\frac{1}{v_n(x)}B^t_n(x) B_n(x) = \frac{64 \pi^2}{N_{n}^2 v_n(x)}  \left(\begin{matrix}d_{1;n}(x)^{2} &d_{1;n}(x)\cdot d_{2;n}(x) \\
d_{1;n}(x)\cdot d_{2;n}(x) &d_{2;n}(x)^{2}\end{matrix} \right),
\end{align*}
and then, on recalling the notation \eqref{eq:b11 def}-\eqref{eq:d2 def}, we have
\begin{align*}
\Theta_n(x) &= \frac{n \pi^2}{2} I_2 + \frac{4 \pi^2}{N_{n}} \left(\begin{matrix}b_{11;n}(x) &b_{12;n}(x) \\ b_{21;n}(x) &b_{22;n}(x)\end{matrix} \right)
 -\frac{64 \pi^2}{N_{n}^2 v_n(x)}
\left(\begin{matrix}d_{1;n}(x)^{2} &d_{1;n}(x)\cdot d_{2;n}(x) \\
d_{1;n}(x)\cdot d_{2;n}(x) &d_{2;n}(x)^{2}\end{matrix} \right).
\end{align*}
All that remains to obtain an explicit expression for $\Omega_{n}(x)$ from $\Theta_{n}(x)$
in the form prescribed in \eqref{eq:Omega cond covar def} (and \eqref{eq:Gamma comp}-\eqref{eq:d2 def})
is merely to invoke the normalisation \eqref{eq:Omega Theta rel}:
\begin{align*}
\Omega_n(x)= \frac{2}{n \pi^2} \Theta_n(x) &= I_2 + \Gamma_n(x).
\end{align*}

\end{proof}

\section{Proof of Lemma \ref{lem:auxiliary 1}: Auxiliary results for Proposition \ref{prop:K1 asymp nonsing}}
\label{sec:auxiliary results}

\begin{proof}
Recall the notation \eqref{eq:Ic0 def}-\eqref{eq:Ic3 def} for whatever quantities are been evaluated in turn.
A transformation of variables shows at once that
\begin{align*}
\Ic_0(\Gamma_n(x))&=  \iint\limits_{\mathbb{R}^2} |z| \exp \left\{ -\frac 1 2 z z^t \right\}  d z= 2 \pi \int\limits_{0}^{\infty} \exp\left\{-\frac{1}{2}  \rho^2 \right\} \rho^{2}  \, d \rho  = 2 \pi \sqrt{\frac{\pi}{2}} = \pi^{3/2} \sqrt{2}.
\end{align*}
Let $\Gamma_{n}(x)=(\gamma_{ij;n}(x))_{ij=1,2}$, with $\gamma_{ij;n}(x)$ the entries of $\Gamma_n(x)$ (cf. \eqref{eq:Gamma comp}).
We have
\begin{align*}
\Ic_1(\Gamma_n(x))&= \frac 1 2  \iint\limits_{\mathbb{R}^2} |z| \exp \left\{ -\frac 1 2 z z^t \right\}  z \Gamma_n(x)  z^t d z\\
&= \frac 1 2  \iint\limits_{\mathbb{R}^2} |z| \exp \left\{ -\frac 1 2 z z^t \right\}  \left( z_1^2 \gamma_{11;n}(x) + z_2^2 \gamma_{22;n}(x) +2 z_1 z_2 \gamma_{12;n}(x)  \right)  d z \\
&= a_1 \tr (\Gamma_n(x))  +a_2 \gamma_{12;n}(x), 
\end{align*}
where
\begin{align*}
a_1&=\frac 1 2 \iint\limits_{\mathbb{R}^2} |z| \exp \left\{ -\frac 1 2 z z^t \right\}   z_1^2  d z= \frac 1 2 2 \pi \int\limits_0^{\infty} \rho \exp \left\{ -\frac{\rho^2}{2} \right\} \frac{\rho^2}{2} \rho d \rho =3\frac{ \pi^{3/2}}{2^{3/2}} ,\\
a_2&= \iint\limits_{\mathbb{R}^2} |z| \exp \left\{ -\frac 1 2 z z^t \right\}   z_1 z_2  d z=0.
\end{align*}
Further,
\begin{align*}
\Ic_2(\Gamma_n(x))&= -  \frac 1 2  \iint\limits_{\mathbb{R}^2} |z| \exp \left\{ -\frac 1 2 z z^t \right\}      z \Gamma^2_n(x)  z^t  d z \\
&= -  \frac 1 2  \iint\limits_{\mathbb{R}^2} |z| \exp \left\{ -\frac 1 2 z z^t \right\}     \left( \gamma^2_{11;n}(x) z_1^2+\gamma^2_{12;n}(x) z_1^2 +2 \gamma_{11;n}(x) \gamma_{12;n}(x) z_1 z_2 \right.\\
&\;\; \left. +2 \gamma_{12;n}(x) \gamma_{22;n}(x) z_1 z_2 +\gamma^2_{12;n}(x) z_2^2+\gamma^2_{22;n}(x) z_2^2 \right)  d z   \\
&= -  \frac 1 2  \iint\limits_{\mathbb{R}^2} |z| \exp \left\{ -\frac 1 2 z z^t \right\}     \left\{ z_1^2 [\gamma^2_{11;n}(x) +\gamma^2_{12;n}(x)]  +z_2^2[ \gamma^2_{12;n}(x) +\gamma^2_{22;n}(x)] \right\}  d z   \\
&= - a_1 [\gamma^2_{11;n}(x)+\gamma^2_{12;n}(x)] - a_1 [\gamma^2_{22;n}(x)+\gamma^2_{12;n}(x)]= - a_1  \tr (\Gamma^2_n(x)).
\end{align*}

Finally,
\begin{equation}
\label{eq:Ic sum int}
\begin{split}
\Ic_3(\Gamma_n(x))&= \frac{1}{8}
\iint\limits_{\mathbb{R}^2} |z| \exp \left\{ -\frac{z z^t}{2} \right\}   \left(  z  \Gamma_n(x)   z^t  \right)^2 d z\\
&=\frac{1}{8}  \iint\limits_{\mathbb{R}^2} |z| \exp \left\{ -\frac{z z^t}{2} \right\}   \left[  \gamma^2_{11;n}(x) z_1^4+\gamma^2_{22;n}(x) z_2^4+4 \gamma^2_{12;n}(x) z_1^2 z_2^2 \right.\\
&\;\; \left.+2 \gamma_{11;n}(x) \gamma_{22;n}(x) z_1^2 z_2^2 +4 \gamma_{11;n}(x) \gamma_{12;n}(x) z^3_1 z_2+4 \gamma_{12;n}(x) \gamma_{22;n}(x) z_1  z^3_2 \right] d z\\
&=\frac{1}{8}  \iint\limits_{\mathbb{R}^2} |z| \exp \left\{ -\frac{z z^t}{2} \right\}   \left[  \gamma^2_{11;n}(x) z_1^4+\gamma^2_{22;n}(x) z_2^4+4 \gamma^2_{12;n}(x) z_1^2 z_2^2 \right. \\
&\;\; \left. +2 \gamma_{11;n}(x) \gamma_{22;n} (x)z_1^2 z_2^2  \right] d z.
\end{split}
\end{equation}
We write
\begin{equation}
\label{eq:int|z| Gauss}
\begin{split}
&\iint\limits_{\mathbb{R}^2} |z| \exp \left\{ -\frac 1 2 z z^t \right\} (z_1^2 + z_2^2)^2 d z = 2 \pi \int\limits_0^{\infty} \rho \exp \left\{ - \frac{\rho^2}{2} \right\} \rho^4 \rho d \rho = 2 \frac{15}{\sqrt 2} \pi^{3/2},\\
&\iint\limits_{\mathbb{R}^2} |z| \exp \left\{ -\frac 1 2 z z^t \right\}   z_1^4 d z =  \frac{15}{ \sqrt 2}  \frac{3}{4} \pi^{3/2},
\end{split}
\end{equation}
and, using these, we also have
\begin{equation}
\label{eq:int|z|z1^2z2^2 Gauss}
\begin{split}
\iint\limits_{\mathbb{R}^2} |z| \exp \left\{ -\frac 1 2 z z^t \right\}   z_1^2 z_2^2 d z
=  \frac{15}{\sqrt 2} \frac 1 4  \pi^{3/2}.
\end{split}
\end{equation}
Substituting \eqref{eq:int|z| Gauss}-\eqref{eq:int|z|z1^2z2^2 Gauss} into \eqref{eq:Ic sum int} we obtain:
\begin{align*}
\Ic_3(\Gamma_n(x))&=\frac{1}{8}  \frac{15}{4 \sqrt 2}  \pi^{3/2}  \left[  3 \gamma^2_{11;n}(x)  +  3 \gamma^2_{22;n}(x)  +4 \gamma^2_{12;n}(x)   +2 \gamma_{11;n}(x) \gamma_{22;n}(x)   \right]\\
&=\frac{1}{8}  \frac{15 \sqrt 2}{8}  \pi^{3/2}   \left[  (2 \gamma^2_{11;n}(x)   +  2 \gamma^2_{22;n}(x)   +4 \gamma^2_{12;n}(x))   +(\gamma^2_{11;n}(x)   +   \gamma^2_{22;n}(x)  \right.\\
&\;\; \left. +2 \gamma_{11;n}(x)  \gamma_{22;n}(x) )   \right]=\frac{15 \sqrt 2}{64}  \pi^{3/2}  \left[ 2 \tr(\Gamma_n^2(x)) + [\tr(\Gamma_n(x)) ]^2  \right],
\end{align*}
precisely as claimed.
\end{proof}

\section{Proof of Lemma \ref{21:33 Lon}: auxiliary results for the proof of Lemma \ref{lem:L1 int, Ups bnd}}
\label{sec: proof of lemma lem:L1 int, Ups bnd auxiliary results}

\begin{proof}

We first observe that
\begin{align*}
\int\limits_{\Qc} s_n(x) d x=0, \hspace{0.7cm} \int\limits_{\Qc} s^2_n(x) d x=\frac{4^2}{N^2_n} \sum_{\mu \in \Ec_n / \sim} \frac{5}{4}=\frac{5}{4} \frac{4^2}{N^2_n} \frac{N_n}{4}= \frac{5}{N_n}.
\end{align*}
Now we turn to
\begin{align*}
\tr(\Gamma_n(x))&= \frac{8}{n N_n}  \left[ b_{11;n}(x) +  b_{22;n}(x) \right] - \frac{128}{n N^2_n v_n(x)} \left[   d^2_{1;n}(x) +  d^2_{2;n}(x)\right].
\end{align*}
We observe that
\begin{align*}
\int\limits_{\Qc} b_{11;n}(x) d x =0, \hspace{0.7cm}  \int\limits_{\Qc} b_{22;n}(x) d x =0.
\end{align*}
Moreover, recalling that \cite[Lemma 2.3]{RW08}
$$\sum_{\mu \in \Ec_n / \sim} \mu_i \mu_j = \frac{n}{2} \frac{N_n}{4} \delta_{i,j},$$
we have
\begin{align*}
\int\limits_{\Qc} d^2_{2;n}(x) d x=\int\limits_{\Qc} d^2_{1;n}(x) d x = \frac{3}{16} \sum_{\mu \in \Ec_n / \sim} \mu_1^2= \frac{3}{16}  \frac{n}{2} \frac{N_n}{4}=\frac{3}{2^7} n N_n,
\end{align*}
and also
\begin{align*}
\int\limits_{\Qc} d^2_{1;n}(x)  s_n(x) d x = \int\limits_{\Qc} d^2_{2;n}(x)  s_n(x) d x=O(n).
\end{align*}

Consolidating all the above estimates, we write
\begin{align*}
&\int\limits_Q\tr(\Gamma_n(x)) d x =  - \frac{2^7}{n N^2_n }   \int\limits_{\Qc}\frac{1}{v_n(x)}  \left[    d^2_{1;n}(x) +  d^2_{2;n}(x)  \right] d x\\
&=  - \frac{2^7}{n N^2_n}   \int\limits_{\Qc}  \left[    d^2_{1;n}(x) +  d^2_{2;n}(x)  \right]  \left[1+  O(s_n(x)) \right] d x + O\left(N_{n}^{2-l}  |\Mcc_{l}(n)|\right)\\
 &= - \frac{2^7}{n N^2_n } \left( 2 \frac{3}{2^7} n N_n + O(n) \right)+ O\left(N_{n}^{-l-1}  |\Mcc_{l}(n)|\right) \\
 &=   -  \frac{6}{ N_n } + O \left(\frac{1}{N^2_n} \right) + O\left(N_{n}^{-l-1}  |\Mcc_{l}(n)|\right).
\end{align*}
And, since
\begin{align*}
 \int\limits_{\Qc} s_n(x) b_{11;n}(x) d x =  \int\limits_{\Qc} s_n(x) b_{22;n}(x) d x=\frac{4}{N_n} \sum_{\mu \in \Ec_n / \sim} \frac{\mu^2_1}{4}=\frac{4}{N_n} \frac{1}{4}  \frac{n N_n}{8}=\frac{n}{8},
\end{align*}
we have
\begin{align*}
&\int\limits_Q s_n(x) \tr(\Gamma_n(x)) d x \\
&=  \frac{8}{n N_n}  \int\limits_{\Qc} s_n(x) \left[ b_{11;n}(x) +  b_{22;n}(x) \right]  d x - \frac{2^7}{n N^2_n }   \int\limits_{\Qc}\frac{s_n(x)}{v_n(x)}  \left[    d^2_{1;n}(x) +  d^2_{2;n}(x)  \right] d x\\
 &=   \frac{8}{n N_n}  \int\limits_{\Qc} s_n(x) \left[ b_{11;n}(x) +  b_{22;n}(x) \right]  d x + O \left(\frac{1}{N^2_n} \right) + O\left(N_{n}^{-l-1}  |\Mcc_{l}(n)|\right)\\
 &=   \frac{8}{n N_n}  2 \frac{n}{8} +  O \left(\frac{1}{N^2_n} \right) + O\left(N_{n}^{-l-1}  |\Mcc_{l}(n)|\right)
 =   \frac{2}{ N_n}  +  O \left(\frac{1}{N^2_n} \right) + O\left(N_{n}^{-l-1}  |\Mcc_{l}(n)|\right).
\end{align*}
Moreover,
\begin{align*}
[\tr(\Gamma_n(x))]^2&= \frac{8^2}{n^2 N^2_n}  \left[ b^2_{11;n}(x) +  b^2_{22;n}(x)  + 2  b_{11;n}(x)  b_{22;n}(x) \right] \\
&\;\;+ \frac{128^2}{n^2 N^4_n v^2_n(x)} \left[   d^4_{1;n}(x) +  d^4_{2;n}(x) + 2 d^2_{1;n}(x)   d^2_{2;n}(x) \right]\\
&\;\; - 2 \frac{8}{n N_n} \frac{128}{n N^2_n v_n(x)}  \left[ b_{11;n}(x) +  b_{22;n}(x) \right] \left[   d^2_{1;n}(x) +  d^2_{2;n}(x)\right].
\end{align*}
and
\begin{align*}
\int\limits_{\Qc} [\tr(\Gamma_n(x))]^2 d x&=    \frac{8^2}{n^2 N^2_n} \int\limits_{\Qc} \left[ b^2_{11;n}(x) +  b^2_{22;n}(x)  + 2  b_{11;n}(x)  b_{22;n}(x) \right] dx \\
&\;\;+ \frac{128^2}{n^2 N^4_n}  \int\limits_{\Qc} \left[   d^4_{1;n}(x) +  d^4_{2;n}(x) + 2 d^2_{1;n}(x)   d^2_{2;n}(x) \right] [1+O(s_n(x))] dx\\
&\;\; - 2 \frac{8}{n N_n} \frac{128}{n N^2_n }  \int\limits_{\Qc} \left[ b_{11;n}(x) +  b_{22;n}(x) \right] \left[   d^2_{1;n}(x) +  d^2_{2;n}(x)\right] [1+O(s_n(x))]dx \\
&\;\;+  O\left(N_{n}^{-l-1}  |\Mcc_{l}(n)|\right),
\end{align*}
where
\begin{align*}
\int\limits_{\Qc} b^2_{11;n}(x) d x =\int\limits_{\Qc} b^2_{22;n}(x) d x = \frac{5}{4} \sum_{\mu \in \Ec / \sim } \mu_1^4 =: \frac{5}{4} n^2 N_n  M_4(n),
\end{align*}
and, since
\begin{align*}
\sum_{\mu \in  \Ec_n / \sim } \mu_1^2 \mu_2^2= \sum_{\mu \in  \Ec_n / \sim } \mu_1^2 ( \mu_2^2 \pm \mu_1^2) =  \sum_{\mu \in  \Ec_n / \sim  } \mu_1^2 ( n - \mu_1^2) = \frac{n^2  N_n}{8} - n^2 N_n  M_4(n),
\end{align*}
we have
\begin{align*}
\int\limits_{\Qc} b_{11;n}(x) b_{22;n}(x) d x = - \frac{3}{4} \sum_{\mu \in \Ec_n / \sim } \mu_1^2 \mu_2^2= - \frac{3}{4} \left( \frac{n^2  N_n}{8} - n^2 N_n  M_4(n)\right).
\end{align*}

Further, since for $i,j=1,2$,
\begin{align*}
 \sum_{\substack{ \eta, \mu \in \Ec_n / \sim \\ \eta \ne \mu} } \mu_j^2  \eta^2_i   =\sum_{\mu \in \Ec_n / \sim  }  \mu_j^2 \left( \sum_{\eta \in \Ec_n / \sim  }    \eta^2_i -  \mu_j^2 \right)=\frac{n^2 N^2_n}{8^2}- n^2 N_n M_4(n) ,
\end{align*}
 we have
\begin{align*}
\int\limits_{\Qc} d^4_{1;n}(x) d x&=\int\limits_{\Qc} d^4_{2;n}(x) d x = \frac{105}{1024} \sum_{\mu \in \Ec_n / \sim } \mu_1^4 + \frac{9}{256} \sum_{\substack{ \eta, \mu \in \Ec_n / \sim \\ \eta \ne \mu} } \eta^2_1 \mu_1^2 = O(n^2 N_n^2),
\end{align*}
\begin{align*}
\int\limits_{\Qc} d^2_{1;n}(x) d^2_{2;n}(x) d x& = \frac{25}{1024} \sum_{\mu \in \Ec_n / \sim } \mu_1^2 \mu_2^2 + \frac{9}{256} \sum_{ \substack{ \eta, \mu \in \Ec_n / \sim \\ \eta \ne \mu}  } \eta^2_2 \mu_1^2 = O(n^2 N_n^2),
\end{align*}
and similarly, for $i,j=1,2$,
\begin{align*}
\int\limits_{\Qc} b_{ii;n}(x) d^2_{j;n}(x) d x& = O(n^2 N_n^2),
\end{align*}
that implies
\begin{align*}
\int\limits_{\Qc} [\tr(\Gamma_n(x))]^2 d x&=    \frac{8^2}{n^2 N^2_n} \left[2 \frac{5}{4} n^2 N_n  M_4(n) - 2 \frac{3}{4} \left( \frac{n^2  N_n}{8} - n^2 N_n  M_4(n)\right)   \right] \\
&\;\;+O\left(\frac{1}{N^2_n} \right)+  O\left(N_{n}^{-l-1}  |\Mcc_{l}(n)|\right)\\
& =    \frac{8^2}{n^2 N^2_n} \left[2 \frac{5}{4} n^2 N_n  M_4(n) - 2 \frac{3}{4} \left( \frac{n^2  N_n}{8} - n^2 N_n  M_4(n)\right)   \right] \\
&=\frac{2^2}{N_n} \left(2^6 M_4(n)-3 \right)+O\left(\frac{1}{N^2_n} \right)+  O\left( N_{n}^{-l-1}  |\Mcc_{l}(n)|\right).
\end{align*}
\noindent Finally, since for a symmetric matrix $$A= \left( \begin{matrix} a&b \\ b&c \end{matrix}\right),$$
$\tr(A^2)=a^2+2b^2+c^2$, one has
\begin{align*}
\tr(\Gamma^2_n(x))& = \left[ \frac{8}{n N_n}   b_{11;n}(x) - \frac{128}{n N^2_n v_n(x)}   d^2_{1;n}(x) \right]^2 +  \left[ \frac{8}{n N_n}   b_{22;n}(x) - \frac{128}{n N^2_n v_n(x)}   d^2_{2;n}(x) \right]^2 \\
&\;\;+ 2 \left[ \frac{8}{n N_n}   b_{12;n}(x) - \frac{128}{n N^2_n v_n(x)}   d_{1;n}(x) d_{2;n}(x) \right]^2
\end{align*}
and, observing that
\begin{align*}
\int\limits_{\Qc}    b^2_{12;n}(x) d x = \frac{1}{4}  \sum_{\mu \in \Ec_n / \sim }  \mu_1^2 \mu_2^2 =  \frac{1}{4} \left( \frac{n^2 N_n}{8}-n^2 N_n M_4(n)\right),
\end{align*}
\begin{align*}
\int\limits_{\Qc}    b_{12;n}(x) d_{1;n}(x) d_{2;n}(x) d x =O(n^2 N_n),
\end{align*}
we have
\begin{align*}
&\int\limits_{\Qc} \tr(\Gamma^2_n(x)) dx = 2 \frac{8^2}{n^2 N^2_n} \left[ \int\limits_{\Qc} b^2_{11;n}(x) dx+ \int\limits_{\Qc} b^2_{12;n}(x) dx\right]+O\left(\frac{1}{N^2_n} \right)+ O\left(n^{-2} N_n^{-l-2}  |\Mcc_{l}(n)|\right)\\
&= 2 \frac{8^2}{n^2 N^2_n} \left[ \frac{5}{4} n^2 N_n M_4(n) + \frac{1}{4} \left( \frac{n^2 N_n}{8}-n^2 N_n M_4(n)\right) \right]
 +O\left(\frac{1}{N^2_n} \right)+ O\left(n^{-2} N_n^{-l-2}  |\Mcc_{l}(n)|\right) \\
&=\frac{2^2}{N_n} \left[1+2^5 M_4(n) \right] +O\left(\frac{1}{N^2_n} \right)+ O\left(n^{-1-1/2} N_n^{-l-3}  |\Mcc_{l}(n)|\right).
\end{align*}

To prove parts \ref{s^3} and \ref{trace_cubed} of Lemma \ref{21:33 Lon} we evaluate
\begin{align*}
\int_{\Qc} s^3_n(x) dx= \frac{4^3}{N_n^3} \sum_{\mu \in \Ec_n / \sim } \left( - \frac 3 2\right) = O\left( \frac{1}{N^2_n}\right),
\end{align*}
and
\begin{align*}
\int\limits_{\Qc} [ \tr( \Gamma_n(x)) ]^3 d x = O\left( \frac{1}{n^3 N^3_n} \sum_{\mu \in \Ec_n / \sim } \mu_1^6\right) =  O\left( \frac{1}{N^2_n}\right).
\end{align*}

\end{proof}


\begin{thebibliography}{99}

\bibitem{AW}
Aza\"is, J.-M., Wschebor, W.  \emph{Level Sets and Extrema of Random Processes and Fields}. John Wiley \& Sons Inc., Hoboken, NJ, 2009.


\bibitem{Berry 1977}
Berry, M. V. {\em Regular and irregular semiclassical
wavefunctions}. J. Phys. A  {\bf 10}, no. 12, (1977),
2083--2091.

\bibitem{Berry 2002}
Berry, M. V.  {\em Statistics of nodal lines and points in chaotic
quantum billiards: perimeter corrections, fluctuations, curvature}.
J. Phys. A {\bf 35} (2002), 3025--3038.

\bibitem{BB}
Bombieri, E., Bourgain, J. \emph{A problem on sums of two squares}. International Math. Res. Notices {\bf 11} (2015), 3343--3407.


\bibitem{BMW}
Benatar, J., Marinucci, D., Wigman, I. {\em Planck-scale distribution of nodal length of arithmetic random waves}.
J. d'Anal. Math., to appear. Available online \url{https://arxiv.org/abs/1710.06153}.

\bibitem{Bruning}
Br\"{u}ning, J. {\em \"{U}ber Knoten Eigenfunktionen des
Laplace-Beltrami Operators}. Math. Z. {\bf 158} (1978), 15--21.

\bibitem{Bruning-Gromes}
Br\"{u}ning, J., Gromes, D. {\em \"{U}ber die L\"{a}nge der
Knotenlinien schwingender Membranen}. Math. Z. {\bf 124} (1972), 79--82.

\bibitem{Cann}
Cann, J. {\em Counting Nodal Components of Boundary-adapted Arithmetic Random Waves}.
PhD thesis submitted at King's College London (2019).


\bibitem{Cheng}
Cheng, S. Y.  {\em Eigenfunctions and nodal sets}. Comm. Math. Helv. {\bf 51} (1976), 43--55.


\bibitem{Cil}
Cilleruelo, J. \emph{The distribution of the lattice points on circles}, Journal of Number Theory {\bf 43}, no. 2  (1993), 198--202.


\bibitem{DF}
Donnelly, H., Fefferman, C., {\em Nodal sets of eigenfunctions on Riemannian manifolds}.
Inventiones Mathematicae {\bf 93} (1988), 161--183.

\bibitem{GL}
Gnutzmann, S., Lois, S. {\em Remarks on nodal volume statistics for regular and chaotic wave functions in various dimensions}.
Philosophical Transactions of the Royal Society A: Mathematical, Physical and Engineering Sciences {\bf 372} (2014),
\url{http://doi.org/10.1098/rsta.2012.0521}

\bibitem{KKW}
Krishnapur, M., Kurlberg, P., Wigman, I.
{\em Nodal length fluctuations for arithmetic random waves}. Ann. Math. {\bf 177} no. 2 (2013), 699--737.

\bibitem{KuWi} Kurlberg, P., Wigman, I.
{\em On probability measures arising from lattice points on circles}.
Math. Ann. {\bf 367}, no. 3-4 (2017), 1057--1098.

\bibitem{KratzLeon}
Kratz, M. F., Le\'{o}n, J. R. {\em Central limit theorems for level functionals of stationary Gaussian processes and fields}. J. Theoret. Probab. {\bf 14} no. 3 (2001), 639--672.

\bibitem{Landau}
Landau, E.
{\em Uber die Einteilung der positiven Zahlen nach vier Klassen nach der
Mindestzahl der zu ihrer addition Zusammensetzung erforderlichen Quadrate}.
Arch. Math, und Phys. III, (1908).


\bibitem{Log Mal}
Logunov, A., Malinnikova, E. {\em Nodal sets of Laplace eigenfunctions: estimates of the Hausdorff measure in dimensions two and three}. 50 Years with Hardy spaces, Oper. Theory Adv. Appl. {\bf 261} (2018), 333--344.

\bibitem{Log Lower} Logunov, A. {\em Nodal sets of Laplace eigenfunctions: proof of Nadirashvili's conjecture and of the lower bound in Yau's conjecture}. Ann. of Math. (2) {\bf 187}  no. 1 (2018), 241--262.

\bibitem{Log Upper}
Logunov, A. {\em Nodal sets of Laplace eigenfunctions: polynomial upper estimates of the Hausdorff measure}. Ann. of Math. (2) {\bf 187} no. 1 (2018), 221--239.

\bibitem{LH1}
Longuet-Higgins, M. S. {\em The statistical analysis of a random, moving surface}. Philos. Trans. Roy. Soc. London Ser. A. {\bf 249} (1957), 321--387.


\bibitem{LH2}
Longuet-Higgins, M. S. {\em Statistical properties of an isotropic random surface}. Philos. Trans. Roy. Soc. London. Ser. A. {\bf 250} (1957), 157--174.

\bibitem{MRW} Marinucci, D., Peccati, G., Rossi, M., Wigman, I.
{\em Non-universality of nodal length distribution for arithmetic random waves}.
Geom. and Func. Analysis. (GAFA) {\bf 26}, no. 3 (2016), 926--960.


\bibitem{RW08} Rudnick, Z., Wigman, I.
{\em On the volume of nodal sets for eigenfunctions of the Laplacian on the torus}. Ann. Henri Poincar\'e 9, no. 1  (2008), 109--130.

\bibitem{RW2014} Rudnick, Z., Wigman, I. {\em Nodal intersections for random eigenfunctions on the torus}.
Amer. J. Math. {\bf 138} no. 6 (2016), 1605--1644.

\bibitem{Roy}
Royen, T. {\em A simple proof of the Gaussian correlation conjecture extended to multivariate gamma distributions}.
arXiv preprint arXiv:1408.1028 (2014).

\bibitem{wig} Wigman, I. {\em Fluctuations of the nodal length of random spherical harmonics}.
  Communications in Mathematical Physics, {\bf 298} (3) (2010), 787--831. Erratum published Comm. Math. Phys. {\bf 309} no. 1, (2012), 293--294.

\bibitem{Yau} Yau, S.T.
{\em Survey on partial differential equations in differential geometry}. Seminar on Differential Geometry, Annals of Mathematics Studies, {\bf 102} (1982), 3--71 Princeton University Press, Princeton, N.J.



\end{thebibliography}
\end{document}